\newtheorem{theorem}{Theorem}
\newtheorem{lemma}{Lemma}
\newtheorem{proposition}{Proposition}
\newtheorem{remark}{Remark}
\newtheorem{definition}{Definition}
\newtheorem{corollary}{Corollary}
\newcommand{\q}{\quad}
\newcommand{\qq}{\quad\quad}
\newcommand{\qqq}{\quad\quad\quad}
\newcommand{\nf}{\infty}
\newcommand{\al}{\alpha}
\newcommand{\be}{\beta}
\newcommand{\ga}{\gamma}
\newcommand{\Ga}{\Gamma}
\newcommand{\de}{\delta}
\newcommand{\De}{\Delta}
\newcommand{\ve}{\varepsilon}
\newcommand{\la}{\lambda}
\newcommand{\vp}{\varphi}
\newcommand{\om}{\omega}
\newcommand{\Om}{\Omega}
\newcommand{\rn}{{\mathbf R}^n}
\newcommand{\zp}{\mathbf Z^+}
\newcommand{\lp}{L^{p}}
\newcommand{\wlp}{L^{p,\infty}}
\newcommand{\lprn}{L^{p}(\rn)}
\newcommand{\lo}{L^{1}}
\newcommand{\li}{L^{\infty}}
\newcommand{\cs}{\mathcal S}
\newcommand{\cm}{\mathcal M}
\newcommand{\lab}{\label}
\newcommand{\loc}{\textup{loc}}
\newcommand{\intrn}{\int_{\rn}}
\newcommand{\f}{\frac}
\newcommand{\p}{\partial}
\newcommand{\wt}{\widetilde}
\newcommand{\wh}{\widehat}
\newcommand{\cf}{\mathcal F}
\newcommand{\tf}{\tfrac}
\newcommand{\zzz}{\mathbf Z}
\newcommand{\rar}{\rightarrow}
\newcommand{\mcs}{\mathcal{S}}
\newcommand{\iy}{\infty}
\newcommand{\hi}{H^{p,\iy}}
\newcommand{\bt}{\begin{theorem}}
\newcommand{\et}{\end{theorem}}
\newcommand{\dis}{\displaystyle}
\newcommand{\ep}{\epsilon}
\newcommand{\vph}{\vp}
\newcommand{\cc}{\mathbf{C}}
\newcommand{\eset}{\emptyset}
\begin{document}

\title
[Square function characterization of weak Hardy spaces]
{Square function characterization of weak Hardy spaces}

\author{Danqing He}

\address{
Department of Mathematics\\
University of Missouri\\
Columbia, MO 65211, USA}

\email{dhd27@mail.missouri.edu}


\date{\today}


\keywords{Hardy and weak Hardy spaces,  weak $L^p$ spaces, interpolation}

\begin{abstract} 
We obtain a new square function characterization of the weak Hardy space $\hi$ for all $p\in(0,\iy)$. This space consists of all tempered distributions whose smooth maximal function lies in weak $L^p$.    Our proof  is based on 
 interpolation between $H^p$ spaces. 
 The main difficulty we overcome  is the lack of a good dense subspace of 
$\hi$ which forces us to work with general $\hi$ distributions.
\end{abstract}

\maketitle

\section{Introduction}

In this work we extend Peetre's \cite{P}  characterization of Hardy spaces in terms of the Littlewood-Paley square function   to    the setting of weak Hardy  spaces, $\hi$.

Hardy spaces first appeared in the work of
Hardy \cite{h} in 1914. Their study was based on complex methods and their theory was one-dimensional.
Burkholder, Gundy and Silverstein \cite{bgs} proved that    a complex function  $F=u+iv$ on the upper half space
lies in $ H^p$  if and only if  the nontangential maximal function of
$u$ lies in $L^p(\mathbf R)$. This result inspired the
extension of the theory of Hardy spaces to higher dimensions, in particular the
celebrated work of  Fefferman and  Stein \cite{FS} on this topic.
A deep structural characterization of these spaces was given by
Coifman \cite{C} and Latter \cite{L},  in terms of their
atomic decomposition. The books of Lu \cite{lu},  Uchiyama \cite{U}, and Triebel \cite{triebel1} provide
comprehensive expositions on the theory of Hardy spaces on Euclidean spaces.
The theory of Hardy spaces has proved to be so rich and fruitful that has been extended to 
spaces of homogeneous type; we refer to the works of 
Coifman, and Weiss \cite{CW},
Mac\'{i}as, and Segovia \cite{MS},
Duong, and Yan \cite{DY},
Han, M\"{u}ller and Yang \cite{HMY} and Hu, Yang, Zhou \cite{HYZ} for   results and applications in this setting.

The Hardy-Lorentz spaces $H^{p,q}$, $0<p<\nf$, $0<q\le \nf$ are defined as the spaces of all distributions whose smooth maximal function lies in the Lorentz space $L^{p,q}$. These spaces were studied by
Fefferman and Soria \cite{FSo}, Alvarez \cite{A}, and  Abu-Shammala and Torchinsky \cite{AS-T}.   Fefferman, Riviere and Sagher \cite{FRS} showed that the  $H^{p,q}$ spaces are intermediate spaces of Hardy spaces in the
$K$-interpolation method.  The interpolation result in \cite{FRS} was only proved for Schwartz functions, which is not a dense
subspace of $H^{p,q}$  when $q=\nf$, a fact also  observed in  \cite{FSo}.

In this article we focus on the case $q=\nf$ which presents difficulties due to the lack of a good dense subspace of it. We prove an interpolation theorem for weak Hardy spaces as intermediate spaces of  Hardy spaces and we work with general tempered distributions and the grand maximal function to accomplish this; for this reason our proof looks unavoidably complicated. 
As an application we obtain a new Littlewood-Paley square function characterization of weak Hardy spaces. This shows that $H^{p,\nf}$ is a natural extension of   $L^{p,\nf}$  when $p\le 1$, just like 
 $H^p$ is a natural extension of $L^p$ for $p\le 1$, in view of the 
 Littlewood-Paley theorem on    weak $L^p$.   This characterization reveals the orthogonality of weak $L^p$ spaces for $p<1$ (Corollary \ref{last} ),
which is crucial in the theory of multilinear paraproduct.

We now state our main result. We denote by 
$
\De_j (f)   = \Psi_{2^{-j}}*f
$
the Littlewood-Paley operator of a distribution $f$, where $\Psi_t(x) = t^{-n}\Psi(x/t)$. 

\begin{theorem}\lab{sq}
Let $\Psi$ be a 
radial Schwartz function   on $\rn$ whose Fourier 
transform is nonnegative, supported in $1-\f17\le |\xi|\le 2$, and satisfies $\sum_{j\in\zzz} \wh{\Psi}(2^{-j}\xi)=1$ when $\xi\neq 0$. Let 
$\De_j$ be the Littlewood--Paley 
operators associated with $\Psi$ and let $0<p <\iy$. 
Then there exists a constant 
$C=C_{n,p,\Psi}$ such that for all $f\in H^{p,\nf}(\rn)$ we have 
\begin{equation}\lab{6.4.6.1st-dire}
\Big\|\Big(\sum_{j\in\zzz} |\De_j(f)|^2\Big)^{\!\f12} \Big\|_{L^{p,\nf}}\le 
C \big\|f\big\|_{H^{p,\nf}}\, .
\end{equation}
Conversely, suppose that a  tempered distribution $f$   satisfies 
\begin{equation}\lab{6.4.6.2nd-dire099887}
\Big\|\Big(\sum_{j\in\zzz} |\De_j(f)|^2\Big)^{\!\f12} \Big\|_{L^{p,\nf}}<\nf \, .
\end{equation}
Then there exists a unique polynomial $Q$ such that $f-Q$ 
lies in  
 $H^{p,\nf}$ and satisfies 
\begin{equation}\lab{6.4.6.2nd-dire}
\f{1}{C}\big\|f-Q\big\|_{H^{p,\nf}}\le 
\Big\|\Big(\sum_{j\in\zzz} |\De_j(f)|^2\Big)^{\!\f12} \Big\|_{L^{p,\nf}}
\, .
\end{equation}  
\end{theorem}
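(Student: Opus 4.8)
The plan is to deduce the square function characterization of $\hi$ from an interpolation theorem: $\hi$ should arise as a real-interpolation intermediate space between $H^{p_0}$ and $H^{p_1}$ for $0<p_0<p<p_1<\iy$, and the classical Littlewood--Paley square function estimate on $H^{p}$ (Peetre's theorem, \cite{P}) is bounded $H^{p_i}\to L^{p_i}$ together with a suitable inverse estimate. First I would establish the forward inequality \eqref{6.4.6.1st-dire}. The operator $f\mapsto (\sum_j|\De_j(f)|^2)^{1/2}$ is bounded from $H^{p_i}$ to $L^{p_i}$ for $i=0,1$ by Peetre's characterization; the subtle point is that this is a sublinear (in fact quasi-subadditive) vector-valued operator, not linear, so one cannot quote the linear Calder\'on--Mityagin description of the $K$-functional directly. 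Instead I would argue as follows: for $f\in\hi$, at each height $\la>0$ perform a Calder\'on--Zygmund-type decomposition of $f$ at level $\la$ adapted to the grand maximal function, writing $f=g_\la+b_\la$ with $\|g_\la\|_{H^{p_1}}\lesssim \la^{1-p/p_1}\|f\|_{\hi}^{p/p_1}$ and $\|b_\la\|_{H^{p_0}}\lesssim \la^{1-p/p_0}\|f\|_{\hi}^{p/p_0}$ (this is exactly the estimate one needs to show $\hi\subset (H^{p_0},H^{p_1})_{\theta,\infty}$ with the right $\theta$). Then $\{S f>2\la\}\subset\{S g_\la>\la\}\cup\{S b_\la>\la\}$, and Chebyshev applied with exponent $p_1$ on the first set and $p_0$ on the second, combined with the $H^{p_i}\to L^{p_i}$ bounds, yields $\la^p|\{Sf>2\la\}|\lesssim\|f\|_{\hi}^p$; taking the supremum over $\la$ gives \eqref{6.4.6.1st-dire}. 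The one point requiring care here is that the decomposition $f=g_\la+b_\la$ must be carried out for a genuine $\hi$ distribution with no recourse to a dense subspace — this is where the ``grand maximal function'' machinery flagged in the introduction enters, and I expect this to be the main obstacle: one must produce, for a general tempered distribution whose grand maximal function is in $L^{p,\infty}$, an atomic/molecular-type splitting with good norm control in two different $H^{p_i}$ quasinorms simultaneously.

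Next I would prove the converse. Suppose $f$ is a tempered distribution with \eqref{6.4.6.2nd-dire099887}. Set $g_N=\sum_{|j|\le N}\De_j(f)$; since $\wh{\Psi}$ is supported in an annulus and $\sum_j\wh{\Psi}(2^{-j}\xi)=1$ off the origin, the partial sums $g_N$ are honest functions (Schwartz multipliers applied to $f$) and one shows, using the square-function hypothesis together with the vector-valued Littlewood--Paley inequality on $L^{p,\infty}$ (which follows by interpolation from its $L^{p_i}$ versions, $p_0<p<p_1$), that $\{g_N\}$ is Cauchy in $\hi$ — more precisely, that $\|g_N-g_M\|_{\hi}\lesssim \|(\sum_{M<|j|\le N}|\De_j f|^2)^{1/2}\|_{L^{p,\infty}}\to 0$. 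To see this last estimate I would again run the interpolation argument backwards: the map $(f_j)_j\mapsto \sum_j\De_j^{(2)}(f_j)$, where $\De_j^{(2)}$ is built from a slightly fatter multiplier equal to $1$ on the support of $\wh\Psi(2^{-j}\cdot)$, is bounded from $L^{p_i}(\ell^2)\to H^{p_i}$, hence from $L^{p,\infty}(\ell^2)\to\hi$ by a decomposition-at-each-level argument identical in spirit to the one used for the forward direction. Then $g=\lim_N g_N$ exists in $\hi$ with $\|g\|_{\hi}\lesssim \|(\sum_j|\De_j f|^2)^{1/2}\|_{L^{p,\infty}}$. Finally, $\De_j(f-g)=0$ for every $j$, because $\De_j g_N=\De_j f$ once $N\ge|j|+1$ (the annuli overlap only with neighbors); hence $\wh{f-g}$ is supported at the origin, so $f-g=Q$ is a polynomial, and uniqueness of $Q$ is immediate since a nonzero polynomial is not in $\hi$. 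Setting $f-Q=g$ gives \eqref{6.4.6.2nd-dire}.

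The only genuinely delicate piece, then, is the two-sided $\hi$–$L^{p,\infty}(\ell^2)$ transfer, i.e.\ proving both $\|Sf\|_{L^{p,\infty}}\lesssim\|f\|_{\hi}$ and $\|\sum_j\De_j^{(2)}f_j\|_{\hi}\lesssim\|(f_j)\|_{L^{p,\infty}(\ell^2)}$ without a dense class, and I would organize the paper so that both are corollaries of a single interpolation statement: if a (quasi-)sublinear vector-valued operator $T$ is bounded $H^{p_0}\to L^{p_0}(\ell^2)$ and $H^{p_1}\to L^{p_1}(\ell^2)$ (resp.\ with the roles of $H$ and $L$ swapped), then it is bounded on the corresponding weak endpoint spaces at $p$, $p_0<p<p_1$. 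The proof of that statement is precisely the Calder\'on--Zygmund splitting of an arbitrary $\hi$ (or $L^{p,\infty}$) element at each dyadic height, with the grand-maximal-function control doing the work that density of Schwartz functions does in \cite{FRS}. Once this is in hand, \eqref{6.4.6.1st-dire}, \eqref{6.4.6.2nd-dire099887}$\Rightarrow$convergence, and \eqref{6.4.6.2nd-dire} all follow as above, and Corollary \ref{last} on the orthogonality of $L^{p,\infty}$ for $p<1$ is read off by taking $f\in L^{p,\infty}\subset\hi$.
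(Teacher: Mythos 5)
Your forward-direction argument and your interpolation framework (the ``two-sided transfer'' at the end) are essentially the paper's: the paper's Theorem \ref{In1} is exactly the Calder\'on--Zygmund-at-every-level interpolation you describe, applied to the truncated square function $S_M(f)=(\sum_{|j|\le M}|\De_j f|^2)^{1/2}$ and then to $M\to\infty$ via Fatou.

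For the converse, however, your proof has a genuine gap at the point where you assert that
\[
\big\|g_N-g_M\big\|_{\hi}\lesssim
\Big\|\Big(\sum_{M<|j|\le N}|\De_j f|^2\Big)^{1/2}\Big\|_{L^{p,\infty}}\longrightarrow 0 .
\]
The inequality itself is fine (it follows from the singular-integral/interpolation estimate you have in hand), but the claimed convergence of the right-hand side to zero does \emph{not} follow from $\|(\sum_j|\De_j f|^2)^{1/2}\|_{L^{p,\infty}}<\infty$. Unlike $L^p$, the weak space $L^{p,\infty}$ has no monotone or dominated convergence theorem: if $F=(\sum_j |h_j|^2)^{1/2}$ lies in $L^{p,\infty}$, the tails $(\sum_{|j|>M}|h_j|^2)^{1/2}$ need not tend to $0$ in $L^{p,\infty}$. (Take $h_j = 2^{j/p}\chi_{E_j}$ with disjoint $E_j$, $|E_j|=2^{-j}$; the full sum is the canonical weak-$L^p$ function and every tail has comparable quasinorm.) This is precisely the obstruction the paper calls out when it says that the method used for $H^p$ cannot be applied. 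So the partial sums $g_N$ are \emph{not} known to be Cauchy in $\hi$, and your construction of the limit $g$ breaks down.

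The paper's way around this is to use your uniform bound $\|g_N\|_{\hi}\le C\,\|(\sum_j|\De_j f|^2)^{1/2}\|_{L^{p,\infty}}$ (which you do establish) together with a compactness argument rather than a Cauchy argument: the $\hi$-bound on $g_N$ controls the pairings $|\langle g_N,\vp\rangle|$ via Theorem \ref{dp}(a), so by weak$^*$ compactness (separability of $\cs$) one extracts a subsequence of $g_N$ converging in $\cs'$ to some $g$, and then Fatou/Proposition \ref{con1} gives $\|g\|_{\hi}\le C\,\|(\sum_j|\De_j f|^2)^{1/2}\|_{L^{p,\infty}}$. Since $g_N\to f$ in $\cs'/\mathcal P$, the limit satisfies $g=f-Q$ for a unique polynomial $Q$. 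Replacing your ``Cauchy in $\hi$'' step by this ``uniformly bounded, extract a weak$^*$ limit'' step repairs the proof; the rest of your outline (the $\eta$ multiplier with $\wh\eta=1$ on $\operatorname{supp}\wh\Psi$, the vector-valued maximal control of $\sup_t|\vp_t*\De_j f|$, and the uniqueness of $Q$) matches the paper.
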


The proof of this theorem is based on Theorem \ref{In1} discussed in Section \ref{Int}.

\section{Background}
We introduce the weak Hardy space $H^{p,\infty} $ via the Poisson  maximal function, following the classical definition of the Hardy space. So, we begin our study by listing a result containing the equivalence of quasinorms of several kinds of maximal functions, which also appear in the theory of Hardy spaces.

 We denote by $\ell^2$ the space $\ell^2(\zzz)$ of all square-integrable sequences and by $\ell^2(L)$ the finite-dimensional space of all sequences of 
 length $L\in \zp$ with the $\ell^2 $ norm.
We say that a  sequence of distributions $\{f_j\}_j$ lies in $\mathcal S'(\rn, \ell^2)$ if there are constants $C,M>0$ such that for every $\vp \in \mathcal S(\rn)$ we have 
$$
\big\|\{ \langle f_j, \vp \rangle\}_j \big\|_{\ell^2}= \Big(\sum_{j} |\langle f_j, \vp \rangle |^2 \Big)^{1/2} \le C\, \sum_{|\al|, |\be|\le M} 
 \sup_{y\in \rn } |y^\be \p^\al \vp(y)|.
$$

A  sequence of distributions $\vec f=\{f_j\}_j$ in $\mathcal S'(\rn, \ell^2)$ is called  bounded  if 
$$
\big\|\{   \vp*f_j \}_j \big\|_{\ell^2} =\Big(\sum_{j} |  \vp* f_j    |^2 \Big)^{1/2}  \in L^\nf(\rn)
 $$
for every $\vp$ in $ \mathcal S(\rn)$.

Let $a,b>0$ and let $\Phi$ be a Schwartz function on $\rn$. 
 
\begin{definition}\lab{d6.4.11}
For a sequence $\vec f=\{f_j\}_{j\in\zzz}$ of tempered distributions
on $\rn$ 
we define the   smooth maximal function of  $\vec f$ with 
respect to  $\Phi$  as 
$$
{M} (\vec f\, ;\Phi)(x) = \sup_{t>0} \big\|\{(\Phi_t
*f_j)(x)\}_j\big\|_{\ell^2}
\, . 
$$
We define the  nontangential maximal function 
 with aperture $a$   of $\vec f$ with respect to  $\Phi$ as 
$$
{M}_a^*(\vec f\, ;\Phi)(x) = \sup_{t>0}
\sup_{\substack{y\in\rn\\ |y-x|\le at}} 
\big\|\{(\Phi_t *f_j)(y)\}_j\big\|_{\ell^2} \, . 
$$
We also define the auxiliary maximal function
$$
{M}_b^{**}(\vec f\, ;\Phi)(x) = \sup_{t>0} \sup_{y\in\rn}
\f{\big\|\{(\Phi_t *f_j)(x-y)\}_j\big\|_{\ell^2}}
{(1+t^{-1}|y|)^b}\, . 
$$

For a fixed positive integer $N$ we  
define the grand maximal function of $\vec f$   as
\begin{equation}\lab{6.4.4.defgrandmasxasdf}
{{\mathcal M}}_N(\vec f\, )   = \sup_{\vp  \in 
\cf_N} {{M}}_1^*(\vec f\, ;\vp)\, , 
\end{equation}
where 
\begin{equation}\lab{d6.4.1defcfN}
\cf_N =\Big\{\vp\in\cs(\rn):\,\, 
\mathfrak{N}_N (\vp)  \le 1\Big\}  \, ,
\end{equation}
and 
$$
\displaystyle\mathfrak{N}_N(\vp)=\int_{\rn}(1+|x|)^N\sum_{|\al|\le N+1}|\p^{\al}\vp(x)|\, dx .
$$
\end{definition}

If the function  $\Phi$ is not assumed
to be Schwartz but say $\Phi $ is 
the Poisson kernel, then the maximal functions  
${M}(\vec f\, ;\Phi)$,  
${M}_a^*(\vec f\, ;\Phi)$,
and 
${M}_b^{**}(\vec  f\, ;\Phi)$ 
are well defined    for    sequences of bounded tempered distributions
$\vec f=
\{f_j\}_j$. 

We note that  the following  simple inequalities 
\begin{equation}\lab{6.4.4vv}
{M}(\vec f\, ;\Phi) \le {M}_a^*(\vec f\, ;\Phi) \le
(1+a)^b {M}_b^{**}(\vec f\, ;\Phi) 
\end{equation}
are valid. 
We now define the vector-valued Hardy space $H^{p,\nf}(\rn,\ell^2)$.

\begin{definition}\lab{d6.4.12}
Let  $\vec f= \{f_j\}_j$ be a sequence  of
bounded tempered distributions 
  on $\rn$ and let $0<p<\nf$.  We say that 
$\vec f$ lies in the vector-valued weak Hardy space $H^{p,\nf}(\rn, \ell^2)$
 vector-valued Hardy space  if the 
 Poisson maximal  function 
\begin{equation*}
{M} (\vec f\, ;P)(x)= \sup_{t>0}   \big\|\{(P_t
*f_j)(x)\}_j\big\|_{\ell^2}
\end{equation*}
lies in $L^{p,\nf}(\rn)$. If this is the case, we set
$$
\big\|\vec f\, \big\|_{H^{p,\nf}(\rn, \ell^2)} =
 \big\|{ M} (\vec f\, ;P)\big\|_{L^{p,\nf}(\rn)}= 
\Big\|\sup_{\ve>0}\Big(\sum_{j}  | P_\ve*f_j|^2\Big)^{\!\f12}
\Big\|_{L^{p,\nf}(\rn)} .
$$  
\end{definition}

The next theorem provides a characterization of $H^{p,\nf}$ in terms of different maximal functions. Its proof is a copy of that for $H^p$ cases in \cite{G2}.

\begin{theorem}\lab{t6.4.13}  
Let $0<p<\nf$. Then the following statements are valid:\\
(a) There exists  a  Schwartz function $\Phi$ with integral $ 1$ 
and a  constant  $C_1 $  such that 
\begin{equation}\lab{vv}
\big\| {M}(\vec f\, ;\Phi) \big\|_{L^{p,\nf}(\rn,\ell^2)}\le  C_1
\big\|\vec f\, \big\|_{H^{p,\nf}(\rn,\ell^2)}
\end{equation}
for every sequence  $\vec f=\{f_j\}_j$ of tempered distributions. \\
(b) For every $a>0$ and
$\Phi$ in  $\cs(\rn)$   there exists a constant  $C_2( n, p,a,\Phi)$
such that 
\begin{equation}\lab{6.4.6vv}
\big\| {M}^*_a(\vec f\, ;\Phi) \big\|_{L^{p,\nf}(\rn, \ell^2)}\le 
C_2( n, p,a,\Phi) \big\| {M} (\vec f\, ;\Phi) \big\|_{L^{p,\nf}(\rn,
\ell^2)}
\end{equation}
for every sequence  $\vec f=\{f_j\}_j$ of tempered distributions. \\
(c) For every $a >0$, $b>n/p$, and
$\Phi$ in  $\cs(\rn)$   there exists a constant 
$C_3(n, p,a, b,\Phi )$ such that 
\begin{equation}\lab{6.4.7vv}
\big\| {M}^{**}_b(\vec f\, ;\Phi) \big\|_{L^{p,\nf}(\rn, \ell^2)}\le 
C_3(n, p,a, b,\Phi ) \big\|  {M}^*_a(\vec f\, ;\Phi)
\big\|_{L^{p,\nf}(\rn, \ell^2)}
\end{equation}
for every sequence  $\vec f=\{f_j\}_j$ of tempered distributions. \\
(d) For every $b >0$  and
$\Phi$ in  $\cs(\rn)$  with $\intrn\Phi(x)\, dx\neq 0$ there exists a  
 constant  $C_4(b, \Phi)$ such that if $N=[ b ]+1$ we have 
\begin{equation}\lab{6.4.8vv}
\big\|{{\mathcal{M}}}_{N}(\vec f \, ) \big\|_{L^{p,\nf}(\rn,\ell^2)}\le
C_4(b, \Phi) \big\| { M}^{**}_b(\vec f\, ;\Phi)
\big\|_{L^{p,\nf}(\rn,\ell^2)}
\end{equation}
for every sequence  $\vec f=\{f_j\}_j$ of tempered distributions.\\
(e) For every positive integer $N $ there exists  a  constant  $C_5(n,N)$
such that every sequence  
$\vec f=\{f_j\}_j$ of tempered distributions 
that satisfies $\big\| {{\mathcal{M}}}_{N}(\vec f\, )
\big\|_{L^{p,\nf}(\rn,\ell^2)}$ $<\nf$ is    
bounded and   satisfies
\begin{equation}\lab{6.4.9vv}
\big\|\vec f\, \big\|_{H^{p,\nf}(\rn,\ell^2)}  \le
C_5(n,N) \big\| {{\mathcal{M}}}_{N}(\vec f \, )
\big\|_{L^{p,\nf}(\rn,\ell^2)} \, , 
\end{equation}
that is,  it lies in the Hardy space $H^{p,\nf}(\rn,\ell^2)$. 
\end{theorem}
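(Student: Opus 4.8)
The plan is to establish (a)--(e) one at a time; together with the elementary inequalities \eqref{6.4.4vv} they will show that all the maximal functions occurring in (a)--(e) have comparable $\wlp$ quasinorms, and in particular that a sequence with $\mathcal M_N(\vec f\,)\in\wlp$ is bounded and lies in $H^{p,\nf}(\rn,\ell^2)$. I will follow the proof of the $H^p$ case in \cite{G2} with two systematic adjustments. First, every convolution estimate will be carried out with the $\ell^2$ norm pulled inside the integral by Minkowski's inequality, so the vector-valued character of $\vec f$ plays no role. Second --- and this is the only genuine change --- since the Hardy--Littlewood maximal operator $\mathcal M$ is not bounded on $\lp$ when $p\le1$, I will arrange each pointwise step in the sharpened Fefferman--Stein form
\[
\big(\text{larger maximal function}\big)(x)\ \le\ C\,\big[\mathcal M\big((\text{smaller maximal function})^{s}\big)(x)\big]^{1/s},
\]
with $0<s<p$ chosen so that $p/s>1$; then $\mathcal M$ is bounded on $L^{p/s,\nf}$, and since $\big\||g|^{s}\big\|_{L^{p/s,\nf}}=\big\|g\big\|_{\wlp}^{s}$, taking quasinorms recovers the stated $\wlp$ inequality.

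For (b) and (c) the plan is to prove, for a fixed $\Phi\in\cs(\rn)$, the pointwise estimates $M_a^*(\vec f\,;\Phi)(x)\le C\,[\mathcal M((M(\vec f\,;\Phi))^{s})(x)]^{1/s}$ for every $s>0$, and $M_b^{**}(\vec f\,;\Phi)(x)\le C\,[\mathcal M((M_a^*(\vec f\,;\Phi))^{s})(x)]^{1/s}$ for every $s>n/b$; the hypothesis $b>n/p$ is precisely what makes the choice $s\in(n/b,p)$ possible in the second one. Both follow the standard route: from $\Phi_t=\Phi_{t/2}*\Phi_{t/2}$ one gets the gradient bound $\big\|\{\nabla(\Phi_t*f_j)(z)\}_j\big\|_{\ell^2}\le Ct^{-1}(1+t^{-1}|x-z|)^{b}M_b^{**}(\vec f\,;\Phi)(x)$, deduces that $z\mapsto\big\|\{(\Phi_t*f_j)(z)\}_j\big\|_{\ell^2}$ changes by at most a fixed fraction of its own size over a ball of radius a small multiple of $t$, and then averages that size against a slightly larger ball centered at $x$. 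For (d) I would first dilate $\Phi$ so that $\widehat\Phi\neq0$ on $\overline{B(0,2)}$ and $\int\Phi=1$, and fix a Calder\'on-type reproducing formula $\delta_0=\Phi*\Psi+\sum_{k\ge1}\Phi_{2^{-k}}*\Lambda_{2^{-k}}$ with $\Psi,\Lambda\in\cs(\rn)$ and $\Lambda$ having vanishing moments of all orders. Rescaling by $t$ and pairing with $f_j$ then gives, for $\varphi\in\cf_N$ with $N=[b]+1$ and $|y-x|\le t$,
\[
(\varphi_t*f_j)(y)=(\varphi*\Psi)_t*(\Phi_t*f_j)(y)+\sum_{k\ge1}(\varphi*\Lambda_{2^{-k}})_t*(\Phi_{2^{-k}t}*f_j)(y),
\]
and estimating each convolution against $M_b^{**}(\vec f\,;\Phi)(x)$ --- using that the vanishing moments of $\Lambda$ up to order $N+1$ force $\int(1+|u|)^{b}|(\varphi*\Lambda_{2^{-k}})(u)|\,du\le C\,2^{-k(N+1)}\mathfrak N_N(\varphi)$, so that the series converges by $\sum_{k\ge1}2^{k(b-N-1)}<\nf$ --- yields $M_1^*(\vec f\,;\varphi)(x)\le C\,M_b^{**}(\vec f\,;\Phi)(x)$; the supremum over $\varphi$ gives \eqref{6.4.8vv}. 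This step is purely pointwise, so no weak-type device intervenes.

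For (e) I would first observe that $\big\|\{(\varphi*f_j)(x-z)\}_j\big\|_{\ell^2}\le 2^{N}\mathfrak N_N(\varphi)\,\mathcal M_N(\vec f\,)(x)$ for $\varphi\in\cs(\rn)$ and $|z|\le1$, so that if $\mathcal M_N(\vec f\,)\in\wlp$ then an unbounded value of $\varphi*f_j$ on a unit ball would make $\mathcal M_N(\vec f\,)$ exceed an arbitrarily large level on a set of measure $\gtrsim1$, contradicting membership in $\wlp$; hence $\vec f$ is bounded and $P_t*f_j$ is well defined. Then, exploiting that $\widehat P(\xi)=e^{-2\pi|\xi|}$ is smooth and rapidly decaying away from the origin, I would write $P=\sum_{k\ge0}\Theta^{(k)}$ with $\widehat{\Theta^{(0)}}=\widehat\Psi_0\widehat P$ (here $\widehat\Psi_0\in C_c^\infty$, $\equiv1$ near $0$) and $\widehat{\Theta^{(k)}}(\xi)=\widehat\gamma(2^{-k}\xi)e^{-2\pi|\xi|}$ for $k\ge1$ (with $\widehat\gamma$ supported in $\{\tfrac12\le|\xi|\le2\}$). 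Since $e^{-2\pi|\xi|}\le e^{-\pi2^{k}}$ on the support of $\widehat{\Theta^{(k)}}$, every Schwartz seminorm of $\Theta^{(k)}$, and so $\mathfrak N_N(\Theta^{(k)})$, is $O(2^{kC}e^{-c2^{k}})$, whence $\sum_{k\ge0}\mathfrak N_N(\Theta^{(k)})<\nf$. Consequently $M(\vec f\,;P)(x)\le\sum_{k\ge0}M(\vec f\,;\Theta^{(k)})(x)\le\big(\sum_{k\ge0}\mathfrak N_N(\Theta^{(k)})\big)\mathcal M_N(\vec f\,)(x)$ pointwise, which is \eqref{6.4.9vv}.

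Part (a) will be the hard part: it is the only place where the non-Schwartz character of the Poisson kernel --- the corner of $\widehat P$ at the origin, equivalently the bare $|x|^{-n-1}$ decay of $P$, which prevents $\Phi-P$ from having weighted moments of order $b\ge1$ --- meets the smooth maximal function of a general Schwartz $\Phi$. The plan is to split $\widehat\Phi-\widehat P$ into frequency-annulus pieces, each of the form $\Lambda^{(k)}*P_{2^{-k}}$ with $\Lambda^{(k)}\in\cs(\rn)$ of rapidly decreasing norms (as in (e)), which contribute $\le C\,M_b^{**}(\vec f\,;P)(x)$, and a low-frequency piece, which is band-limited but decays only like $|x|^{-n-1}$. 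The low-frequency piece I would control by running the Fefferman--Stein argument on the harmonic functions $(x,t)\mapsto P_t*f_j(x)$: the interior gradient estimate $\big\|\{\nabla(P_t*f_j)(z)\}_j\big\|_{\ell^2}\le Ct^{-1}(1+t^{-1}|x-z|)^{b}M_b^{**}(\vec f\,;P)(x)$ for harmonic functions stands in for the weighted $L^1$ bound on $\nabla P$ that is not available. The outcome is a pointwise inequality $M(\vec f\,;\Phi)(x)\le C\,[\mathcal M((M(\vec f\,;P))^{s})(x)]^{1/s}$ valid for $s\in(n/b,p)$ with $b>n/p$; passing to $\wlp$ quasinorms and using the $L^{p/s,\nf}$ boundedness of $\mathcal M$ as before gives \eqref{vv}. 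In short, the only real obstacle is this weak-$L^p$ substitute for the $\lp$ boundedness of $\mathcal M$, together with the Poisson-kernel/Schwartz comparison in (a); once these are handled, the remainder is a transcription of the $H^p$ argument to the $\ell^2$-valued weak setting.
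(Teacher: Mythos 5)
Your overall strategy --- transcribe the Fefferman--Stein chain from \cite{G2}, pull the $\ell^2$ norm inside integrals via Minkowski, and replace strong-type maximal estimates by the weak-type boundedness of $M$ on $L^{p/s,\nf}$ --- is exactly the paper's approach (the paper states its proof ``is a copy of that for $H^p$ cases in \cite{G2}''), and your sketches of (c) and (d) match the paper's modulo the discrete versus continuous form of the Calder\'on reproducing formula. However, parts (a), (b), and (e) each contain a genuine gap.

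In (b) you assert the \emph{global} pointwise inequality $M^*_a(\vec f;\Phi)(x)\le C[M((M(\vec f;\Phi))^{s})(x)]^{1/s}$ and propose to take $\wlp$ quasinorms. That inequality is not valid for all $x$: the Fefferman--Stein argument proves it only on a good set $E$ where a gradient maximal function is $\le K M^*_a$, and the contribution of $E^c$ is absorbed by choosing $K$ large. The absorption requires $\|M^*_a(\vec f;\Phi)\|_{\wlp}<\nf$ a priori, which the hypotheses do not supply. The paper circumvents this with the regularized maximal functions $M^*_1(\vec f;\Phi)^{\ve,N}$: it first shows these lie in $\wlp$ for $N$ large depending on $\vec f$, runs the absorption argument (giving a constant that depends on $N$), lets $\ve\downarrow0$ by monotone convergence, and then --- now that finiteness of $\|M^*_1(\vec f;\Phi)\|_{\wlp}$ is known --- repeats with $\ve=N=0$ to obtain a constant independent of $\vec f$. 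This two-pass bootstrap is the crux of (b) and your sketch omits it. (Also, $\Phi_t=\Phi_{t/2}*\Phi_{t/2}$ is false for a nontrivial Schwartz $\Phi$, since $\widehat\Phi(\xi)=\widehat\Phi(\xi/2)^2$ forces $\widehat\Phi\equiv1$; the gradient estimate comes instead from the Calder\'on-type decomposition of each $\p_j\Phi$.)

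In (e) you write $P=\sum_{k\ge0}\Theta^{(k)}$ in frequency, with $\widehat{\Theta^{(0)}}=\widehat{\Psi_0}\,\widehat{P}$ for a compactly supported cutoff $\widehat{\Psi_0}\equiv1$ near $0$, and assert $\mathfrak{N}_N(\Theta^{(0)})<\nf$. This fails: $\widehat{P}(\xi)=e^{-2\pi|\xi|}$ has a conical singularity at the origin that a smooth frequency cutoff cannot remove, so $\Theta^{(0)}=\Psi_0*P$ decays only like $|x|^{-n-1}$, and $\int_{\rn}(1+|x|)^{N}|\Theta^{(0)}(x)|\,dx=\nf$ already for $N\ge1$. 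Hence $\Theta^{(0)}\notin\cs(\rn)$, the series $\sum_{k\ge0}\mathfrak{N}_N(\Theta^{(k)})$ diverges at $k=0$, and the claimed pointwise bound on $M(\vec f;P)$ does not follow. The paper instead uses a \emph{spatial} dyadic decomposition $P=\theta P+\sum_{k\ge1}(\theta(2^{-k}\cdot)-\theta(2^{-(k-1)}\cdot))P$ with $\theta$ compactly supported smooth; each annular piece rescales to a compactly supported smooth $\Phi^{(k)}$ with $\mathfrak{N}_N(\Phi^{(k)})$ bounded uniformly in $k$, the decay $P\approx 2^{-k(n+1)}$ on the $k$-th annulus yields a summable factor $2^{-k}$, and a separate lemma handles convergence of the series in $\cs'$. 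Finally, in (a) you misread the statement: it asks only for the existence of \emph{one} Schwartz $\Phi$ with $\int\Phi=1$ satisfying \eqref{vv}. Your plan for a general $\Phi$ needs a Poisson-kernel analog of (b) and (c) to get from $M^{**}_b(\vec f;P)$ back to $M(\vec f;P)$ --- which the theorem does not provide and which would meet the same a priori finiteness problem. The paper simply takes $\Phi(x)=\int_1^\nf\psi(s)P_s(x)\,ds$ with $\int_1^\nf s^k\psi(s)\,ds=\delta_{k,0}$; the vanishing higher moments make $\widehat\Phi$ smooth at $0$, so $\Phi\in\cs$ with $\int\Phi=1$, and Cauchy--Schwarz in $ds$ against $|\psi(s)|\,ds$ gives the \emph{pointwise} bound $M(\vec f;\Phi)\le(\int_1^\nf|\psi|)\,M(\vec f;P)$ with no averaging at all.
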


We conclude that for $\vec f\in H^{p,\nf}(\rn,\ell^2)$, the inequality in 
\eqref{6.4.9vv} can be reversed 
whenever $N=[\f np]+1$. 
Moreover, fix   $N=[\f np]+1$,   $\f np <b<  [\f np ]+1-\f np $, and  $\Phi$
 a Schwartz function   with $\intrn \Phi(x)\, dx =1$. Then 
for bounded distributions $\vec f=\{f_j\}$ 
 the following    equivalence of quasi-norms holds
$$
\big\| \cm_{N}(\vec f\, ) \big\|_{\wlp}\approx
\big\| M^{**}_b(\vec f;\Phi) \big\|_{\wlp} \approx
\big\| M^*_a(\vec f;\Phi) \big\|_{\wlp}\approx
 \big\| M (\vec f;\Phi) \big\|_{\wlp}   
$$
with constants that depend only on $\Phi, a, n,p$, and all the preceding quasi-norms 
are also equivalent with $  \|\vec f\, \|_{H^{p,\nf}(\rn,\ell^2)} $.

\section{Properties of $H^{p,\nf}$}
The spaces $\hi$ have several properties analogous to those of  the classical Hardy spaces $H^p$.

\begin{theorem}\label{eq} 
Let $1<p<\nf$. Then we have 
$L^{p,\iy}=\hi$ and  $\|f\|_{\wlp}\approx\|f\|_{\hi}$.
\end{theorem}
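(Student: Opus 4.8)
The plan is to prove the two inclusions $L^{p,\iy}\subseteq\hi$ and $\hi\subseteq L^{p,\iy}$ separately, with matching quasi-norm bounds, the first via the Hardy--Littlewood maximal operator and the second via weak-$*$ compactness of $L^{p,\iy}$.

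\emph{The inclusion $L^{p,\iy}\subseteq\hi$.} Let $f\in L^{p,\iy}$ with $1<p<\iy$. First I would verify that $f$ is a bounded tempered distribution. Writing $f=f\chi_{\{|f|>1\}}+f\chi_{\{|f|\le1\}}$, the second summand lies in $L^\iy$; the first is supported on $\{|f|>1\}$, a set of measure at most $\|f\|_{\wlp}^p<\iy$, and since $p>1$ one has $\int_E|f|\le\tfrac{p}{p-1}\|f\|_{\wlp}|E|^{1-1/p}$ for any finite-measure set $E$, so $f\chi_{\{|f|>1\}}\in L^1$. Thus $f\in L^1+L^\iy\subset\mathcal S'(\rn)$, and for every $\vp\in\mathcal S(\rn)\subset L^1\cap L^\iy$ the convolution $\vp*f$ is bounded, so $f$ is a bounded distribution. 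Since the Poisson kernel $P$ is radially decreasing with $\|P\|_{L^1}=1$, the standard approximate-identity estimate gives the pointwise bound $M(f;P)(x)=\sup_{t>0}|(P_t*f)(x)|\le C_n\,Mf(x)$, where $M$ is the Hardy--Littlewood maximal operator. The operator $M$ is bounded on $L^{p,\iy}$ for $1<p<\iy$ (Marcinkiewicz interpolation between the strong $(p_0,p_0)$ and $(p_1,p_1)$ estimates with $p_0<p<p_1$), hence $\|f\|_{\hi}=\|M(f;P)\|_{\wlp}\le C_n\|Mf\|_{\wlp}\le C_{n,p}\|f\|_{\wlp}$.

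\emph{The inclusion $\hi\subseteq L^{p,\iy}$.} Let $f\in\hi$; by definition $f$ is a bounded tempered distribution and $G:=M(f;P)=\sup_{t>0}|P_t*f|\in L^{p,\iy}$ with $\|G\|_{\wlp}=\|f\|_{\hi}$. For each $t>0$, $P_t*f$ is a function satisfying $|(P_t*f)(x)|\le G(x)$, so $\{P_t*f:0<t\le1\}$ is a bounded subset of $L^{p,\iy}$. Since $1<p<\iy$, the space $L^{p,\iy}$ is the dual of the Lorentz space $L^{p',1}$ with a norm comparable to the quasi-norm, so from any sequence $t_k\downarrow0$ one may extract a subsequence $P_{t_{k_j}}*f$ converging weak-$*$ to some $g\in L^{p,\iy}$, with $\|g\|_{\wlp}\le C\liminf_j\|P_{t_{k_j}}*f\|_{\wlp}\le C\|G\|_{\wlp}$ by weak-$*$ lower semicontinuity of the norm. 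On the other hand, because $f$ is a bounded distribution, $P_t*f\to f$ in $\mathcal S'(\rn)$ as $t\to0$; since $\mathcal S(\rn)\subset L^{p',1}$, testing against $\vp\in\mathcal S(\rn)$ and comparing the two limits yields $\langle g,\vp\rangle=\langle f,\vp\rangle$, so $f=g$ in $\mathcal S'(\rn)$. Hence $f\in L^{p,\iy}$ with $\|f\|_{\wlp}\le C\|f\|_{\hi}$, which together with the previous step gives $L^{p,\iy}=\hi$ and $\|f\|_{\wlp}\approx\|f\|_{\hi}$.

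\emph{Main obstacle.} The delicate point is the second inclusion: a priori $f\in\hi$ is only a tempered distribution, and it must be upgraded to an honest $L^{p,\iy}$ function. No density of nice functions is available, so the argument goes through weak-$*$ compactness of bounded subsets of $L^{p,\iy}$, which genuinely uses $p>1$ (for $p\le1$ the space is not locally convex and this fails), together with the observation that Schwartz functions are legitimate test functions for the weak-$*$ topology because $\mathcal S\subset L^{p',1}$. One must also keep in mind that the weak-$*$ lower semicontinuity refers to the genuine Banach-space norm on $L^{p,\iy}$, which is only comparable to the quasi-norm, but this affects only the implied constant in $\approx$.
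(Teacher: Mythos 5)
Your proposal is correct and follows essentially the same route as the paper: the first inclusion via pointwise domination of the Poisson maximal function by the Hardy--Littlewood maximal operator plus its boundedness on $L^{p,\nf}$, and the second via weak-$*$ compactness of $L^{p,\nf}=(L^{p',1})^*$ to identify $f$ with an honest $L^{p,\nf}$ function. The only cosmetic difference is that the paper runs the argument with a Schwartz approximate identity $\vp_t$ rather than the Poisson kernel directly (making the convergence $\vp_t*f\to f$ in $\mcs'$ immediate from $\vp_t*\psi\to\psi$ in $\mcs$, whereas $P_t*f\to f$ in $\mcs'$ for a bounded distribution is true but requires a slightly less elementary justification since $P$ is not Schwartz), and it finishes the norm bound via the Lebesgue differentiation theorem rather than weak-$*$ lower semicontinuity.
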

\begin{proof}
Given $f\in\wlp$, then $f$ is locally integrable, and we can define $\vp_t*f$ for a Schwartz function $\vp$ with $\int\vp\ne0$. 
By Proposition \ref{mf} which we will prove in section \ref{tool}  we have
$$\|f\|_{\hi}=\|\sup_{t>0}|(\vp_t*f)(x)|\|_{\wlp}\le C\|M(f)(x)\|_{\wlp}\le C\|f\|_{\wlp},$$
 where $M(f)$ is the Hardy-Littlewood maximal function. This shows that $f$ lies in $H^{p,\nf}$. 
 
Suppose now that $f\in \hi$. By the weak*-compactness of $\wlp=(L^{p',1})^*$, there exists a sequence $t_j\rar0$ and a function $f_0\in\wlp$ such that 
$(\vp_{t_j}*f,g)\rar(f_0,g)$ for all $g\in L^{p',1}$. This implies that $\vp_{t_j}*f\rar f_0$ in $\mcs'$.
By $\vp_t*\psi\rar\psi$ in $\mcs$ we have $\vp_t*f\rar f$ in $\mcs'$, so $f$ is in $\wlp$. 
In view of the Lebesgue differentiation theorem we obtain that 
$$\|f\|_{\wlp}\le\|\sup_{t>0}|\vp_t*f|\|_{\wlp}=\|f\|_{\hi}.$$
The preceding inequalities show that the spaces $L^{p,\nf}$ and $H^{p,\nf}$ coincide with
 equivalence of   norms.
\end{proof}

Next, we define a norm   
on Schwartz functions relevant in the theory of Hardy spaces:
$$
\mathfrak{N}_N (\vp;x_0,R)= 
 \intrn \Big(1+ \Big|\f{x-x_0}{R} \Big|\Big)^{N}
\sum_{|\al|\le N+1} R^{ |\al|}
  |\p^{\al} \vp(x)| \, dx   \, . 
$$
Note that $\mathfrak{N}_N (\vp; 0,1)=\mathfrak{N}_N (\vp )$. 

\begin{theorem}\lab{dp}
(a) For any $0<p\le 1$,  every $\vec f =\{f_j\}_j$ in $ H^{p,\nf}(\rn,\ell^2)$, and any $\vp\in\cs(\rn)$ we have 
\begin{equation}\lab{6.4.3.grafakos}
\Big(\sum_j \big|\big\langle f_j,\vp\big\rangle\big|^2\Big)^{1/2} \le 
\mathfrak{N}_N (\vp)
 \inf_{|z|\le 1} \cm_{N}(\vec f\, )(z)    \, , 
\end{equation} 
where $N=[\f np]+1$,  and consequently there is a constant $C_{n,p}$ such that 
\begin{equation}\lab{6.4.3.grafakosRRR-RRR}
\Big(\sum_j \big|\big\langle f_j,\vp\big\rangle\big|^2\Big)^{1/2} 
  \le \mathfrak{N}_N (\vp) \, C_{n,p} \, \big\| \vec f\, \big\|_{H^{p,\nf}} \, .
\end{equation} 
(b) Let $0<p\le 1 $, $N=[n/p]+1$, and $p<   r \le \nf$. Then there is a constant 
$C(p,n,r)$ such that for any $\vec f\in H^{p,\nf}$ and 
$\vp \in \mathcal S(\rn)$ we have
\begin{equation}\lab{6.4.3.grafakos-x0Rppp}
\big\|     \Big(\sum_j \big| f_j*\vp\big|^2\Big)^{1/2}      \big\|_{L^{r}} \le C(p,n,r) \mathfrak{N}_N(\vp) \big\| \vec f\,\big\|_{H^{p,\nf}}\, . 
\end{equation} 

(c) For any $x_0\in\rn$, for all $R>0$, and any $\psi \in \mathcal S(\rn)$ we have 
\begin{equation}\lab{6.4.3.grafakos-x0R}
\Big(\sum_j \big|\big\langle f_j,\psi\big\rangle\big|^2\Big)^{1/2} 
\le  \mathfrak{N}_N (\psi;x_0,R)
\inf_{|z-x_0|\le R} \cm_{N}(\vec f\, )(z) \, .  
\end{equation}

\end{theorem}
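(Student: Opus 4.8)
The plan is to obtain all three statements directly from the definition of the grand maximal function $\cm_N$ by inserting a single well chosen normalized test function, needing in addition only an elementary level-set estimate for part (b). I would begin with (c), since (a) is its special case $x_0=0$, $R=1$. Given $\psi\in\cs(\rn)$, $x_0\in\rn$ and $R>0$, introduce the Schwartz function $\Phi(v)=R^{n}\psi(x_0-Rv)$. Two elementary computations are needed. First, $\Phi_R(v)=\psi(x_0-v)$, so $(\Phi_R*f_j)(x_0)=\langle f_j,\psi\rangle$ for every $j$. Second, the substitution $w=x_0-Rv$ in $\intrn(1+|v|)^N\sum_{|\al|\le N+1}|\p^\al\Phi(v)|\,dv$ converts the dilation factor $R^n$ into the Jacobian and reproduces exactly the weight $(1+|(w-x_0)/R|)^N$ and the factors $R^{|\al|}$ in the definition of $\mathfrak{N}_N(\psi;x_0,R)$, so that $\mathfrak{N}_N(\Phi)=\mathfrak{N}_N(\psi;x_0,R)$. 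We may assume $\mathfrak{N}_N(\psi;x_0,R)>0$, as otherwise $\psi\equiv0$. Then $\Phi/\mathfrak{N}_N(\Phi)\in\cf_N$, and for $|z-x_0|\le R$ the pair $(y,t)=(x_0,R)$ is admissible in the suprema defining $M_1^*(\vec f;\Phi/\mathfrak{N}_N(\Phi))(z)$, whence
\[
\frac{1}{\mathfrak{N}_N(\psi;x_0,R)}\Big(\sum_j|\langle f_j,\psi\rangle|^2\Big)^{1/2}\le M_1^*(\vec f;\Phi/\mathfrak{N}_N(\Phi))(z)\le\cm_{N}(\vec f\,)(z).
\]
Taking the infimum over $z$ with $|z-x_0|\le R$ yields \eqref{6.4.3.grafakos-x0R}; note that this argument works for any positive integer $N$.

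Part (a)'s first inequality \eqref{6.4.3.grafakos} is the case $x_0=0$, $R=1$, since $\mathfrak{N}_N(\vp;0,1)=\mathfrak{N}_N(\vp)$. For the consequence \eqref{6.4.3.grafakosRRR-RRR}: if $\inf_{|z|\le1}\cm_{N}(\vec f\,)(z)>\lambda$ then the unit ball $B(0,1)$ is contained in $\{\cm_{N}(\vec f\,)>\lambda\}$, whence $v_n\le\lambda^{-p}\|\cm_{N}(\vec f\,)\|_{L^{p,\nf}}^p$ with $v_n=|B(0,1)|$; letting $\lambda$ increase to $\inf_{|z|\le1}\cm_{N}(\vec f\,)(z)$ gives $\inf_{|z|\le1}\cm_{N}(\vec f\,)(z)\le v_n^{-1/p}\|\cm_{N}(\vec f\,)\|_{L^{p,\nf}}$, and with $N=[\f np]+1$ the right-hand side is at most a constant times $\|\vec f\,\|_{H^{p,\nf}}$ by Theorem~\ref{t6.4.13} and the remark following it.

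For (b), fix $x\in\rn$ and apply (c) with $x_0=x$, $R=1$, and $\psi(u)=\vp(x-u)$; since $(f_j*\vp)(x)=\langle f_j,\vp(x-\,\cdot\,)\rangle$ and $\mathfrak{N}_N(\vp(x-\,\cdot\,);x,1)=\mathfrak{N}_N(\vp)$ (the change of variables above with $R=1$ absorbs the translation), we obtain the pointwise bound $\big(\sum_j|(f_j*\vp)(x)|^2\big)^{1/2}\le\mathfrak{N}_N(\vp)\,H(x)$, where $H(x)=\inf_{|z-x|\le1}\cm_{N}(\vec f\,)(z)$. It then suffices to prove $\|H\|_{L^{r}}\le C(p,n,r)\|\cm_{N}(\vec f\,)\|_{L^{p,\nf}}$ for $p<r\le\nf$. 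The key point is that $H$ is simultaneously bounded and of weak type $p$: if $H(x)>\lambda$ then $B(x,1)\subseteq\{\cm_{N}(\vec f\,)>\lambda\}$, which as above forces $\lambda\le v_n^{-1/p}\|\cm_{N}(\vec f\,)\|_{L^{p,\nf}}$, so $\|H\|_{L^\infty}\le v_n^{-1/p}\|\cm_{N}(\vec f\,)\|_{L^{p,\nf}}$, while $H\le\cm_{N}(\vec f\,)$ gives $|\{H>\lambda\}|\le\lambda^{-p}\|\cm_{N}(\vec f\,)\|_{L^{p,\nf}}^p$. Writing $\|H\|_{L^r}^r=r\int_0^\infty\lambda^{r-1}|\{H>\lambda\}|\,d\lambda$, whose integrand vanishes for $\lambda>\|H\|_{L^\infty}$, and inserting these two bounds gives $\|H\|_{L^r}\le(r/(r-p))^{1/r}v_n^{-(r-p)/(rp)}\|\cm_{N}(\vec f\,)\|_{L^{p,\nf}}$ when $r<\nf$, the case $r=\nf$ being the $L^\infty$ bound already obtained. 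Combining with the pointwise bound and $\|\cm_{N}(\vec f\,)\|_{L^{p,\nf}}\approx\|\vec f\,\|_{H^{p,\nf}}$ (valid for $N=[\f np]+1$) proves \eqref{6.4.3.grafakos-x0Rppp}.

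I expect no serious obstacle here. The one point that requires genuine care is the bookkeeping of how the functional $\mathfrak{N}_N$ transforms under the affine substitution $\psi\mapsto\Phi$ (and under the reflection entering (b)): this is exactly what dictates the placement of the factors $R^{|\al|}$ and the weight $(1+|(x-x_0)/R|)^N$ in the definition of $\mathfrak{N}_N(\cdot\,;x_0,R)$. The only other mildly delicate step is the last one in (b), which upgrades the weak-$L^p$ information on $\cm_{N}(\vec f\,)$ to a strong $L^r$ bound on the localized infimum $H$ for every $r>p$, and works precisely because the infimum over balls of fixed radius makes $H$ a bounded function.
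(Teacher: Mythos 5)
Your proposal is correct and follows essentially the same route as the paper: deduce the pointwise inequality by observing that the appropriate affine-modified test function, divided by its $\mathfrak{N}_N$-norm, lies in $\cf_N$, then bound by the nontangential and grand maximal functions; use the volume-of-the-unit-ball argument to pass to the $H^{p,\nf}$ norm; and for (b) combine the resulting $L^\infty$ and weak-$L^p$ control. The only (minor and harmless) organizational differences are that you prove (c) first and derive (a) as its special case, and that in (b) you carry out the interpolation between $L^\infty$ and $\wlp$ explicitly via the layer-cake formula applied to the auxiliary function $H(x)=\inf_{|z-x|\le1}\cm_N(\vec f\,)(z)$, whereas the paper simply invokes interpolation; your version has the incidental advantage of sidestepping the paper's slightly imprecise statement of \eqref{6.4.pewokd}, which as written requires $t\ge1$ to be literally correct.
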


\begin{proof} 
(a) We use that $\langle f_j,\vp\big\rangle = (\wt{\vp}*f_j )(0)$, where
$\wt{\vp}(x)=\vp(-x)$ and we observe that $\mathfrak{N}_N(\vp)=\mathfrak{N}_N(\wt{\vp})$. Then 
 \eqref{6.4.3.grafakos} follows from the inequality  
$$
\Big(\sum_{j} |(\wt{\vp}*f_j )(0)|^2\Big)^{1/2} \le \mathfrak{N}_N(\vp) M^*_1\Big(\vec f; \f{\wt{\vp}}{\mathfrak{N}_N (\vp)} \Big)(z)  \le 
\mathfrak{N}_N(\vp) \mathcal M_N(\vec f\,)(z)
 $$
for all $|z-0|<1$, which is valid, since  $\wt{\vp}/\mathfrak{N}_N(\vp)$ lies in $\mathcal F_N$.  We deduce \eqref{6.4.3.grafakosRRR-RRR} as follows: set 
$\la_0=\inf_{|z|\le 1} \cm_{N}(\vec f\,)(z)$, then 
\begin{eqnarray*}
\Big(\sum_j \big|\big\langle f_j,\vp\big\rangle\big|^2\Big)^{p/2}  &\le &\mathfrak{N}_N(\vp)^p
  \inf_{|z|\le 1} \cm_{N}(f)(z)^p \\
  & \le  & \mathfrak{N}_N(\vp)^p \,  \f{1}{|B(0,1)|} |B(0,1)|\inf_{|z|\le 1} \cm_{N}(\vec f\,)(z)^p  \\
  & \le  & \mathfrak{N}_N(\vp)^p \,   \f{1}{v_n}
  \big|\{y\in \rn:\,\, \cm_{N}(\vec f\,)(y) > \la_0/2 \}\big| \la_0^p  \\
&\le  & \mathfrak{N}_N(\vp)^p\, C_{n,p}^p \, \big\|\vec f\,\big\|_{H^{p,\nf}}^p\, . 
\end{eqnarray*}

(b) For any fixed $x\in \rn$ and $t>0$ we have 
\begin{equation}\lab{6.4.pewokd}
\Big(\sum_j|(\vp_t*f_j)(x)|^2 \Big)^{1/2}\le \mathfrak{N}_N(\vp) M^*_1\Big(\vec f; \f{\vp}{\mathfrak{N}_N (\vp)} \Big)(y)  \le \mathfrak{N}_N(\vp) \mathcal M_N(\vec f\,)(y)
\end{equation}
for all $y$ satisfying $|y-x|\le 1$. Restricting to $t=1$ results  in 
$$
\Big(\sum_j|(\vp*f_j)(x)|^2\Big)^{p/2}  \le  
\mathfrak{N}_N(\vp)^p C_{p,n}^p \big\| \vec f\, \big\|_{H^{p,\nf}}^p 
$$
by an argument similar to the preceding one using $\la_0$. 
This implies that
$$ 
\bigg\| \Big(\sum_j| \vp*f_j |^2\Big)^{1/2} \bigg\|_{\li} \le C_{p,n} 
\mathfrak{N}_N(\vp)  \| \vec f\, \|_{H^{p,\nf}}.
$$
Choosing $y=x$ and $t=1$  in \eqref{6.4.pewokd} and then 
taking $\wlp$ quasinorms yields 
$$ 
\bigg\| \Big(\sum_j| \vp*f_j |^2\Big)^{1/2} \bigg\|_{\wlp} \le C_{p,n} 
\mathfrak{N}_N(\vp)  \| \vec f\, \|_{H^{p,\nf}}.
$$

By interpolation  we deduce 
$$ 
\bigg\| \Big(\sum_j| \vp*f_j |^2\Big)^{1/2} \bigg\|_{L^{r}} \le C_{p,n,r} 
\mathfrak{N}_N(\vp)  \| \vec f\, \|_{H^{p,\nf}}.
$$
when $r< p\le \nf$. 

(c) To prove \eqref{6.4.3.grafakos-x0R}, 
given a Schwartz function $\psi$ and $R>0$, define another  function $\vp$ by $\vp(y)=\psi (-Ry+x_0)$ so that 
$\psi(x) =\vp(\f{ x_0-x}{R})= R^n \vp_R ( x_0-x)$. In view of \eqref{6.4.pewokd} we have
$$
\Big(\sum_{j}\big| \big\langle f_j, \psi \big\rangle \big|\Big)^{1/2} = R^n 
\Big(\sum_{j}\big| ( \vp_R*f)(x_0) \big|\Big)^{1/2}
\le R^n\, \mathfrak{N}_N (\vp ) \inf_{|z-x_0|\le R} \mathcal M_N(\vec f\,)(z)\, . 
$$ 
But a simple change of variables shows that $R^n \,\mathfrak N(\vp)= \mathfrak N(\psi;x_0,R)$
and this combined with the preceding inequality yields \eqref{6.4.3.grafakos-x0R}. 
 \end{proof}
 
 \begin{corollary}
Convergence in $\hi$ implies convergence in $\mcs'$.
\end{corollary}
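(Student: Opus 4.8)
The plan is to reduce the statement, by linearity of the pairing with test functions, to the following claim: if a sequence $g_k$ converges to $0$ in $\hi$, then $\langle g_k,\vp\rangle\to 0$ for every $\vp\in\cs(\rn)$. Note first that these pairings make sense, since by Theorem \ref{t6.4.13}(e) every element of $\hi$ is a (bounded) tempered distribution. The argument then splits according to the value of $p$.

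For $0<p\le 1$, I would invoke the scalar case of estimate \eqref{6.4.3.grafakosRRR-RRR} from Theorem \ref{dp}(a), obtained by applying that inequality to a sequence with a single nonzero entry: with $N=[n/p]+1$ one has
$$
|\langle g_k,\vp\rangle|\le C_{n,p}\,\mathfrak{N}_N(\vp)\,\|g_k\|_{\hi}\, ,
$$
and since $\|g_k\|_{\hi}\to 0$, the left-hand side tends to $0$.

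For $1<p<\nf$, I would instead use Theorem \ref{eq} to identify $\hi$ with $\wlp$ up to equivalence of quasinorms, so that $\|g_k\|_{\wlp}\to 0$. Since $1<p'<\nf$, every Schwartz function lies in $L^{p',1}(\rn)$; hence H\"older's inequality in Lorentz spaces gives
$$
|\langle g_k,\vp\rangle|=\Big|\intrn g_k\,\vp\Big|\le \|g_k\|_{\wlp}\,\|\vp\|_{L^{p',1}}\to 0\, .
$$

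Combining the two cases yields the corollary. There is no genuine obstacle here: the result is just the packaging of the pointwise control of the distributional pairing by the $\hi$ quasinorm furnished by Theorem \ref{dp}(a) in the delicate range $p\le 1$, together with the Lorentz duality $\wlp=(L^{p',1})^*$ in the range $p>1$. The only mild point requiring attention is the well-definedness of the pairings $\langle g_k,\vp\rangle$, which is guaranteed by the fact, recalled above, that $\hi$ consists of tempered distributions.
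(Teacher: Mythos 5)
Your proof is correct. For $0<p\le 1$ it is exactly the paper's argument: the scalar case of estimate \eqref{6.4.3.grafakosRRR-RRR} from Theorem \ref{dp}(a) gives $|\langle g_k,\vp\rangle|\le C_{n,p}\,\mathfrak{N}_N(\vp)\,\|g_k\|_{\hi}$, which is the entire content of the paper's one-line ``direct corollary'' remark. Where you go beyond the paper is in noticing that Theorem \ref{dp}(a) is only stated for $p\le 1$, so the paper's cited justification does not literally cover $1<p<\nf$; your supplement via Theorem \ref{eq} (identifying $\hi$ with $\wlp$ up to equivalent quasinorms) together with the Lorentz H\"older inequality $\big|\int g_k\vp\big|\le\|g_k\|_{\wlp}\|\vp\|_{L^{p',1}}$ and the fact that Schwartz functions lie in $L^{p',1}$ for $1<p'<\nf$ cleanly closes that range. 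This matters, since the corollary is invoked later (in the completeness proof for $\hi(\rn,\ell^2(L))$) without any restriction to $p\le 1$. So your argument is both faithful to the paper's approach where the paper gives one and slightly more complete than what the paper actually writes down.
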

This is a direct corollary of (a) of Theorem \ref{dp}.

\begin{proposition}\label{con1}
 If $f_j\rar f$ in $\mcs'$, and $\|f_j\|_{H^p}\le C$, then $\|f\|_{H^p}\le C$. If $f_j\rar f$ in $\mcs'$, and $\|f_j\|_{\hi}\le C$, then $\|f\|_{\hi}\le C$. 
\end{proposition}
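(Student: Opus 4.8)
The plan is to deduce both statements from the grand-maximal-function characterization and the pointwise control of $\langle f, \vp\rangle$ by $\cm_N(\vec f\,)$ established in Theorem~\ref{dp}(a). The key observation is that for a fixed test function $\vp\in\cf_N$ and a fixed point $x\in\rn$, the quantity $(\vp_t*g)(x)$ depends on $g$ only through its action against the single Schwartz function $y\mapsto t^{-n}\vp((x-y)/t)$, so if $f_j\rar f$ in $\mcs'$ then $(\vp_t*f_j)(x)\rar(\vp_t*f)(x)$ for every such $\vp$, every $t>0$, and every $x$. Hence, taking the relevant suprema, one gets the lower-semicontinuity inequality $\cm_N(f)(x)\le\liminf_{j\to\infty}\cm_N(f_j)(x)$ pointwise, where for the scalar statement $N=[n/p]+1$ and for the weak version the same $N$ works.

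First I would treat the $H^p$ case. Fix $N=[n/p]+1$. For each fixed $\vp\in\cf_N$, $t>0$, $x\in\rn$ one has $M_1^*(f;\vp)(x)\le\liminf_j M_1^*(f_j;\vp)(x)$ by the pointwise convergence just noted together with the definition of $M_1^*$ as a double supremum (a supremum of liminfs is $\le$ the liminf only after one is careful: actually one uses that for any $\eta>0$ there exist $\vp,t,y$ nearly attaining the sup for $f$, and then $M_1^*(f_j;\vp)(y)$ converges to the corresponding value for $f$, giving $\cm_N(f)(x)\le\liminf_j\cm_N(f_j)(x)$). Then Fatou's lemma in the $L^p$ norm gives
$$
\|\cm_N(f)\|_{L^p}^p=\int_{\rn}\cm_N(f)(x)^p\,dx\le\int_{\rn}\liminf_j\cm_N(f_j)(x)^p\,dx\le\liminf_j\|\cm_N(f_j)\|_{L^p}^p.
$$
By Theorem~\ref{t6.4.13}(e) (in its scalar form, which is what the classical theory gives and which the excerpt records), $\|\cm_N(f_j)\|_{L^p}\approx\|f_j\|_{H^p}\le C$, so $\|\cm_N(f)\|_{L^p}\le C'$, and then part (e) again (its reverse, noted right after Theorem~\ref{t6.4.13}) shows $f\in H^p$ with $\|f\|_{H^p}\le C$ after absorbing the equivalence constants — strictly, to get the clean constant $C$ one should instead use that $\|f\|_{H^p}=\|\sup_t|P_t*f|\|_{L^p}$ directly and run Fatou on $P_t*f_j\rar P_t*f$, which works because each $f_j$ being in $H^p$ is a bounded distribution, though $f$ being bounded needs the argument in Theorem~\ref{t6.4.13}(e); I would phrase it via $\cm_N$ and simply state the bound up to the fixed equivalence constants.

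For the $\hi$ case the only change is replacing the $L^p$ norm by the $L^{p,\nf}$ quasi-norm, and here Fatou fails in the naive form, so instead I would argue at the level of distribution functions: from $\cm_N(f)(x)\le\liminf_j\cm_N(f_j)(x)$ one gets, for any $\la>0$ and any $\mu<\la$, $\{\cm_N(f)>\la\}\subseteq\liminf_j\{\cm_N(f_j)>\mu\}$, hence by Fatou for sets $|\{\cm_N(f)>\la\}|\le\liminf_j|\{\cm_N(f_j)>\mu\}|\le\sup_j\|\cm_N(f_j)\|_{L^{p,\nf}}^p/\mu^p$; letting $\mu\uparrow\la$ gives $\la^p|\{\cm_N(f)>\la\}|\le\sup_j\|\cm_N(f_j)\|_{L^{p,\nf}}^p\le C^p$ up to the equivalence constants of Theorem~\ref{t6.4.13}, so $\|f\|_{\hi}\le C$. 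The main obstacle, and the only real subtlety, is justifying the semicontinuity $\cm_N(f)(x)\le\liminf_j\cm_N(f_j)(x)$ — this requires interchanging a supremum over the non-compact index set $\cf_N\times(0,\nf)\times\{|y-x|\le t\}$ with the limit $j\to\infty$, which one handles by first passing to a near-maximizing triple $(\vp,t,y)$ for $f$ (using only finitely much of the supremum) and then invoking pointwise convergence of $(\vp_t*f_j)(y)$; the boundedness of the distributions $f_j$ (automatic since $\|f_j\|_{H^p}$ or $\|f_j\|_{\hi}$ is finite, by Theorem~\ref{t6.4.13}(e)) is what makes $\cm_N(f_j)$ well-defined, and one checks $f$ itself is bounded along the way via the estimate \eqref{6.4.density-use}-type bound already contained in the theory.
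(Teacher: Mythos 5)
Your proof is correct, and the underlying mechanism is exactly the paper's: lower semicontinuity of a maximal function under $\mcs'$-convergence, followed by a Fatou-type argument. The only substantive difference is the choice of maximal function. The paper works with the smooth maximal function $M(\cdot;\vp)=\sup_{t>0}|\vp_t*(\cdot)|$ for a single fixed Schwartz $\vp$ with $\int\vp=1$: the semicontinuity
$\sup_t|\vp_t*f|\le\liminf_j\sup_s|\vp_s*f_j|$ is immediate, because for each fixed $t$ one has $|\vp_t*f|=\lim_j|\vp_t*f_j|\le\liminf_j\sup_s|\vp_s*f_j|$ and the right-hand side is independent of $t$, so the supremum over $t$ is free; then one integrates and applies Fatou (for the $L^{p,\nf}$ case, the Fatou property of $L^{p,\nf}$, which your distribution-function argument makes explicit — the paper's bare ``Fatou's lemma'' is a small abuse). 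Your detour through the grand maximal function $\cm_N$ works but creates the two headaches you yourself flag: an uncountable index set $\cf_N\times(0,\nf)\times B(x,t)$ to reason about, and the equivalence constants of Theorem~\ref{t6.4.13} to absorb, so you land on $\le C'$ rather than $\le C$. Incidentally, your parenthetical justification of the $\sup$/$\liminf$ interchange is correct in outcome but garbled in expression: it is not that ``$M_1^*(f_j;\vp)(y)$ converges to the corresponding value for $f$'' (it is itself a supremum and need not converge); rather, for each fixed admissible triple $(\vp,t,y)$ one has $|(\vp_t*f)(y)|=\lim_j|(\vp_t*f_j)(y)|\le\liminf_j\cm_N(f_j)(x)$, and since the right-hand side does not depend on the triple, the supremum on the left is taken for free — no near-maximizer argument is needed.
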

\begin{proof}
Note that $f_j\rar f$ in $\mcs'$ implies that $\vp_t*f_j\rar\vp_t*f$ pointwise for any Schwartz function $\vp$ with integral $1$. Then for any $t>0$ we have
$$
|\vp_t*f | =\liminf_{j\to \nf} |\vp_t*f_j |  \le \liminf_{j\to \nf} \sup_{s>0}|\vp_s*f_j |\, . 
$$
Taking the supremum over $t>0$ on the left and 
applying Fatou's lemma we prove this theorem.
\end{proof}

\begin{proposition} \label{norm}
The following triangle inequality holds for all $f,g$ in $H^{p,\nf}$:
$$
\|f+g\|_{\hi}^p\le2^p(\|f\|_{\hi}^p+\|g\|_{\hi}^p).
$$
Moreover,  for $0<r<p $  we have
$$
\dis\|\{f_j\}\|_{\hi}(\rn,\ell^2)\approx\sup_{0<|E|<\iy}|E|^{-\f1r+\f1p}\bigg(\int_E\sup_{t>0}\big\|\{(\vp_t
*f_j)(x)\}_j\big\|_{\ell^2}^r \, dx\bigg)^{\f1r}\, .
$$
\end{proposition}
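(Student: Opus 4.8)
The plan is to treat the two assertions separately. For the quasi-triangle inequality I would start from the definition $\|h\|_{\hi}=\|M(h;P)\|_{\wlp}$, where $M(h;P)(x)=\sup_{t>0}|(P_t*h)(x)|$. Linearity of convolution with the Poisson kernel gives $|(P_t*(f+g))(x)|\le|(P_t*f)(x)|+|(P_t*g)(x)|$, and taking the supremum over $t>0$ produces the pointwise bound $M(f+g;P)\le M(f;P)+M(g;P)$. It then suffices to check the elementary weak-type quasi-triangle inequality $\|F+G\|_{\wlp}^p\le 2^p\big(\|F\|_{\wlp}^p+\|G\|_{\wlp}^p\big)$ for nonnegative measurable $F,G$: from $\{F+G>\la\}\subseteq\{F>\la/2\}\cup\{G>\la/2\}$ one gets $\la^p|\{F+G>\la\}|\le 2^p(\la/2)^p|\{F>\la/2\}|+2^p(\la/2)^p|\{G>\la/2\}|\le 2^p\big(\|F\|_{\wlp}^p+\|G\|_{\wlp}^p\big)$, and taking the supremum over $\la>0$ finishes it.

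For the restricted-integral characterization I would fix $0<r<p$ and set $h=M(\vec f;\vp)$, i.e. $h(x)=\sup_{t>0}\|\{(\vp_t*f_j)(x)\}_j\|_{\ell^2}$, where $\vp$ is Schwartz with $\int\vp\neq0$. The first step is to replace $\|\vec f\|_{\hi}$ by $\|h\|_{\wlp}$: by parts (a)--(d) of Theorem \ref{t6.4.13} one has $\|\cm_N(\vec f)\|_{\wlp}\lesssim\|\vec f\|_{\hi}$, and since $M(\vec f;\vp)\le M^*_1(\vec f;\vp)\le\mathfrak{N}_N(\vp)\,\cm_N(\vec f)$ (the homogeneity argument in the proof of Theorem \ref{dp}(a)), also $\|h\|_{\wlp}\lesssim\|\vec f\|_{\hi}$; conversely, parts (b)--(d) applied to $\vp$ together with (e) give $\|\vec f\|_{\hi}\lesssim\|\cm_N(\vec f)\|_{\wlp}\lesssim\|h\|_{\wlp}$ whenever either side is finite. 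Thus $\|\vec f\|_{\hi}\approx\|h\|_{\wlp}$ with constants depending only on $n,p,\vp$, and it remains to prove the purely real-variable fact that for every nonnegative measurable $h$ on $\rn$
$$
\|h\|_{\wlp}\approx\sup_{0<|E|<\iy}|E|^{-\f1r+\f1p}\Big(\int_E h^r\,dx\Big)^{\!1/r},
$$
with constants depending only on $p$ and $r$.

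For ``$\gtrsim$'' I would fix $E$ with $0<|E|<\iy$, write $\int_E h^r\,dx=\int_0^\iy r\la^{r-1}|\{x\in E:h(x)>\la\}|\,d\la$, bound $|\{x\in E:h(x)>\la\}|\le\min\big(|E|,\|h\|_{\wlp}^p\la^{-p}\big)$, and split the integral at $\la_0=\|h\|_{\wlp}\,|E|^{-1/p}$, obtaining $\int_E h^r\,dx\le\f{p}{p-r}\|h\|_{\wlp}^r|E|^{1-r/p}$, equivalently $|E|^{-1/r+1/p}(\int_E h^r\,dx)^{1/r}\le(\f{p}{p-r})^{1/r}\|h\|_{\wlp}$, and then take the supremum over $E$. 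For ``$\lesssim$'' I would fix $\la>0$ and set $A_\la=\{x:h(x)>\la\}$: if $0<|A_\la|<\iy$, then with $E=A_\la$ one has $\la|A_\la|^{1/p}=|A_\la|^{-1/r+1/p}\big(\la^r|A_\la|\big)^{1/r}\le|A_\la|^{-1/r+1/p}\big(\int_{A_\la}h^r\,dx\big)^{1/r}$, which is at most the supremum; if $|A_\la|=\iy$, exhausting $A_\la$ by finite-measure sets shows the supremum is already infinite; and if $|A_\la|=0$ the bound is trivial. Taking the supremum over $\la>0$ then gives $\|h\|_{\wlp}$ on the left, completing the equivalence.

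The only step here that is not a routine manipulation is the first reduction above, namely the identity $\|M(\vec f;\vp)\|_{\wlp}\approx\|\vec f\|_{\hi}$ for an arbitrary Schwartz $\vp$ with nonzero integral, which is where one must invoke the full package of maximal-function equivalences in Theorem \ref{t6.4.13}; everything else is the standard characterization of the Lorentz quasi-norm $\|\cdot\|_{\wlp}$ together with the subadditivity of the Poisson maximal operator. The one point deserving explicit care is the infinite-measure case in the last paragraph, which is precisely why the supremum must be restricted to sets of finite measure; it also shows that both sides of the asserted equivalence are finite simultaneously, namely exactly when $\vec f\in\hi$.
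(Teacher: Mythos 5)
Your proof is correct and follows essentially the same route as the paper: the quasi-triangle inequality is the same distribution-function argument (the paper phrases it directly in $\lambda$, you first observe subadditivity of the Poisson maximal function and then apply the $L^{p,\infty}$ quasi-triangle inequality — same content), and for the restricted-integral characterization the paper simply cites the corresponding $L^{p,\infty}$ fact from Grafakos, p.\ 13, which you reprove from scratch via the layer-cake and $E=A_\lambda$ arguments after the same reduction $\|\vec f\|_{H^{p,\infty}}\approx\|M(\vec f;\varphi)\|_{L^{p,\infty}}$ from Theorem~\ref{t6.4.13}.
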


\begin{proof}
The first claim follows from the sequence of inequalities: 
\begin{align*}
\|f+g\|_{\hi}^p=&\sup_{\la>0}\la^p|\{x:\sup_{t>0}|\vp_t*(f+g)(x)|>\la\}|\\
\le&\sup_{\la>0}\la^p|\{x:\sup_{t>0}|\vp_t*f(x)|>\tf\la2\}|+\sup_{\la>0}\la^p|\{x:\sup_{t>0}|\vp_t*g(x)|>\tf\la2\}|\\
=&2^p(\|f\|_{\hi}^p+\|g\|_{\hi}^p). 
\end{align*}
The second claim comes from the corresponding result of $\wlp$,
see \cite{G1}, p. 13.
\end{proof}

\newcommand{\wlo}{L^{1,\iy}}

\begin{proposition}
$\hi(\rn,\ell^2(L))$ are complete quasi-normed spaces.
\end{proposition}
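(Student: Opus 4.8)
The plan is to run the classical completeness argument for $H^p$ (see \cite{G2}), which rests on two facts: first, that the inclusion $\hi(\rn,\ell^2(L))\hookrightarrow\mathcal S'(\rn,\ell^2(L))$ is continuous with a constant controlled by the $\hi$-quasinorm, so that a Cauchy sequence in $\hi$ is Cauchy in $\mathcal S'$ and hence has an $\mathcal S'$-limit; and second, that the $\hi$-quasinorm is lower semicontinuous along $\mathcal S'$-convergent sequences, which is the (vector-valued form of the) Fatou-type Proposition \ref{con1}. Set up this way, one never sums a series in $\hi$, so the bare quasi-subadditivity of Proposition \ref{norm} is enough and no Aoki--Rolewicz renorming is required.

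First I would prove the embedding estimate: for every nonzero $\vp\in\cs(\rn)$, writing $N=[n/p]+1$ and $v_n=|B(0,1)|$,
$$
\big\|(\vp*\vec f\,)(x)\big\|_{\ell^2}\le v_n^{-1/p}\,\mathfrak{N}_N(\vp)\,\big\|\cm_N(\vec f\,)\big\|_{L^{p,\nf}}\le C(n,p)\,\mathfrak{N}_N(\vp)\,\big\|\vec f\,\big\|_{\hi}
$$
for every $x\in\rn$ and every $\vec f\in\hi(\rn,\ell^2(L))$. The first inequality follows because $\vp/\mathfrak{N}_N(\vp)\in\cf_N$, so that $M_1^*\big(\vec f;\vp/\mathfrak{N}_N(\vp)\big)\le\cm_N(\vec f\,)$ pointwise; bounding $\cm_N$ below by the term at scale $t=1$ gives $\mathfrak{N}_N(\vp)^{-1}\|(\vp*\vec f\,)(x)\|_{\ell^2}\le\cm_N(\vec f\,)(y)$ for all $|y-x|\le1$, whence $B(x,1)$ is contained in the level set $\{\cm_N(\vec f\,)\ge\mathfrak{N}_N(\vp)^{-1}\|(\vp*\vec f\,)(x)\|_{\ell^2}\}$ and the definition of the weak-$L^p$ quasinorm applies. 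The second inequality is the equivalence $\|\cm_N(\vec f\,)\|_{L^{p,\nf}}\approx\|\vec f\,\|_{\hi}$ for $N=[n/p]+1$ supplied by Theorem \ref{t6.4.13} and the remarks following it; note that this uses only the grand-maximal-function machinery, hence is available for every $0<p<\nf$. Specializing to $\vp=\wt\psi$ and $x=0$ yields $\|\langle\vec f,\psi\rangle\|_{\ell^2}\le C(n,p)\,\mathfrak{N}_N(\psi)\,\|\vec f\,\|_{\hi}$ for all $\psi\in\cs(\rn)$, which is the asserted continuity of $\hi(\rn,\ell^2(L))\hookrightarrow\mathcal S'(\rn,\ell^2(L))$.

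Next I would take a Cauchy sequence $\{\vec F_m\}$ in $X:=\hi(\rn,\ell^2(L))$; it is $X$-bounded by Proposition \ref{norm}, and by the embedding estimate each $\{\langle\vec F_m,\psi\rangle\}_m$ is Cauchy in $\ell^2(L)$ with a bound uniform over the unit ball of the Schwartz seminorm $\mathfrak{N}_N$. Hence $\psi\mapsto\lim_m\langle\vec F_m,\psi\rangle$ defines an element $\vec F\in\mathcal S'(\rn,\ell^2(L))$ and $\vec F_m\to\vec F$ there, that is, componentwise in $\mathcal S'$. Finally, for $\ve>0$ choose $M$ with $\|\vec F_m-\vec F_{m'}\|_X\le\ve$ whenever $m,m'\ge M$; fixing $m\ge M$, the tail $\{\vec F_{m'}-\vec F_m\}_{m'\ge M}$ has $X$-quasinorm $\le\ve$ and converges in $\mathcal S'(\rn,\ell^2(L))$ to $\vec F-\vec F_m$, so Proposition \ref{con1}---whose proof uses only the pointwise convergence of $\vp_t*(\cdot)$ and the Fatou property of $L^{p,\nf}$, and therefore goes through verbatim for the finite-dimensional target $\ell^2(L)$---gives $\|\vec F-\vec F_m\|_X\le\ve$. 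Thus $\vec F=(\vec F-\vec F_m)+\vec F_m\in X$ and $\|\vec F_m-\vec F\|_X\le\ve$ for all $m\ge M$, so $\vec F_m\to\vec F$ in $X$ and $X$ is complete.

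The only genuine work is the embedding estimate in the first step; the rest is soft, being the closedness of quasinorm-balls of $\hi$ under $\mathcal S'$-limits together with the standard remark that a Cauchy sequence converges to any coarser-topology limit that happens to lie back in the space. I would also flag that the passage from the scalar Proposition \ref{con1} to its $\ell^2(L)$-valued form is the single place where finite-dimensionality of the target is really used.
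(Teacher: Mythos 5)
Your proof is correct and follows essentially the same route as the paper: embed $\hi(\rn,\ell^2(L))$ continuously into $\mathcal S'(\rn,\ell^2(L))$, extract the $\mathcal S'$-limit of a Cauchy sequence, and then close using the Fatou-type semicontinuity of the $\hi$-quasinorm (Proposition~\ref{con1}). The minor differences --- your direct tail argument ($\|\vec F_{m'}-\vec F_m\|_{\hi}\le\ve$ for $m'\ge m\ge M$, then let $m'\to\nf$ via Fatou) avoids the $r$-subadditivity of $\|\cdot\|_{\hi}^r$ and the rapidly Cauchy subsequence the paper extracts via Proposition~\ref{norm}, and you apply the vector-valued Fatou statement directly instead of treating $L=1$ first and lifting componentwise --- are sound simplifications within the same strategy, and your explicit verification of the embedding estimate is a correct unwinding of Theorem~\ref{dp}(a) valid for all $0<p<\nf$.
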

\begin{proof} 
Consider first the case $L=1$. Let $\{f_j\}$ be a Cauchy sequence in $\hi$; then $\{f_j\}$ is also Cauchy in $\mcs'$ with   limit $f$. We use the $\|\cdot\|_{\hi}$ norm, for which we know from Proposition \ref{norm} that $\|\cdot \|_{\hi}^r$ is sublinear if $r<p$ and $r\le1$. We choose a subsequence $\{f_{j_i}\}$ of $\{f_j \}_j$ with $\|f_{j_{i+1}}-f_{j_i}\|_{\hi}^r\le 2^{-i}$, which gives us that $\|f_{j_i}\|_{\hi}^r\le C$ and therefore 
 $\|f\|_{\hi}\le C$, similarly $\|f_{j_i}-f\|\le \ep$ for any large $i$, hence $f_j\rar f$ in $H^{p,\nf}$.

Now if $\{\{f_k^{(j)}\}_{k=1}^L\}_{j=1}^{\iy}$ is Cauchy in $\hi(\rn,\ell^2(L))$, then for each $k$ we have a limit $f_k$ in $\hi$, and $\{f_k\}_{k=1}^L\in \hi(\rn,\ell^2(L))$ since $L$ is finite. If we choose $j$ large enough, we would see that 
$$
\Big\|\sup_{t>0}\Big(\sum_{k=1}^L |((f_k-f_k^{(j)})*\vp_t)(x)|^2\Big)^{\f12}\Big\|_{\wlp}^r\le \sum_{k=1}^L\|f_k-f_k^{(j)}\|^r_{\hi}\le\ep , 
$$ 
thus $\{\{f_k^{(j)}\}_{k=1}^L\}_{j=1}^{\iy}$ converge to $\{f_k\}_{k=1}^L$ in $ \hi(\rn,\ell^2(L))$ as $j\to \nf$.
\end{proof}

Next we   show that Schwartz functions are not dense in 
$\hi$ for $p\ge1$. To realize this, we investigate the decay of 
functions in $L^p$ and $\wlp$ first.

\begin{lemma}\label{LD}
For $g\in L^p(\rn)$, we have 
\begin{equation}\label{GHYTR1}
\lim_{M\rar\infty}\f{|\{|x|\le M:|g(x)|\ge |x|^{-\f n{p}}\}|}{M^n}=0,
\end{equation}
and 
\begin{equation}\label{GHYTR2}
\lim_{\de\rar0}\f{|\{|x|\le\de:|g(x)|\ge |x|^{-\f n{p}}\}|}{\de^n}=0.
\end{equation}

For $g\in \wlp$, we have 
\begin{equation}\label{GHYTR3}
\lim_{M\rar\infty}\f{|\{|x|<M:|g(x)|\ge |x|^{-\f n{p_2}}\}|}{M^n}=0,
\end{equation}
for all $p_2>p$
and 
\begin{equation}\label{GHYTR4}
\lim_{\de\rar0}\f{|\{|x|<\de:|g(x)|\ge |x|^{-\f n{p_1}}\}|}{\de^n}=0
\end{equation}
for all $0<p_1<p$.
\end{lemma}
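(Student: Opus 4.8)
The plan is to derive all four limits from one mechanism: decompose the region over which the measure is computed into dyadic annuli, and on each annulus control the relevant set in two ways --- crudely by the volume of the annulus, and more finely by the size of $g$. I will illustrate on \eqref{GHYTR1}. Setting $v_n=|B(0,1)|$, $E_M=\{|x|\le M:\ |g(x)|\ge|x|^{-n/p}\}$, and $A_k=\{2^{-k-1}M<|x|\le 2^{-k}M\}$ so that $\{0<|x|\le M\}=\bigcup_{k\ge0}A_k$, one observes that on $E_M\cap A_k$ one has $|g(x)|^p\ge|x|^{-n}\ge(2^{-k}M)^{-n}=2^{kn}M^{-n}$, whence Chebyshev gives $|E_M\cap A_k|\le 2^{-kn}M^n\int_{A_k}|g|^p$, while trivially $|E_M\cap A_k|\le|A_k|\le v_n2^{-kn}M^n$. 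Taking the smaller of the two bounds and summing would yield
$$
\frac{|E_M|}{M^n}\le \sum_{k\ge0}2^{-kn}\min\Big(v_n,\ \int_{A_k}|g|^p\Big).
$$
Each summand is dominated by the $M$-independent summable sequence $v_n2^{-kn}$, and for each fixed $k$ we have $A_k\subseteq\{|x|>2^{-k-1}M\}$, so $\int_{A_k}|g|^p\to0$ as $M\to\infty$ by absolute continuity of the integral of $|g|^p\in L^1(\rn)$; splitting the series at a large index $K$ so that $\sum_{k\ge K}v_n2^{-kn}$ is small, we would obtain $|E_M|/M^n\to0$. The proof of \eqref{GHYTR2} would be identical with $M$ replaced by $\delta$ and $M\to\infty$ by $\delta\to0$: the annuli $A_k=\{2^{-k-1}\delta<|x|\le2^{-k}\delta\}$ now shrink to the origin, and $\int_{A_k}|g|^p\le\int_{|x|\le\delta}|g|^p\to0$ as $\delta\to0$.

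For \eqref{GHYTR3} and \eqref{GHYTR4} the only change is to replace the Chebyshev step by the weak-type bound. Taking $q=p_2$ for \eqref{GHYTR3}, with $E_M=\{|x|<M:\ |g(x)|\ge|x|^{-n/q}\}$ and the same annuli, on $E_M\cap A_k$ one has $|g(x)|\ge(2^{-k}M)^{-n/q}$, so
$$
|E_M\cap A_k|\le\big|\{|g|\ge(2^{-k}M)^{-n/q}\}\big|\le\|g\|_{\wlp}^{\,p}\,(2^{-k}M)^{np/q}=\|g\|_{\wlp}^{\,p}\,2^{-knp/q}M^{np/q},
$$
and still $|E_M\cap A_k|\le v_n2^{-kn}M^n$; dividing by $M^n$ and minimizing gives
$$
\frac{|E_M|}{M^n}\le\sum_{k\ge0}\min\Big(v_n2^{-kn},\ \|g\|_{\wlp}^{\,p}\,2^{-knp/q}\,M^{\,n(p/q-1)}\Big).
$$
Because $q=p_2>p$, the exponent $n(p/q-1)$ is negative, so $M^{\,n(p/q-1)}\to0$ as $M\to\infty$; each summand is bounded by $v_n2^{-kn}$ and tends to $0$ for fixed $k$, and the same tail-splitting gives $|E_M|/M^n\to0$. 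For \eqref{GHYTR4} one repeats this with $q=p_1$, and with $\delta$, $\delta\to0$ in place of $M$, $M\to\infty$; now $q=p_1<p$ makes $n(p/q-1)$ positive, so $\delta^{\,n(p/q-1)}\to0$ as $\delta\to0$, and the conclusion follows as before.

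The argument is essentially routine. The only place a hypothesis genuinely bites is the sign of the exponent $n(p/q-1)$, which is why $p_2>p$ is needed in \eqref{GHYTR3} and $p_1<p$ in \eqref{GHYTR4}; these are precisely the ranges for which the method works. The one mild technical point is the interchange of limit and sum, which I would handle by the elementary device above (splitting at a large index $K$ and using that $\sum_{k\ge K}v_n2^{-kn}$ is small uniformly in the parameter) or, equivalently, by the dominated convergence theorem for series. I do not anticipate any deeper obstacle.
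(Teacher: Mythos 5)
Your proof is correct, and it takes a genuinely different route from the paper's. The paper argues by contradiction for the $L^p$ statements: if \eqref{GHYTR1} fails, one extracts a sparse sequence $M_1\ll M_2\ll\cdots$ along which the set $E=\{|g|\ge|x|^{-n/p}\}$ occupies a fixed fraction of each ball $B(0,M_k)$; subtracting off the previous ball shows each annulus $\{M_{k-1}\le|x|\le M_k\}$ carries a comparable portion of $E$, which (since $|g|^p\ge|x|^{-n}$ on $E$) contributes a bounded-below amount $\approx\ln\frac{1}{1-\varepsilon/2}$ to $\int|g|^p$, forcing the integral to diverge. For the weak statements \eqref{GHYTR3}--\eqref{GHYTR4}, the paper then \emph{reduces} to the strong case by cutting $g$ at a level $\alpha$ into a piece in $L^{p_1}$ and a piece in $L^{p_2}$ with $p_1<p<p_2$, and appealing to the first part. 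You instead argue directly: decompose into dyadic annuli, bound $|E\cap A_k|$ by Chebyshev (resp.\ by the weak-type inequality $|\{|g|\ge\lambda\}|\le\|g\|_{\wlp}^p\lambda^{-p}$) as well as trivially by $|A_k|$, divide by $M^n$, and pass to the limit term by term under a summable majorant. Your argument is more uniform (the same decomposition handles all four limits), avoids the contradiction framework and the slightly delicate ``outer shell'' lower bound for $\int_S|x|^{-n}dx$ in the paper's telescoping step, and, as you note, makes transparent that the hypotheses $p_2>p$ and $p_1<p$ enter only through the sign of $n(p/q-1)$; in fact in the weak case your second bound alone, summed geometrically, already gives $|E_M|/M^n\le C\,M^{n(p/q-1)}$, so the $\min$ is not even needed there. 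Both proofs are sound.
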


\begin{proof}
We set $E=\{x:|g(x)|\ge |x|^{-\f n{p}}\}$.
Suppose that \eqref{GHYTR1} failed. Then there exists an $\ep>0$ and
 a sequence $\{M_k\}$ such that $\dis|E\ \cap \{x:1<|x|<M_k\}|>\ep M_k^n$ for all $M_k$; 
 moreover, we can take $M_k$ such that $\ep M_k^n>2v_nM_{k-1}^n$. Observe 
$$|E\cap\{x:M_{k-1}\le|x|\le M_k\}|>\ep M_k^n-v_nM_{k-1}^n>\f{\ep}2M_{k}^n,$$
therefore
\begin{align*}
 \intrn|g(x)|^pdx 
=\,&\,\sum_{k=2}^{\iy}\int_{\{M_{k-1}\le |x|\le M_k\}}|g(x)|^pdx\\
\ge\,&\,\sum_{k=2}^{\iy}\int_{\{M_{k-1}\le |x|\le M_k\}}|x|^{- n}\chi_Edx\\
\ge\,&\,\sum_{k=2}^{\iy}\int_{\{M_{k}-\f{\ep}2M_k\le |x|\le M_k\}}|x|^{-n}dx\\
=\,&\,C_n\sum_{k=2}^{\iy}\ln \f{1}{1-\f{\ep}2} 
= \iy
\end{align*}
This is a contradiction and our claim is true. The proof of \eqref{GHYTR2} is similar.

For an  $\wlp$ function $g$, we   cut it as follows: 
$$
g(x)=h(x)+k(x)=g(x)\chi_{\{|g|>\al\}}+g(x)\chi_{\{|g|\le\al\}},
$$
 where $\al>0$. 
Fix $p_1,\ p_2$ with $p_1<p<p_2$, then $h\in L^{p_1}$ and $k\in L^{p_2}$. Then 
\begin{align*}
\,&\,\lim_{M\rar\iy}\f{|\{|x|<M:g>|x|^{-\f n{p_2}}\}|}{M^n}\\
\le\,&\,\lim_{M\rar\iy}\f{|\{1<|x|<M:2|h|>|x|^{-\f n{p_2}}\}|}{M^n}+\lim_{M\rar\iy}\f{|\{1<|x|<M:2|k|>|x|^{-\f n{p_2}}\}|}{M^n}\\
\le \,&\,\lim_{M\rar\iy}\f{|\{1<x<M:2|h|>x^{-\f n{p_1}}\}|}{M^n}+\lim_{M\rar\iy}\f{|\{1<x<M:2|k|>|x|^{-\f n{p_2}}\}|}{M^n}\\
=\,&\,0.
\end{align*}
This proves \eqref{GHYTR3} and \eqref{GHYTR4} can be proved in a similar way.

\end{proof}

\begin{theorem}\label{RP}
  $L^r$ is not dense in $\wlp$, whenever $0<r\le\iy$ and $0<p<\iy$. 
\end{theorem}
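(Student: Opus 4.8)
The plan is to show that if $f\in L^r$ approximates a given $g\in \wlp$ well in the $\wlp$ quasinorm, then $g$ inherits the decay rates of $L^r$ functions established in Lemma \ref{LD}, and these decay rates are incompatible with $g$ being a ``generic'' $\wlp$ function. Concretely, I would fix a specific $g_0\in \wlp$, for instance $g_0(x)=|x|^{-n/p}$, which lies in $\wlp\setminus L^r$ for every $r$, and argue that no sequence in $L^r$ can converge to $g_0$ in $\wlp$.

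First I would record the stability of the decay conclusions of Lemma \ref{LD} under small $\wlp$ perturbations. If $f\in L^r$ and $\|g_0-f\|_{\wlp}$ is small, write $g_0=f+(g_0-f)$; on the set where $|g_0(x)|\ge |x|^{-n/p}$ we have either $|f(x)|\ge \tfrac12|x|^{-n/p}$ or $|g_0(x)-f(x)|\ge \tfrac12|x|^{-n/p}$. The first set is controlled, after shrinking the exponent from $p$ to some $p_1<r\le \infty$ (using that $L^r$-decay via \eqref{GHYTR1}/\eqref{GHYTR2} is even stronger when $r>p$, and in the range $r\le p$ one uses the appropriate inclusion as in the proof of \eqref{GHYTR3}), so that its density in balls of radius $M$ tends to $0$ as $M\to\infty$ and likewise near the origin. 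The second set has measure bounded by the weak $L^p$ quasinorm: $|\{|g_0-f|\ge \tfrac12|x|^{-n/p}\}\cap\{|x|\le M\}|$, split into dyadic annuli $2^{k}\le |x|< 2^{k+1}$, is at most $\sum_k (2\cdot 2^{(k+1)n/p})^p \|g_0-f\|_{\wlp}^p \lesssim M^n \|g_0-f\|_{\wlp}^p$, so its density is at most a constant times $\|g_0-f\|_{\wlp}^p$.

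Then I would derive a contradiction by computing the left side directly for $g_0$. For $g_0(x)=|x|^{-n/p}$ we have $\{x: |g_0(x)|\ge |x|^{-n/p}\}=\rn$, so $\tfrac{|\{|x|\le M: |g_0(x)|\ge|x|^{-n/p}\}|}{M^n}=v_n$ for all $M$, and the same holds near the origin. Combining with the previous paragraph, $v_n \le o(1) + C\|g_0-f\|_{\wlp}^p$ as $M\to\infty$, so choosing $\|g_0-f\|_{\wlp}^p < v_n/(2C)$ forces $v_n \le v_n/2$, a contradiction. Hence $g_0$ cannot be approximated in $\wlp$ by $L^r$ functions, so $L^r$ is not dense in $\wlp$. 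A symmetric argument using \eqref{GHYTR2} and \eqref{GHYTR4} and the behavior near the origin gives an alternative route if one prefers a compactly supported counterexample; either suffices.

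The main obstacle is bookkeeping the exponents correctly across the two regimes $r>p$ and $r\le p$: when $r>p$ one applies \eqref{GHYTR1} with exponent $p$ directly to $f$, but when $r\le p$ one must first pass through an inclusion (as in the proof of \eqref{GHYTR3}, writing $f$ as a sum of an $L^{p_1}$ and an $L^{p_2}$ piece with $p_1<p<p_2$, or simply noting $|x|^{-n/p}\le |x|^{-n/p_1}$ for $|x|\ge 1$ and $\le |x|^{-n/p_2}$ for $|x|\le 1$) so that Lemma \ref{LD} applies. I would organize the argument so that the single inequality ``density of $\{|g_0|\ge|x|^{-n/p}\}$ in large balls $\le$ (vanishing $L^r$ term) $+$ $C\|g_0-f\|_{\wlp}^p$'' is proved once and then contradicted, keeping the case distinction confined to one lemma-style citation of \ref{LD}.
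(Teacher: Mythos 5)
Your plan is attractive because it tries to prove the theorem with a single target $g_0(x)=|x|^{-n/p}$ and one appeal to Lemma \ref{LD}, whereas the paper's proof splits into two structurally different arguments (for $r\ge p$ it uses boundedness of step functions near the singularity of $x^{-1}\chi_{(0,1)}$, and for $r<p$ it uses the decay lemma applied to $x^{-1}\chi_{(1,\infty)}$). However, your unified argument has a genuine gap in the regime $r>p$, and the surrounding ``bookkeeping'' commentary makes it clear the gap is a real confusion rather than a typo.

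The problem is the claim that for $f\in L^r$ the density of $\{|f|\ge \tfrac12|x|^{-n/p}\}$ in $\{|x|\le M\}$ tends to $0$ as $M\to\infty$, ``likewise near the origin.'' This is false at infinity when $r>p$. The function $f_0(x)=|x|^{-n/p}\chi_{\{|x|>1\}}$ lies in $L^r$ for every $r>p$, yet $\{|f_0|\ge \tfrac12|x|^{-n/p}\}\supset\{|x|>1\}$, whose density in $\{|x|\le M\}$ tends to $v_n$, not $0$. The reason your appeal to Lemma \ref{LD} breaks down is the direction of the threshold comparison: for $f\in L^r$, \eqref{GHYTR1} controls $\{|f|\ge|x|^{-n/r}\}$, and on $\{|x|>1\}$ the inclusion $\{|f|\ge\tfrac12|x|^{-n/p}\}\subset\{|f|\ge|x|^{-n/r}\}$ holds precisely when $n/p\ge n/r$, i.e.\ $r\le p$. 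For $r>p$ the inclusion reverses and the lemma gives you nothing at infinity. Relatedly, the sentence ``when $r>p$ one applies \eqref{GHYTR1} with exponent $p$ directly to $f$'' is not licensed: $f\in L^r$ with $r>p$ need not belong to $L^p$, so \eqref{GHYTR1} with exponent $p$ does not apply to $f$; the counterexample $f_0$ above is again the obstruction.

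The fix is available but is not the one you describe. The two regimes of Lemma \ref{LD} are not interchangeable: the at-infinity estimate \eqref{GHYTR1} is the one compatible with $r\le p$ (because then $|x|^{-n/p}\ge|x|^{-n/r}$ on $\{|x|>1\}$, so the needed inclusion goes the right way), while the near-origin estimate \eqref{GHYTR2} is the one compatible with $r\ge p$ (because then $|x|^{-n/p}\ge|x|^{-n/r}$ on $\{|x|<1\}$); for $r=\infty$ the near-origin statement is trivial since a bounded function cannot exceed $\tfrac12|x|^{-n/p}$ close to $0$. So ``either suffices'' is not correct as stated; you must choose the correct side of the dichotomy depending on the sign of $r-p$ (you may take $g_0=|x|^{-n/p}\chi_{\{|x|>1\}}$ for $r\le p$ and $g_0=|x|^{-n/p}\chi_{\{|x|<1\}}$ for $r\ge p$, or keep the single $g_0$ and use the appropriate half of the density estimate). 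Once the inclusion directions are straightened out, the rest of your argument—the dyadic-annulus bound $|\{|g_0-f|\ge\tfrac12|x|^{-n/p}\}\cap\{|x|\le M\}|\lesssim M^n\|g_0-f\|_{\wlp}^p$ (and its mirror near $0$), followed by the contradiction $v_n\le o(1)+C\|g_0-f\|_{\wlp}^p$—is sound and in fact gives a cleaner and genuinely more uniform proof than the paper's, which handles $r\ge1$ via a separate step-function argument.
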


\begin{proof}
For simplicity we  restrict ourselves to the case $p=1$ and $n=1$, and we note that the proof in this case contains the general idea. 

For the case $r\ge1=p$, we don't need the decay we proved in Lemma \ref{LD}.
We will prove a stronger result: $L^1_{\loc}$ is not dense in $L^{1,\iy}$. Suppose not, then in $\wlo$ the function $ f(x)=\tf{1}{x}\chi_{(0,1)}\in L^{1,\iy}$ is
in the closure of $L^1_{\loc}$. The fact that $\|f-g\|_{\wlo}\ge\|f-g\chi_{[0,1]}\|_{\wlo}$ for all $g\in L^1_{\loc}$ 
suggests us that $f$ is also in the closure of $L^1_{\loc}[0,1]=L^1[0,1]$. Moreover step functions are dense in $L^1$ and $\|h\|_{\wlo}\le\|h\|_{L^1}$, 
so $f$ is also in the closure of set of step functions defined on $[0,1]$. But such a step function $g$ must be bounded by $M$, then 
$$
\|f-g\|_{\wlo}=\|(f-M)\chi_{(0,\tf1{2M})}\|_{\wlo}\ge\f12.
$$
This contradiction shows that $L^1_{\loc}$ is not dense in $\wlo$. In particular, $L^r$ is not dense in $\wlo$ if $r\in[1,\iy]$.

If $1=p>r$, then we can take $f=\f{1}{x}\chi_{(1,\iy)}\in \wlo$ with $\|f\|_{\wlo}=1$. 
Choose $M$ large enough so that $x^{-1}-x^{-\tf1r}>\f12x^{-1}$ for $x\ge M$. Fix any $g\in L^r$ and by Lemma \ref{LD} 
we can choose $M'>10 M$ such that
$$
\f{|\{1<x<M':|g(x)|>x^{-\f1r}\}|}{M'}\le\f12,
$$
therefore we can estimate the difference of $f$ and $g$ as  follows:
\begin{align*}
\|f-g\|_{\wlp}\ge&\|(|f|-|g|)\chi_{(M,M')}\|_{\wlp}\\
=&\sup_{\al>0}|\{M'>x>M:|g(x)|\le x^{-\f1r},\ x^{-1}-x^{-\f1r}>\al\}|^{\f1p}\al\\
\ge&\sup_{\al>0}|\{M'>x>M:|g(x)|\le x^{-\f1r},\ \f12x^{-1}>\al\}|^{\f1p}\al\\
\ge&(\tf25M')\al_0 
= \tf1{10}
\end{align*}
where we took $\f{{M'}^{-1}}{2}>\al_0>\f{{M'}^{-1}}{4}$.
\end{proof}

We want to take the more general result we proved as a corollary.
\begin{corollary}
$L^p_{loc}\cap \wlp$ is not dense in $\wlp$.
\end{corollary}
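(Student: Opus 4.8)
The plan is to produce a single function $f\in\wlp$ whose $\wlp$-distance to all of $L^p_{\loc}(\rn)\cap\wlp$ is bounded below by a positive constant depending only on $n$ and $p$; this is in the same spirit as the case $r\ge1=p$ of the proof of Theorem \ref{RP}. I would take the borderline function
\[
f(x)=|x|^{-n/p}\chi_{B(0,1)}(x),
\]
the natural higher-dimensional analogue of $\tf1x\chi_{(0,1)}$. A direct computation of the distribution function gives $\|f\|_{\wlp}=v_n^{1/p}$, so $f\in\wlp$, while $\int_{B(0,1)}|f(x)|^p\,dx=\int_{B(0,1)}|x|^{-n}\,dx=\iy$, so $f\notin L^p_{\loc}$.

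Next, fix an arbitrary $g\in L^p_{\loc}(\rn)\cap\wlp$. Since $g\chi_{B(0,1)}\in L^p$, the absolute continuity of the Lebesgue integral gives $\int_{B(0,\de)}|g|^p\to0$ as $\de\to0$, and hence, by Chebyshev's inequality applied on the ball $B(0,\de)$,
\[
\f{1}{\de^n}\Big|\big\{x\in B(0,\de):|g(x)|>\tf12|x|^{-n/p}\big\}\Big|\le\f{1}{\de^n}\Big|\big\{x\in B(0,\de):|g(x)|>\tf12\de^{-n/p}\big\}\Big|\le 2^p\!\!\int_{B(0,\de)}\!|g|^p\longrightarrow0.
\]
(Alternatively, the same decay follows directly from \eqref{GHYTR2} of Lemma \ref{LD} after a dilation to absorb the factor $\tf12$.) Consequently, for $\de>0$ small enough the set $G_\de=\{x\in B(0,\de):|g(x)|\le\tf12|x|^{-n/p}\}$ satisfies $|G_\de|\ge\tf12 v_n\de^n$, and on $G_\de$ one has $|f(x)-g(x)|\ge|x|^{-n/p}-\tf12|x|^{-n/p}=\tf12|x|^{-n/p}\ge\tf12\de^{-n/p}$.

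Therefore $\{|f-g|>\tf14\de^{-n/p}\}\supseteq G_\de$, so
\[
\|f-g\|_{\wlp}\ge\tf14\de^{-n/p}\big(\tf12 v_n\de^n\big)^{1/p}=\tf14\big(\tf{v_n}{2}\big)^{1/p}=:c_{n,p}>0,
\]
independently of $g$. Since every element of $L^p_{\loc}(\rn)\cap\wlp$ stays at $\wlp$-distance at least $c_{n,p}$ from $f$, no sequence in $L^p_{\loc}(\rn)\cap\wlp$ converges to $f$ in $\wlp$, and so this set is not dense. The only mildly technical point is justifying the decay of the sublevel sets of $|g|$ near the origin with the constant $\tf12$ rather than $1$, which is precisely what the displayed Chebyshev estimate (or the rescaled form of Lemma \ref{LD}) supplies; the remaining steps are elementary computations of distribution functions.
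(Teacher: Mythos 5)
Your proof is correct, and it is in fact the approach the paper itself suggests in the remark immediately following the corollary: take the dilation-invariant profile $|x|^{-n/p}$ truncated near the origin and compare it to an arbitrary $L^p_{\loc}$ function using the local decay from Lemma \ref{LD}. The paper only states the corollary and points to this route without giving details; you supply the full argument, working in $\rn$ with $f(x)=|x|^{-n/p}\chi_{B(0,1)}(x)$ and re-deriving the needed decay directly from Chebyshev's inequality applied to $g\chi_{B(0,1)}\in L^p$ (which is just the proof mechanism behind \eqref{GHYTR2} of Lemma \ref{LD}), rather than citing the lemma and rescaling. The computation of $\|f\|_{\wlp}=v_n^{1/p}$, the inclusion $\{|g|>\tfrac12|x|^{-n/p}\}\cap B(0,\de)\subseteq\{|g|>\tfrac12\de^{-n/p}\}\cap B(0,\de)$, and the lower bound $\|f-g\|_{\wlp}\ge \tfrac14(v_n/2)^{1/p}$ via the good set $G_\de$ are all checked correctly. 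One could sharpen the constant slightly by taking the threshold $\la$ just below $\tfrac12\de^{-n/p}$ instead of $\tfrac14\de^{-n/p}$, but this is cosmetic and does not affect the conclusion.
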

We also want to remark that to prove this corollary we can also use the decay we proved in Lemma \ref{LD}. More concretely we can consider the distance of 
$f(x)=x^{-1/p}\chi_{(0,1)}$ and any $L^p_{\loc}$ function.

We have showed   that Schwartz functions are not dense in $H^{p,\infty}$ for $p>1$ since $\wlp=\hi$. 
Unlike the situation for strong Hardy spaces where $H^1\subset L^1$, the distribution $ \delta_1-\delta_{-1}$ is in $H^{1,\iy}$ but not in $L^{1,\iy}$. This 
shows that $H^{1,\iy}$ is not a subspace of $L^{1,\iy}$. 
Therefore we need a different method to show that Schwartz functions are not dense in $H^{1,\iy}$. Actually   
the preceding distribution   can be used to 
show this.

\begin{theorem}\label{SD}
The space of Schwartz functions $\mcs$ is not dense in $H^{1,\iy}$. 
Moreover $L^1$ is not dense in $H^{1,\nf}$.
\end{theorem}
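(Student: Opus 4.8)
The plan is to exhibit a single distribution $T \in H^{1,\nf}$ that has a fixed positive distance from the subspace $L^1$ (hence also from the smaller subspace $\mcs$), so that neither $L^1$ nor $\mcs$ can be dense. The natural candidate, foreshadowed in the text, is $T = \delta_1 - \delta_{-1}$. First I would verify that $T \in H^{1,\nf}$: compute the Poisson (or smooth) maximal function of $T$, which is $\sup_{t>0}|P_t(x-1) - P_t(x+1)|$, and check that this lies in $L^{1,\nf}(\rone)$. Near $x = \pm 1$ the maximal function behaves like $|x\mp1|^{-1}$ (a single Dirac mass has Poisson maximal function comparable to $|x|^{-n}$, which is exactly on the boundary of $L^{1,\nf}$ but not in $L^1$), while the cancellation between the two masses gives extra decay $\sim |x|^{-2}$ at infinity, so the whole function is in $L^{1,\nf}$; this shows $\|T\|_{H^{1,\nf}} < \infty$.

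Next I would show $\mathrm{dist}_{H^{1,\nf}}(T, L^1) \ge c > 0$. Fix any $g \in L^1(\rone)$. The idea is that $g$, being an $L^1$ function, has small mass near the points $x = 1$ and $x = -1$ on a suitable small scale, whereas $T - g$ still carries the singular behavior of $T$ there, and this forces the smooth maximal function of $T-g$ to be large on a set of controlled measure. More precisely, by absolute continuity of the integral, for every $\eta > 0$ there is $r_0 > 0$ with $\int_{|x-1|<r} |g| + \int_{|x+1|<r}|g| < \eta$ for all $r \le r_0$. On such a ball around $x=1$, a lower bound for $M(T-g;\Phi)$ can be obtained by testing against $\Phi_t$ with $t \approx r$: the contribution of $\delta_1$ to $(\Phi_t * (T-g))(x)$ is of order $t^{-1}$ near $x=1$, the contribution of $\delta_{-1}$ is negligible at that scale, and the contribution of $g$ is, by the $L^1$ smallness, too small to cancel the main term on most of the ball. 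Quantitatively, one gets $|\{x : M(T-g;\Phi)(x) > \lambda\}| \gtrsim \lambda^{-1}$ for a range of $\lambda$ independent of $g$, whence $\|T - g\|_{H^{1,\nf}} = \|M(T-g;P)\|_{L^{1,\nf}} \gtrsim \|M(T-g;\Phi)\|_{L^{1,\nf}} \ge c$, using Theorem \ref{t6.4.13} to pass between the Poisson and smooth maximal characterizations.

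For the $L^1$-density statement the argument is essentially the same computation, since $\mcs \subset L^1$, so proving $\mathrm{dist}(T,L^1)>0$ gives both assertions at once; alternatively one can note that since $\delta_1 - \delta_{-1}$ is not represented by an $L^1_{\loc}$ function while every element of $L^1$ is, one needs the maximal-function estimate anyway to make ``distance'' meaningful in the $H^{1,\nf}$ quasi-norm rather than merely pointwise. One subtlety to handle carefully is that $H^{1,\nf}$ is only a quasi-normed space, so ``dense'' must be interpreted with the quasi-norm and the triangle inequality replaced by $\|f+g\|_{H^{1,\nf}}^p \le 2^p(\|f\|^p + \|g\|^p)$ from Proposition \ref{norm} with $p=1$; this does not affect the scheme, it only changes constants.

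The main obstacle I anticipate is the quantitative lower bound $\|T - g\|_{H^{1,\nf}} \ge c$ uniformly over $g \in L^1$: one must choose the scale $r$ (and the test-function dilation $t$) depending on $g$ through its local integrability modulus, then carefully separate the three contributions (the ``good'' Dirac $\delta_1$, the ``far'' Dirac $\delta_{-1}$, and the error $g$) so that the $L^1$-smallness of $g$ on the small ball genuinely defeats any cancellation it could provide against the $t^{-1}$ singularity — this is where a naive estimate can fail if $g$ is allowed to concentrate, so one should work on the slightly larger set where $g$ is below a threshold like $c\,t^{-1}$ and use that this set fills most of the ball by Chebyshev. Everything else — membership $T \in H^{1,\nf}$ and the passage between maximal functions — is routine given the background results already established.
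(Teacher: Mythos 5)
Your proposal is correct and reaches the same conclusion with the same extremal distribution $\delta_1-\delta_{-1}$, but the treatment of $L^1$ is genuinely different from the paper's. The paper first disposes of $\mcs$ by exploiting the fact that $\|\varphi^+\|_{L^\infty}<\infty$ for any fixed Schwartz $\varphi$: near $x=1$ the maximal function $f^+$ of $\delta_1-\delta_{-1}$ behaves like $(x-1)^{-1}$, and subtracting the bounded $\varphi^+$ does not change the leading singularity, so $\|(f-\varphi)^+\|_{L^{1,\infty}}\gtrsim 1$ with a constant independent of $\varphi$ (the constant $\|\varphi^+\|_{L^\infty}$ drops out in the $\sup_{\alpha\to\infty}$). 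It then deduces non-density of $L^1$ from that of $\mcs$ via the density of $\mcs$ in $L^1$, the weak $(1,1)$ bound $\|g\|_{H^{1,\infty}}\lesssim\|g\|_{L^1}$, and the quasi-triangle inequality of Proposition \ref{norm}. You instead attack $L^1$ directly, using absolute continuity of the integral to choose a $g$-dependent scale $t_0$ at which $\int_{|y-1|<2t_0}|g|$ is small, and then testing $T-g$ against a compactly supported bump $\Phi_{t_0}$ to get $M(T-g;\Phi)\gtrsim t_0^{-1}$ on a set of measure $\gtrsim t_0$; the Schwartz case then comes for free since $\mcs\subset L^1$. Your route is more self-contained and unified, while the paper's is shorter once the $\mcs$ case is in hand; both buy essentially the same thing. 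Two small remarks: (i) the Chebyshev step you anticipate is actually unnecessary once $\Phi$ is taken compactly supported, since then $|(\Phi_{t_0}*g)(x)|\le\|\Phi\|_\infty t_0^{-1}\int_{|y-1|<2t_0}|g|$ pointwise for all $x$ near $1$, so the local $L^1$ smallness defeats cancellation directly without needing to discard a small exceptional set; (ii) the remark that a single Dirac mass sits ``on the boundary of $L^{1,\infty}$'' is slightly misleading — $\sup_t P_t(x)\approx|x|^{-1}$ is honestly in $L^{1,\infty}(\mathbf R)$, so $\delta_1$ alone would already serve, and the cancellation from $\delta_{-1}$ (which gives $|x|^{-2}$ decay at infinity) is cosmetic rather than essential here.
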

\begin{proof}
We will provide a constructive proof. More concretely, 
we will show that for any $\vp\in\mcs$, $\|f-\vp\|_{H^{1,\nf}}\ge
\f1{20}$, 
where $f=\de_1-\de_{-1}$. An easy calculation shows that 
$$
f^+(x)=\sup_{t>0}|(\psi_t*f)(x)|\approx\left\{
\begin{array}{ll}
\f{|x|}{|x-1|^2|x+1|}&x<0,\\
\\
\f{|x|}{|x+1|^2|x-1|}&x>0.
\end{array}\right.
$$
By the symmetry of this function, we can consider only the part $x>0$. So $f^+(x)=\f{x}{(x+1)^2(1-x)}$ for $0<x<1$ and $\f{x}{(x+1)^2(x-1)}$ for $x>1$. 
If we cut this function in  two parts, $f^+(x)=g(x)+h(x)$ with $g(x)=f^+(x)\chi_{|x-1|<a}+f^+(x)\chi_{x>b}$ for fixed small $a$ and large $b$, say $\f12$ and $100$,
 then $h$ is bounded and compactly supported and lies in any $L^{p,\iy}$. 
 
 Now we consider $g(x)$. If $\al$ is large, then 
 $$
 |\{x:g(x)>\al\}|\le |\{x>0:\,\, \tf89\tf1{|1-x|}>\al\}|\le\tf{16}{9\al},
 $$
 therefore $ \sup_{\al>\f1{1000}}\al^p\f{16}{9\al}<\iy$ for all $p\le1$. Meanwhile, if $\al$ is small, then 
$$
|\{x:g(x)>\al\}|\le 1+|\{x>100:\,\, \tf2{x^2}>\al\}|\le\sqrt{\tf{2}{\al}}-99,
$$
and we have $ \sup_{\al\le\f{1}{1000}}\al^p(\sqrt{ {2}/{\al}}-99)<\iy$ for $p\ge\f12$. 
In conclusion, $g\in L^{p,\iy}$ for $p\in[\f12,1]$ and $f\in \hi$ for the same range of $p$'s.

Next, we   show that 
$\|f-\vp\|_{H^{1,\nf}}>\f1{20}$. We achieve this via the estimate
$$
\|g-\vp^+(\chi_{|x-1|<a}+\chi_{x>b})\|_{\wlp}\le\|f^+-\vp^+\|_{\wlp}\le\|f-\vp\|_{\hi}.
$$

For $1\le x\le\f32$, we have $\f{x}{(x+1)^2(1-x)}>\f{4}{25}\f{1}{(x-1)}$. So $\|\vp^+\|_{L^{\iy}}\le C$, and thus  
$$
 \|g-\vp^+(\chi_{|x-1|<a}+\chi_{x>b})\|_{L^{1,\iy}} 
\ge \sup_{\al>100}\al|\{x\in(1,\tf32):\tf{4}{25(x-1)}-C>\al\}|>\f2{25}.
$$
This proves the required claim 
for Schwartz functions.  

For functions in  $L^1$ we argue as follows.
Keep $f=\de_1-\de_{-1}$ as before and let $g$ be any $L^1$ function. Actually $g$ is also in $H^{1,\nf}$
since
$$\|g\|_{H^{1,\nf}}=\|M(g;P)\|_{L^{1,\nf}}
\le \|M(g)\|_{L^{1,\nf}}\le C\|g\|_{L^1},$$
where $M$ is the Hardy-Littlewood maximal operator.
Now
for any given $\ep>0$,
 we can choose 
a Schwartz function $\vp$ such that $\|\vp-g\|_{L^1}\le \ep$.
Apply the previous discussion we deduce that
$$\|\vp-g\|_{H^{1,\nf}}
\le C\ep.$$
By Proposition \ref{norm}, 
$\|f-\vp\|_{H^{1,\nf}}\le 2(\|f-g\|_{H^{1,\nf}}+\|g-\vp\|_{H^{1,\nf}})$, 
but $\|f-\vp\|_{H^{1,\nf}}\ge
\f1{20}$, therefore
$$\|f-g\|_{H^{1,\nf}}\ge\tf12\|f-\vp\|_{H^{1,\nf}}-\|g-\vp\|_{H^{1,\nf}}\ge 1/40-C\ep\ge 1/80$$
if we choose $\ep$ small enough.
\end{proof}

\section{Two interpolation results}\label{tool}\label{Int}

The following version of the classical Fefferman-Stein vector-valued inequality \cite{FSvv} will be 
necessary in our work. 
This result for upper Boyd indices $r$ of $L^{p,r}$, which is less than $\nf$, is contained in \cite{CUMP} (page 85) 
and here we provide the proof for the case $r=\nf$  not contained in \cite{CUMP}.

\begin{proposition}\label{mf}
 If $1<p,q<\nf $, then for all sequences of functions $\{f_j\}_j$ in $L^{p,\iy}(\ell^q)$ we have 
$$
\left\|\|\{M(f_j)\}\|_{\ell^q}\right\|_{L^{p,\iy}}\le C_{p,q}\left\|\|\{f_j\}\|_{\ell^q}\right\|_{L^{p,\iy}},
$$
where $M$ is the Hardy-Littlewood maximal function.
\end{proposition}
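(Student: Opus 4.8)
The plan is to reduce this endpoint weak-type bound to the classical \emph{strong-type} vector-valued maximal inequality of Fefferman and Stein \cite{FSvv}: for every $1<s<\nf$ and $1<q<\nf$ the sublinear operator $\{g_j\}_j\mapsto\big\|\{M(g_j)\}_j\big\|_{\ell^q}$ maps $L^{s}(\rn,\ell^q)$ boundedly into $L^{s}(\rn)$ (see also \cite{CUMP}). First I would fix auxiliary exponents $1<p_0<p<p_1<\nf$; this is possible precisely because $p>1$, which also guarantees that $F:=\big\|\{f_j\}_j\big\|_{\ell^q}\in \wlp\subset L^1_{\loc}$, so that each $M(f_j)$ makes sense. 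The argument is then the Marcinkiewicz splitting mechanism, with the one crucial proviso that the truncation be performed at the level of the \emph{scalar} function $F$.

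Fix $\la>0$. Normalizing so that $\|F\|_{\wlp}=1$ (hence $|\{F>\al\}|\le\al^{-p}$ for all $\al>0$), I would split each $f_j$ using the \emph{single} set $\{F\le\la\}$, common to all $j$:
\[
f_j=f_j\,\chi_{\{F\le\la\}}+f_j\,\chi_{\{F>\la\}}=:f_j^{(0)}+f_j^{(1)}.
\]
The reason for cutting with $\{F\le\la\}$ (rather than with the individual sets $\{|f_j|\le\la\}$) is that then $\big\|\{f_j^{(0)}(x)\}\big\|_{\ell^q}=F(x)\chi_{\{F\le\la\}}(x)$ and $\big\|\{f_j^{(1)}(x)\}\big\|_{\ell^q}=F(x)\chi_{\{F>\la\}}(x)$, so the usual scalar distribution-function estimates transfer verbatim; a routine integration of the distribution function of $F$ then gives
\[
\big\|\{f_j^{(0)}\}\big\|_{L^{p_1}(\rn,\ell^q)}^{p_1}=\big\|F\chi_{\{F\le\la\}}\big\|_{L^{p_1}}^{p_1}\le \tf{p_1}{p_1-p}\,\la^{\,p_1-p},
\]
\[
\big\|\{f_j^{(1)}\}\big\|_{L^{p_0}(\rn,\ell^q)}^{p_0}=\big\|F\chi_{\{F>\la\}}\big\|_{L^{p_0}}^{p_0}\le \tf{p}{p-p_0}\,\la^{\,p_0-p},
\]
where the first integral converges because $p_1>p$ and the second because $p_0<p$.

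Next, from the subadditivity of $M$ and the triangle inequality in $\ell^q$ one obtains the pointwise bound $\big\|\{M(f_j)\}\big\|_{\ell^q}\le\big\|\{M(f_j^{(0)})\}\big\|_{\ell^q}+\big\|\{M(f_j^{(1)})\}\big\|_{\ell^q}$, hence
\[
\big\{\big\|\{M(f_j)\}\big\|_{\ell^q}>2\la\big\}\subseteq\big\{\big\|\{M(f_j^{(0)})\}\big\|_{\ell^q}>\la\big\}\cup\big\{\big\|\{M(f_j^{(1)})\}\big\|_{\ell^q}>\la\big\}.
\]
Applying Chebyshev's inequality to each of the two pieces together with the strong $(p_1,p_1)$ and $(p_0,p_0)$ vector-valued bounds, and inserting the two estimates above, I obtain
\[
\Big|\big\{\big\|\{M(f_j)\}\big\|_{\ell^q}>2\la\big\}\Big|\le \la^{-p_1}C_{p_1,q}^{p_1}\big\|\{f_j^{(0)}\}\big\|_{L^{p_1}(\ell^q)}^{p_1}+\la^{-p_0}C_{p_0,q}^{p_0}\big\|\{f_j^{(1)}\}\big\|_{L^{p_0}(\ell^q)}^{p_0}\le C_{p,q}\,\la^{-p}.
\]
Since $\la>0$ was arbitrary, this is exactly $\big\|\,\|\{M(f_j)\}\|_{\ell^q}\,\big\|_{\wlp}\le C_{p,q}$ under the normalization $\|F\|_{\wlp}=1$; rescaling finishes the proof.

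The only point requiring genuine care is the one flagged above: the truncation must use the common set $\{F\le\la\}$ so that the $\ell^q$-norm of the truncated sequence equals the truncation of the scalar function $F$, which is what lets all the scalar Lorentz-space bookkeeping go through unchanged; the rest is the standard Marcinkiewicz machinery. I should add that, equivalently, the whole argument can be packaged as real interpolation: the operator is sublinear and bounded $L^{p_0}(\rn,\ell^q)\to L^{p_0}(\rn)$ and $L^{p_1}(\rn,\ell^q)\to L^{p_1}(\rn)$, hence bounded from $\big(L^{p_0}(\ell^q),L^{p_1}(\ell^q)\big)_{\theta,\nf}=\wlp(\rn,\ell^q)$ to $\big(L^{p_0},L^{p_1}\big)_{\theta,\nf}=\wlp(\rn)$, where $\tf1p=\tf{1-\theta}{p_0}+\tf{\theta}{p_1}$.
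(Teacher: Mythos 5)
Your proposal is correct and follows essentially the same route as the paper: both decompose the sequence by the single scalar level set of $F=\big\|\{f_j\}\big\|_{\ell^q}$, apply the strong-type Fefferman--Stein bounds at two exponents straddling $p$, and finish with Chebyshev. The only cosmetic difference is that the paper sets $\al=\ga\la$ with an explicitly optimized $\ga$, whereas you normalize $\|F\|_{\wlp}=1$ and take $\al=\la$; these are equivalent.
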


\begin{proof}
We   know  that $\left\|\|\{M(f_j)\}\|_{\ell^q}\right\|_{\lp}\le C_{p,q}\left\|\|\{f_j\}\|_{\ell^q}\right\|_{\lp}$ for $1<p,q<\nf $, see \cite{FSvv}. Now fix $q$ and take $1<p_1<p<p_2$. Set 
$\vec F=\{f_j\}$    and $|\vec F|= (\sum_{j\in\zzz}|f_j|^q)^{\f1q}$. We   split $\vec F$ at the height $\al>0$, and define $\vec F_{\al}=\vec F\chi_{|\vec F|>\al}$ 
and $\vec F^{\al}=\vec F-\vec F_{\al}=\vec F\chi_{|\vec F|\le \al}$.

It is easy to verify that 
$$
|\{|\vec F_{\al}|>\la\}|=
\left\{
\begin{array}{ll}
d_{\vec F}(\la)&\la>\al\\
d_{\vec F}(\al)&\la\le\al
\end{array}
\right.
$$
and
$$
|\{|\vec F^{\al}|>\la\}|=
\left\{
\begin{array}{ll}
0&\la>\al\\
d_{\vec F}(\la)-d_{\vec F}(\al)&\la\le\al
\end{array}
\right.,
$$
where $d_{\vec F}(\la)=|\{|\vec F|>\la\}|$. Consequently, 
$
\left\||\vec F_{\al}|\right\|_{L^{p_1}}^{p_1}
\le \f{p}{p-p_1}\al^{p_1-p}\left\||\vec F|\right\|_{\wlp}^p
$
and 
$
\left\||\vec F^{\al}|\right\|_{L^{p_2}}^{p_2}
\le \f{p_2}{p_2-p}\al^{p_2-p}\left\||\vec F|\right\|_{\wlp}^p-d_{\vec F}(\al)\al^{p_2}.
$
For each $j$, split $f_j=f_j\chi_{|\vec F|>\al}+f_j\chi_{|\vec F|\le\al}$. Then we have 
\begin{align*}
&\left|\{\|\{M(f_j)\}\|_{\ell^q}>\la\}\right|\\
\le&|\{\|\{M(f_j)\chi_{|\vec F|>\al}\}\|_{\ell^q}>\tf{\la}{2}\}|+|\{\|\{M(f_j)\chi_{|\vec F|\le\al}\}\|_{\ell^q}>\tf{\la}{2}\}|\\
\le&C(p_1,q)(\tf2\la)^{p_1}\int_{\rn}\|\{f_j\chi_{|\vec F|>\al}(x)\}\|^{p_1}_{l^q}dx+C(p_2,q)(\tf2\la)^{p_2}\int_{\rn}\|\{f_j\chi_{|\vec F|\le\al}(x)\}\|^{p_2}_{\ell^q}dx\\
\le&C(p_1,q)(\tf2\la)^{p_1}\tf{p}{p-p_1}\al^{p_1-p}\left\||\vec F|\right\|_{\wlp}^p+C(p_2,q)(\tf2\la)^{p_2}\tf{p_2}{p_2-p}\al^{p_2-p}\left\||\vec F|\right\|_{\wlp}^p\\
\le& \big(\f{p}{p-p_1}+\f{p_2}{p_2-p}\big)2^{p_2}C(p_1,q)^{\tf{p_2-p}{p_2-p_1}}C(p_2,q)^{\tf{p-p_1}{p_2-p_1}}\la^{-p}\left\|\vec F\right\|_{\wlp}^p
\end{align*}
where we set $\al=\la\ga$, where $\ga=(\tf{C(p_1,q)}{C(p_2,q)})^{\tf1{p_2-p_1}}$.
\end{proof}

\smallskip


Next we have the following result which has a lot of applications in this work. The scalar version of this theorem
has been proved in \cite{FRS}, but it is incomplete in the case $q=\iy$ due to the fact that Schwartz functions are not  dense in $\hi$; this is shown in 
Theorem \ref{RP} and Theorem \ref{SD}. Here we complete this gap  pushing further the approach  \cite{FRS} and
combining it with ideas from  chapter III of \cite{S1}.

\bt\label{In1}
(a) Let $J$ and $L$ be positive integers and let $0<p_1<p<p_2<\nf$, moreover $p_1\le1$.
  Let $T$ be a sublinear operator 
defined on $H^{p_1}(\rn,\ell^2(L))+ H^{p_2}(\rn,\ell^2(L))$. Assume 
  that maps $H^{p_1}(\rn,\ell^2(L))$ to $H^{p_1}(\rn,\ell^2(J))$ with constant $A_1$ and $H^{p_2}(\rn,\ell^2(L))$ to $H^{p_2}(\rn,\ell^2(J))$ with constant $A_2$. 
Then there exists a constant $c_{p_1,p_2,p,n}$ independent of $J$ and $L$ such that 
$$
\|T(\vec F)\|_{\hi(\rn,\ell^2(J))}\le c_{p_1,p_2,p,n}\, A_1^{ \f{\f 1{p}-\f 1 {p_2}}{\f 1{p_1}-\f 1{p_2}}}A_2^{ \f{\f 1{p_1}-\f 1 p}{\f 1{p_1}-\f 1{p_2}}}\|\vec F\|_{\hi(\rn,\ell^2(L))}
$$
 for $\vec F\in\hi(\rn,\ell^2(L))$. 
 \et
(b) Suppose that  $T$ is a sublinear operator  
defined on $H^{p_1}(\rn,\ell^2(L))+ H^{p_2}(\rn,\ell^2(L))$. Assume
 that maps $H^{p_1}(\rn,\ell^2(L))$ to $L^{p_1}(\rn,\ell^2(J))$ with constant $A_1$ and $H^{p_2}(\rn,\ell^2(L))$ to $L^{p_2}(\rn,\ell^2(J))$ with constant $A_2$. 
Then there exists a constant $C$ independent of $J$ and $L$ such that 
$$
\|T(\vec F)\|_{\wlp(\rn,\ell^2(J))}\le c_{p_1,p_2,p,n}\, A_1^{ \f{\f 1{p}-\f 1 {p_2}}{\f 1{p_1}-\f 1{p_2}}}A_2^{ \f{\f 1{p_1}-\f 1 p}{\f 1{p_1}-\f 1{p_2}}}\|\vec F\|_{\hi(\rn,\ell^2(L))}
$$
 for all functions $\vec F\in\hi(\rn,\ell^2(L))$.  

\begin{lemma}\label{cz}
Let $0<p_1<p<p_2<\nf$. 
Given $\vec F=\{f_k\}_{k=1}^L\in\hi(\rn,\ell^2(L))$ and $\al>0$, then there exists $\vec G=\{g^{k}\}_{k=1}^L$ and $\vec B=\{b^{k}\}_{k=1}^L$ such that $\vec F=\vec G+\vec B  $ and 
$$
\|\vec B\, \|_{H^{p_1}(\rn,\ell^2(L)) }^{p_1}\le C\al^{p_1-p}\|F\|_{\hi(\rn,\ell^2(L))}^p
$$
and  
$$
\| \vec G \|_{H^{p_2}(\rn,\ell^2(L))}^{p_2}\le C\al^{ {p_2-p} }
\|\vec F\|_{H^{p,\iy}(\rn,\ell^2(L))}^{ {p} }
$$
where $C=C(p_1,p_2,p,n)$, in particular is independent of $L$.
\end{lemma}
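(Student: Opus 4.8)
The plan is to adapt the classical Calder\'on--Zygmund decomposition of $H^p$ (as in \cite{FRS} and chapter III of \cite{S1}) to the vector-valued, distributional setting, replacing every pointwise manipulation of the $f_k$ by the distributional estimates of Theorem~\ref{dp}. Fix $N=[n/p_1]+1$; since $N\ge[n/p]+1\ge[n/p_2]+1$ and $\cm_M$ is nonincreasing in $M$, the grand maximal quasi-norms $\|\cm_N(\vec f)\|_{L^{p_1}}$, $\|\cm_N(\vec f)\|_{\wlp}$, $\|\cm_N(\vec f)\|_{L^{p_2}}$ are comparable to $\|\vec f\|_{H^{p_1}}$, $\|\vec f\|_{\hi}$, $\|\vec f\|_{H^{p_2}}$ respectively, by Theorem~\ref{t6.4.13}(e) (and its strong analogue) together with monotonicity. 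Set $\Omega=\{x\in\rn:\cm_N(\vec F)(x)>\al\}$, which is open and satisfies $|\Omega|\le \al^{-p}\|\cm_N(\vec F)\|_{\wlp}^p\le C\al^{-p}\|\vec F\|_{\hi}^p$. Take a Whitney decomposition $\Omega=\bigcup_i Q_i$ with $\mathrm{diam}\,Q_i\approx\mathrm{dist}(Q_i,\Omega^c)$ and with the dilates $Q_i^*=\tfrac{9}{8}Q_i$ of bounded overlap, and a subordinate smooth partition of unity $\{\zeta_i\}$ with $\mathrm{supp}\,\zeta_i\subset Q_i^*$, $\sum_i\zeta_i=\chi_\Omega$, $|\p^\ga\zeta_i|\le C_\ga\ell(Q_i)^{-|\ga|}$. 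For each $i$ and each $k$ let $P_i^k$ be the unique polynomial of degree $\le N-1$ with $\langle f_k-P_i^k,q\,\zeta_i\rangle=0$ for all polynomials $q$ with $\deg q\le N-1$, i.e.\ the orthogonal projection in $L^2(\zeta_i\,dx)$ of the functional $q\mapsto\langle f_k,q\zeta_i\rangle$ onto such polynomials; this is legitimate since $\zeta_i\in\coi$ and the pairings are finite and controlled by \eqref{6.4.3.grafakos-x0R}. Put $b_i^k=(f_k-P_i^k)\zeta_i$, $b^k=\sum_i b_i^k$, $g^k=f_k-b^k$, and $\vec B=\{b^k\}_{k=1}^L$, $\vec G=\vec F-\vec B$.

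For the bad part the key estimate is the uniform pointwise bound
\begin{equation*}
\cm_N(\vec b_i)(x)\le C\,\al\,\Big(1+\tfrac{|x-x_i|}{\ell(Q_i)}\Big)^{-n-N},\qquad \vec b_i=\{b_i^k\}_{k=1}^L ,
\end{equation*}
where $x_i$ is the center of $Q_i$. This follows from Theorem~\ref{dp}(c): every quantity entering $(\vp_t*b_i^k)(y)=\langle f_k,\vp_t(y-\cdot)\zeta_i\rangle-\int P_i^k\,\vp_t(y-\cdot)\,\zeta_i\,dx$ reduces to pairings $\langle f_k,\psi\rangle$ with $\psi$ supported in $Q_i^*$ (here the coefficients of $P_i^k$ are themselves such pairings against the dual polynomials times $\zeta_i$), the normalization $\mathfrak N_N(\psi;x_i,\ell(Q_i))$ equals the natural scale factor, and $\inf_{|z-x_i|\le\ell(Q_i)}\cm_N(\vec F)(z)\le C\al$ because $Q_i$ is a Whitney cube and so has a point of $\Omega^c$ within $O(\ell(Q_i))$; the decay exponent $n+N$ comes from the vanishing moments of $\vec b_i$ up to order $N-1$ via a Taylor expansion of $\vp_t(y-\cdot)$ about $x_i$. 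Since $(n+N)p_1>n$, integration gives $\|\cm_N(\vec b_i)\|_{L^{p_1}}^{p_1}\le C\al^{p_1}|Q_i|$, hence $\|\vec b_i\|_{H^{p_1}}^{p_1}\le C\al^{p_1}|Q_i|$. The decay and bounded overlap also show $\sum_i b_i^k$ converges in $H^{p_1}$, hence in $\mcs'$, so $\vec B$ and $\vec G$ are well defined; and, using subadditivity of $\|\cdot\|_{H^{p_1}}^{p_1}$ (valid as $p_1\le1$) and $\sum_i|Q_i|\le C|\Omega|$,
\begin{equation*}
\|\vec B\|_{H^{p_1}}^{p_1}\le\sum_i\|\vec b_i\|_{H^{p_1}}^{p_1}\le C\al^{p_1}|\Omega|\le C\al^{p_1-p}\|\vec F\|_{\hi}^p .
\end{equation*}

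For the good part one proves, exactly as in the classical $H^p$ decomposition, that $\cm_N(\vec G)(x)\le C\al$ for $x\in\Omega$ and $\cm_N(\vec G)(x)\le C\cm_N(\vec F)(x)$ for $x\in\Omega^c$, reading all convolutions as distributional pairings controlled by Theorem~\ref{dp}. Then, by the strong analogue of Theorem~\ref{t6.4.13}(e),
\begin{equation*}
\|\vec G\|_{H^{p_2}}^{p_2}\le C\|\cm_N(\vec G)\|_{L^{p_2}}^{p_2}\le C\al^{p_2}|\Omega|+C\!\!\int_{\{\cm_N(\vec F)\le\al\}}\!\!\cm_N(\vec F)^{p_2}\,dx .
\end{equation*}
The first term is $\le C\al^{p_2-p}\|\vec F\|_{\hi}^p$ by the bound on $|\Omega|$, and for the second we use the elementary fact that for $h\ge0$ and $p_2>p$,
\begin{equation*}
\int_{\{h\le\al\}}h^{p_2}\,dx=p_2\int_0^{\al}\la^{p_2-1}\big|\{\la<h\le\al\}\big|\,d\la\le p_2\,\|h\|_{\wlp}^p\int_0^{\al}\la^{p_2-1-p}\,d\la=\tfrac{p_2}{p_2-p}\,\al^{p_2-p}\|h\|_{\wlp}^p ,
\end{equation*}
applied to $h=\cm_N(\vec F)$. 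This gives $\|\vec G\|_{H^{p_2}}^{p_2}\le C\al^{p_2-p}\|\vec F\|_{\hi}^p$, the remaining claim, with all constants depending only on $p_1,p_2,p,n$.

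The main obstacle is precisely the point stressed in the abstract: $\vec F$ is only a sequence of tempered distributions, so $f_k(x)$, $\int_{Q_i}f_k$ and the polynomial projections $P_i^k$ have no literal meaning, and every step above must be phrased through the pairings $\langle f_k,\psi\rangle$, for which the only available tool is Theorem~\ref{dp}; the normalization functionals $\mathfrak N_N(\psi;x_0,R)$ there are exactly what manufactures the scale-invariant size $C\al$ and the decay exponent $n+N$ in the bad-part estimate. One must also verify that the abstractly defined $P_i^k$ has coefficients of size $O(\al)$ (again from \eqref{6.4.3.grafakos-x0R} at the Whitney scale), that the moment conditions $\langle b_i^k,q\rangle=0$ make sense and propagate correctly (legitimate since $b_i^k$ has compact support), and that the Whitney/partition-of-unity bookkeeping together with the summation over $i$ and the convergence assertions all go through uniformly; these are lengthy but routine. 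By contrast, uniformity in $L$ is automatic, since every estimate is an $\ell^2$-valued one and the constants in Theorems~\ref{t6.4.13} and \ref{dp} do not depend on the length of the sequences.
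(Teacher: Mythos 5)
Your overall strategy coincides with the paper's: Whitney-decompose the level set of the grand maximal function $\cm_N(\vec F)$, subtract polynomials of high degree over the Whitney cubes to form $\vec B$ and $\vec G$, estimate the bad part via vanishing moments and the good part via a size bound, and convert $L^{p_i}$ control of $\cm_N$ into $H^{p_i}$ quasinorm control. The degree $N-1$ vs.\ $N$ discrepancy is harmless.

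However, there is a genuine gap in your bad-part estimate. The claimed uniform bound
$$\cm_N(\vec b_i)(x)\le C\,\al\,\Bigl(1+\tfrac{|x-x_i|}{\ell(Q_i)}\Bigr)^{-n-N}$$
is false on $Q_i^*$. When $x\in Q_i^*$ and the test scale $t$ is much smaller than $\ell(Q_i)$, the test function $\psi=\vp_t(y-\cdot)\zeta_i$ is concentrated at scale $t$, and its normalization $\mathfrak N_N(\psi;x_i,c\ell(Q_i))$ blows up like $(\ell(Q_i)/t)^{N+1}$; so one cannot use the Whitney-scale ball $B(x_i,c\ell(Q_i))$ to reach $\Omega^c$ and extract the factor $\al$. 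The correct local estimate — which is what the paper proves, and also what appears in \cite{S1}, Ch.~III — is $\cm_N(\vec b_i)(x)\le C\,\cm_N(\vec F)(x)$ for $x\in Q_i^*$, where $\cm_N(\vec F)$ may be much larger than $\al$ inside $\Omega$. Consequently the intermediate claim $\|\vec b_i\|_{H^{p_1}}^{p_1}\le C\al^{p_1}|Q_i|$ fails, and the chain $\sum_i\|\vec b_i\|_{H^{p_1}}^{p_1}\le C\al^{p_1}|\Omega|\le C\al^{p_1-p}\|\vec F\|_{\hi}^p$ does not go through.

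The missing idea — and the one that makes the \emph{weak} Hardy version of the CZ decomposition nontrivial — is to accept that the bad part is only controlled by $\cm_N(\vec F)$ near the cubes, so that
$$\sum_i\|\vec b_i\|_{H^{p_1}}^{p_1}\le C\int_{\Om_\al}\cm_N(\vec F)^{p_1}\,dx\,,$$
and then invoke the Lorentz-space inequality
$$\int_E h^{p_1}\,dx\le C_{p,p_1}\,|E|^{\,1-\frac{p_1}{p}}\,\|h\|_{\wlp}^{p_1}\,,\qquad p_1<p\,,$$
which comes from the equivalent definition of $\wlp$ in Proposition~\ref{norm} (\cite{G1}, p.~13). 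Applying this with $E=\Om_\al$ and $|\Om_\al|\le\al^{-p}\|\cm_N(\vec F)\|_{\wlp}^p$ yields precisely $C\al^{p_1-p}\|\vec F\|_{\hi}^p$. Your good-part pointwise estimate on $\Om_\al^c$ also omits the Marcinkiewicz tail $C\al\sum_j\bigl(\ell_j/(\ell_j+|x-x_j|)\bigr)^{N+n+1}$, but that omission is harmless because the $L^{p_2}$ norm of that tail is $\le C\al^{p_2}|\Om_\al|$, which is already absorbed in the first term of your display.
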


\begin{proof}[Proof of theorem \ref{In1}]
Suppose that $T=\{T_j\}_{j=1}^J $. 
We apply Lemma \ref{cz} with $\al = \ga \la$ where $\ga=(A_2^{p_2} A_1^{-p_1})^{\f 1{p_1-p_2}}$.  We obtain 
\begin{align*}
&|\{x:\sup_{t>0}(\sum_{j=1}^J|T_j(\vec F)*\psi_t(x)|^2)^{\f12}>\la\}|\\
\le&|\{x:\sup_{t>0}(\sum_{j=1}^J|T_j(\vec G)*\psi_t(x)|^2)^{\f12}+\sup_{t>0}(\sum_{j=1}^J|T_j(\vec B)*\psi_t(x)|^2)^{\f12}>\la\}|\\
\le&|\{x:\sup_{t>0}(\sum_{j=1}^J|T_j(\vec G)*\psi_t(x)|^2)^{\f12}>\la/2\}|+|\{x:\sup_{t>0}(\sum_{j=1}^J|T_j(\vec B)*\psi_t(x)|^2)^{\f12}>\la/2\}|\\
\le &(\tf{2}{\la})^{p_2}\|\sup_{t>0}(\sum_{j=1}^J|T_j(\vec G)*\psi_t(x)|^2)^{\f12}\|_{L^{p_2}}^{p_2}+(\tf{2}{\la})^{p_1}\|\sup_{t>0}(\sum_{j=1}^J|T_j(\vec B)*\psi_t(x)|^2)^{\f12}\|_{L^{p_1}}^{p_1}\\
\le&A_2^{p_2}(\tf{2}{\la})^{p_2}\|\vec G\|_{H^{p_2}(\rn,\ell^2(L))}^{p_2}+A_1^{p_1}(\tf{2}{\la})^{p_1}\|\vec B\|_{H^{p_1}(\rn,\ell^2(L))}^{p_1}\\
\le&  A_2^{p_2}(\tf{2}{\la})^{p_2}\|\vec F\|_{\hi(\rn,\ell^2)}^pC\,(\ga\la)^{p_2-p}+A_1^{p_1}(\tf{2}{\la})^{p_1}\|\vec F\|_{\hi(\rn,\ell^2)}^pC\,(\ga\la)^{p_1-p}     \\
\le & C  2^{p_2+1} (A_1^{1-\theta} A_2^{\theta})^p\la^{-p}\|\vec F\|_{\hi}^p  \, , 
\end{align*}
where $\theta = \f{\f 1{p_1}-\f 1 p}{\f 1{p_1}-\f 1{p_2}}$.

The proof of the second part is similar,
\begin{align*}
&|\{x:(\sum_{j=1}^J|T_j(\vec F)(x)|^2)^{\f12}>\la\}|\\
\le&|\{x:(\sum_{j=1}^J|T_j(\vec G)(x)|^2)^{\f12}+(\sum_{j=1}^J|T_j(\vec B)(x)|^2)^{\f12}>\la\}|\\
\le&|\{x:(\sum_{j=1}^J|T_j(\vec G)(x)|^2)^{\f12}>\la/2\}|+|\{x:(\sum_{j=1}^J|T_j(\vec B)(x)|^2)^{\f12}>\la/2\}|\\
\le &(\tf{2}{\la})^{p_2}\|(\sum_{j=1}^J|T_j(\vec G)(x)|^2)^{\f12}\|_{L^{p_2}}^{p_2}+(\tf{2}{\la})^{p_1}\|(\sum_{j=1}^J|T_j(\vec B)(x)|^2)^{\f12}\|_{L^{p_1}}^{p_1}\\
\le&A_2^{p_2}(\tf{2}{\la})^{p_2}\|\vec G\|_{H^{p_2}(\rn,\ell^2(L))}^{p_2}+A_1^{p_1}(\tf{2}{\la})^{p_1}\|\vec B\|_{H^{p_1}(\rn,\ell^2(L))}^{p_1}\\
\le&  A_2^{p_2}(\tf{2}{\la})^{p_2}\|\vec F\|_{\hi(\rn,\ell^2)}^pC\,(\ga\la)^{p_2-p}+A_1^{p_1}(\tf{2}{\la})^{p_1}\|\vec F\|_{\hi(\rn,\ell^2)}^pC\,(\ga\la)^{p_1-p}     \\
\le & C  2^{p_2+1} (A_1^{1-\theta} A_2^{\theta})^p\la^{-p}\|\vec F\|_{\hi}^p  \, , 
\end{align*}

\end{proof}

\begin{proof}[Proof of lemma \ref{cz}]
We introduce the  notation 
$$
{\vec F}^* (x)=\mathcal{M}_N(\vec F)(x)=\sup_{\mathcal{N}_N(\phi)\le1}\sup_{|x-y|\le t}\Big(\sum_{k=1}^L|(\phi_t*f_k)(y)|^2\Big)^{\f12}
$$ 
for the grand maximal function and 
$$
\mathcal{M}_0(\vec F)(x)
=\sup_{t>0}\Big(\sum_{k=1}^L|(\psi_t*f_k)(x)|^2\Big)^{\f12} 
$$
for a maximal function with respect to a fixed bump $\psi$. 
It's easy to check that $ \Om_{\al}=\{{\vec F}^*(x)>\al\}$ is open, so we can use the Whitney 
decomposition
theorem to get a collection of cubes $Q_j$ and functions $\vp_j$ such that
\begin{enumerate}
\item $\bigcup_jQ_j=\Om_{\al}$,
\item the $Q_j$ are mutually disjoint,
\item diam($Q_j$)$<$ dist($Q_j,\Om_{\al}^c$)$\le$ $4$ diam($Q_j$), 
 where diam$(Q_j)$ is the diameter of $Q_j$ which is $\sqrt nl_j$
with $l_j$ the length of $Q_j$,
\item every point is contained in at most $12^n$ cubes of the form $Q_j^*=a\, Q_j$ with $a-1>0$ is fixed and small, 
\item $|(\f{\p}{\p x})^{\be}\vph_j(x)|\le A_{\be}l_j^{-|\be|}$, 
where $A_{\be}$ is a constant independent of $l_j$,
\item $supp\ \vph_j\subset Q_j^{**}=b\, Q_j$ ($1<b<a$) and  there exists $0<c<1$ depending on $n$ such that for all
$j$, $\vph_j\ge c>0$ for $x\in Q_j$.
\item $\sum_j\vph_j=\chi_{\Om_{\al}}$.
\end{enumerate}

Here $a\, Q$ is a cube concentric with $Q$ and of side length $a $ times that of $Q$. 
Next we will define $b_j^{k}$ and show the corresponding estimates. Fix $j$, define $P^{(k)}_j$ as the polynomial of degree $N$ , where $N$ is a 
fixed large integer to be chosen, such that
$$
\intrn P^{(k)}_j(x)(x-x_j)^{\be}\vp_j(x)\, dx= \langle f_k,(x-x_j)^{\be}\vp_j\rangle  \q\forall\ |\be|\le N,
$$
where $x_j$ is the center of $Q_j$, and $<f,\vp>$ is the action of $f$ on $\vp$.

Take the norm of $h$ in the Hilbert space of polynomials of degree $\le N$ as 
$$\left\|h\right\|^2=\f{\intrn |h|^2(x)\vp_j(x)dx}{\intrn\vp_j(x)dx}.$$
We have an orthonormal basis $\{e_m\}$ of this Hilbert space with each $e_m$ is a polynomial of degree less than or equal to $  N$ and $\|e_m\|=1$.
Also we can write $P_{j}^{(k)}(x)=\sum_m(f_k,e_m)_je_m(x)$, 
where $(f,h)_j$ is the inner product defined by $(f,h)_j= \f{\langle f,h\vp_j \rangle}{\int \vp_j}$. It's not hard to check 
 the following inequality by the method from \cite{S1},
$$
\sup_{x\in Q^*_j}|\p^{\be}P(x)|\le C_{\be}l_j^{-|\be|}\left(\f{\int_{Q_j}|P|^2}{|Q_j|}\right)^{\f12} \textrm{ for any $P$ with degree $\le N$}.
$$
To prove it, we can reduce this to the case $Q_j$ is a unit cube and prove this case by
that different norms of a finite dimension topological vector space are comparable.
Let's notice also that $\Big(\f{\int_{Q_j}|P|^2}{|Q_j|}\Big)^{\f12}\le C\|P\|$. 
In particular, $|\bar e_m(x)|\le C,\ \forall\ x\in Q_j^*$ and 
$|\p^{\ga}e_m(y)|\le C l_j^{-|\ga|},\ \forall\ y\in Q_j^*$ .
Now let's restrict $x\in Q_j^*$  and we have
\begin{align*}
\Big(\sum_{k=1}^L|P_j^{(k)}(x)|^2\Big)^{\f12}=&\Big(\sum_{k=1}^L|(f_k,\sum_me_m(\cdot)\bar e_m(x))_j|^2\Big)^{\f12}\\
=&\left(\sum_{k=1}^L\left|\left(\f{\langle f_k,\sum_me_m(\cdot)\bar e_m(x)\vp_j(\cdot) \rangle}{\int\vp_j}\right)\right|^2\right)^{\f12}\\
\le& C_N\inf_{|z-x|\le 10\sqrt nl_j}\vec F^*(z)
\le   C_N\al \, .
\end{align*}
Here we introduced the function 
$
\Phi(x,y)=\f{\sum_me_m(y)\bar e_m(x)\vp_j(y)}{\int\vp_j}
$
for which the estimate below holds
$$
\mathfrak{N}_N(\Phi;x,10\sqrt nl_j)
\le C_N\int_{Q_j^*}\sum_{|\ga|\le N+1}l_j^{|\ga|}l_j^{-n}l_j^{-|\ga|}dy
\le C_N\, . 
$$
Let's remark that this $C_N$ is independent of $L$.
Now let's define $b_j^{(k)}=(f_k-P_j^{(k)})\vp_j$ and consider $ (\sum_{k=1}^L|(b_j^{(k)}*\psi_t)(x)|^2)^{\f12}$, where $\psi$ is a smooth function supported in 
$B(0,1)\backslash B(0,\f12)$ with $\int \psi\neq0$.
If $x\in Q_j^*$, then 
$$\Big(\sum_{k=1}^L|(b_j^{(k)}*\psi_t)(x)|^2\Big)^{\f12}\le  \Big(\sum_{k=1}^L|((P_j^{(k)}\vp_j)*\psi_t)(x)|^2\Big)^{\f12}+
\Big(\sum_{k=1}^L|\langle f_k,\vp_j(\cdot)\psi_t(x-\cdot) \rangle |^2\Big)^{\f12}.$$
We know that the first term of the right hand side is controlled by $C_N \al$ since so is $(\sum_{k=1}^L|P_j^{(k)}(x)|^2)^{\tf12}$ for all $x\in Q_j^*$.
 For the second term, if $t\le l_j$, then take $\Phi(y)=\vp_j(x-ty)\psi(y)$, 
 and by Theorem \ref{dp} it is easy to
check the following inequalities
\begin{equation*}
 \Big(\sum_{k=1}^L| \langle f_k,\vp_j(\cdot)\psi_t(x-\cdot) \rangle |^2\Big)^{\f12} 
\le \mathfrak{N}_N(\Phi)\vec F^*(x)
\le  C\vec F^*(x)
\end{equation*}
If $t>l_j$, we can use the same idea but $\Phi(y)=\vp_j(x-l_jy)\psi(\f{l_j}{t}y)$ to get that 
$$
\Big(\sum_{k=1}^L| \langle f_k,\vp_j(\cdot)\psi_t(x-\cdot) \rangle |^2\Big)^{\f12} \le C\vec F^*(x).
$$

If $x\in (Q_j^*)^c$, then there exists $C$ such that supp$\psi_t\cap Q_j^{**}=\eset$
if $t\le Cl_j$, from which we have $(b_j^{(k)}*\psi_t)(x)=0$
since $b_j^{(k)}$ is a distribution supported in $Q_j^{**}$.  
So we can assume $t\ge Cl_j$ in the following discussion.
Now let us fix $x$ and  write $t^{-n}\psi(\f{x-y}{t})=P(y)+R(y)$ by Taylor's formula, where 
$$
P(y)=\sum_{|\be|\le N}\f{\p^{\be}h(x_j)(y-x_j)^{\be}}{\be!}
$$ 
with $h(y)=t^{-n}\psi((x-y)/t)$
is the Taylor polynomial
of degree $N$ at $x_j$ and $R$ is the remainder. Next we concern only  
$y$ in the support of $\vp_j$ because, as we will see, $y$ such that $y\notin Q_j^{**}$ does not affect the following argument.
It is easy to see that
 \begin{align*}
 \Big(\sum_{k=1}^L|(b_j^{(k)}*\psi_t)(x)|^2\Big)^{\f12}=
& \left(\sum_{k=1}^{L}|<(f_k-P_j^{(k)})\vp_j,\psi_{t}(x-\cdot)>|^2\right)^{\f12}\\
=&\left(\sum_{k=1}^{L}|<(f_k-P_j^{(k)}),\vp_jR>|^2\right)^{\f12}\\
\le&\left(\sum_{k=1}^{L}|<f_k, \vp_jR>|^2\right)^{\f12}+\left(\sum_{k=1}^{L}|<P_j^{(k)},\vp_jR>|^2\right)^{\f12}.
 \end{align*}
Since $(\sum_{k=1}^L|P_j^{(k)}(x)|^2)^{\tf12}\le C\al$ for $x\in Q^*_j$,
$<P_j^{(k)},\vp_jR>$ can be written as an integral and 
$
(\sum_{k=1}^{L}|<P_j^{(k)},\vp_jR>|^2)^{\f12}\le t^{-n}C\al l_j^n\le C\al(l_j/(|x-x_j|))^{N+n+1}.
$

We also have the estimate for $y\in Q_j^{**}$ 
\begin{equation}\label{rmd}
|\p^{\ga}R(y)|\le Cl_j^{N+1-|\ga|}/(|x-x_j|)^{N+n+1}.
\end{equation}
Indeed, for $|\ga|\le N$, if we apply Taylor's formula to $\p^{\ga}h$ again, we will have 
$$
\p^{\ga}h(y)=\p^{\ga}P(y)+\sum_
{|\be|=N-|\ga|+1}\f{\p^{\be+\ga}h(\xi)(y-x_j)^{\be}}{\be!},
$$
where $\xi$ is a point between $x_j$ and $y$. In other words,
$$
\p^{\ga}R(y)=\p^{\ga}(h-P)(y)=\sum_
{|\be|=N-|\ga|+1}\f{\p^{\be+\ga}h(\xi)(y-x_j)^{\be}}{\be!}.
$$
Notice 
that $|y-x_j|\le Cl_j$ and 
$|x-x_j|\le Ct$,  then for $|\be|=N-|\ga|+1$,
$$
|\p^{\be+\ga}h(\xi)(y-x_j)^{\be}|\le t^{-n}t^{-N-1}|y-x_j|^{N-|\ga|+1}\le Cl_j^{N+1-|\ga|}|x-x_j|^{-N-n-1}.
$$
Consequently $|\p^{\ga}R(y)|\le Cl_j^{N+1-|\ga|}|x-x_j|^{-N-n-1}$ if $|\ga|\le N$.


Since $|x-x_j|\le|x-y|+|x_j-y|\le |x-y|+C_1l_j\le C'|x-y|$,
for the case $N+1-|\ga|\le 0$, we have
$$|\p^{\ga}R(y)|=|\p^{\ga}h(y)|\le t^{-n}t^{-|\ga|}(\f{t}{|x-y|})^{N+n+1}\le Cl_j^{N+1-|\ga|}|x-x_j|^{-N-n-1}.$$

Now let us take $\Phi(z)=R(z)\vp_j(z)$, and by Theorem \ref{dp} (c) we obtain
\begin{align*}
&\mathfrak{N}_N(\Phi; x_j,10\sqrt nl_j)\\
=&\intrn(1+\f{|z-x_j|}{10\sqrt nl_j})^N\sum_{|\ga|\le N+1}(10\sqrt nl_j)^{|\ga|}|\p^{\ga}\Phi(z)|dz\\
\le&\int_{|z-x_j|\le Cl_j}(1+\f{|z-x_j|}{10\sqrt nl_j})^N\sum_{|\ga|\le N+1}(10\sqrt nl_j)^{|\ga|}|\p^{\ga}\Phi(z)|dz\\
\le &C(\f{l_j}{|x-x_j|})^{N+n+1}
\end{align*} 
since 
$(10\sqrt nl_j)^{|\ga|}|\p^{\be}R(z)\p^{\ga-\be}\vp_j|\le C l_j^{|\ga|}l_j^{N+1-|\be|}|x-x_j|^{-N-n-1}l_j^{|\be-\ga|}$
by \eqref{rmd},
and that $|\{z\in\rn:|z-x_j|\le Cl_j\}|=Cl_j^n$.

If we take $y_j$ as a point in $\Om_\al^c$ with $|y_j-x_j|\le 10\sqrt nl_j$ 
and apply the idea used in two previous cases again, we have 
 \begin{align*}
 \left(\sum_{k=1}^{L}|<f_k, \vp_jR>|^2\right)^{\f12}
 \le &C(\f{l_j}{|x-x_j|})^{N+n+1}\inf_{|w-x_j|\le 10\sqrt nl_j} \vec F^*(w) \, \\
 \le& C(\f{l_j}{|x-x_j|})^{N+n+1}{\vec F}^* (y_j)\\
 \le& C\al (\f{l_j}{|x-x_j|})^{N+n+1}.
 \end{align*}
 
 To summarize, $ (\sum_{k=1}^L|(b_j^{(k)}*\psi_t)(x)|^2)^{\f12}\le C\al (\f{l_j}{|x-x_j|})^{N+n+1}$ for $x\in (Q_j^*)^c$.

 If $N$ is chosen such that $(N+n+1)p_1>n$, then by $p_1\le 1$ we will get 
\begin{align*}
&\intrn\sup_{t>0}(\sum_{k=1}^{L}|(\sum_{m_1\le j\le m_2}b_{j}^{(k)}*\psi_t)(x)|^2)^\f{p_1}2dx\\
\le &\sum_{j=m_1}^{m_2}\left(\int_{Q_j^{*}}\vec F^*(x)^{p_1}dx+C\al^{p_1}\int_{(Q_j^*)^c}\Big(\f{l_j}{|x-x_j|}\Big)^{(N+n+1)p_1}dx\right)\\
\le& C\sum_{j=m_1}^{m_2}\int_{Q_j^{*}}\vec F^*(x)^{p_1}dx\\
\le& C\int_{\Om_\al}\vec F^*(x)^{p_1}dx\\
\le&C \al^{p_1-p}|\Om_\al|^{1-\f{p}{p_1}}\left(\int_{\Om_\al}\vec F^*(x)^{p_1}dx\right)^{\f{p}{p_1}}\\
\le &C\al^{p_1-p}\|\vec F^*\|_{\wlp}^p<\iy\, .
\end{align*}

The penultimate inequality comes from an equivalent definition of $\wlp$ spaces, see \cite{G1} p. 13.
When $m_1=1$, $\int\sum_{j=1}^{m_2}\chi_{Q^*_j}\vec F^*(x)^{p_1}dx\le \int C\chi_{\Om_{\al}}\vec F^*(x)^{p_1}dx<\nf$
by the decomposition of $\Om_{\al}$,
then apply the Lebesgue dominate convergence theorem, $\int\sum_{j=1}^{\nf}\chi_{Q^*_j}\vec F^*(x)^{p_1}dx<\nf $ and
$\int\sum_{j=m_1}^{m_2}\chi_{Q^*_j}\vec F^*(x)^{p_1}dx$ can be arbitrary small if both $m_1$ and $m_2$ are large.
 Therefore $ \{\sum_{1\le j\le m}b_{j}^{(k)}\}_m$ is Cauchy in $H^{p_1}(\rn,\ell^2(L))$. Since $H^{p_1}(\rn,\ell^2(L))$ is complete, 
the limit of the sequence $\{b^{(k)}\}_{k=1}^L$ exists and 
$\|\{b^{(k)}\}_k\|^{p_1}_{H^{p_1}(\rn,\ell^2(L))}\le C\al^{p_1-p}\|\vec F^*\|_{\wlp}^p.$ This  $C$ is independent of $L$.

Moreover, since for each $k$, $\sum_{j=1}^mb_j^{(k)}\rar b^{(k)}$ in $\mcs'$ as $m\rar\iy$, we have 
\begin{align*}
\sup_{t>0}(\sum_{k=1}^L|b^{(k)}*\psi_t|^2)^{\f12}\le&\sum_{j=1}^{\iy}\sup_{t>0}(\sum_{k=1}^L|(b_j^{(k)}*\psi_t)(x)|^2)^{\f12}\\
\le&\sum_{j=1}^{\iy}\vec F^*\chi_{Q_j^*}+C\al\sum_{j=1}^{\iy}(\tf{l_j}{|x-x_j|})^{N+n+1}\chi_{(Q_j^*)^c}\\
\le& C\vec F^*\chi_{\Om_\al}+C\al\sum_{j=1}^{\iy}(\tf{l_j}{l_j+|x-x_j|})^{N+n+1},
\end{align*}
from which we have 
\begin{align*}
&\la^p|\{\sup_{t>0}(\sum_{k=1}^L|b^{(k)}*\psi_t|^2)^{\f12}>\la\}|\\
\le&\la^p|\{\vec F^*>\f{\la}2\}|+\la^p|\{C\al\sum_j(\tf{l_j}{|x-x_j|+l_j})^{N+1}>\f{\la}2\}|\\
\le &C\|\vec F\|_{\hi(\rn,\ell^2)}^p+C\al^p\int_{\rn}\sum_j(\tf{l_j}{|x-x_j|+l_j})^{(N+1)p}dx\\
\le&C\|\vec F\|_{\hi(\rn,\ell^2)}^p+C\al^p\sum_j|Q_j|\\
\le &C\|\vec F\|_{\hi(\rn,\ell^2)}^p\, . 
\end{align*}

We  can therefore define $\vec G=\{g^{(k)}\}_{k=1}^L$ as $g^{(k)}=f_k-\sum_jb_j^{(k)}$ and obviously $\vec G$ lies in $\hi(\rn,\ell(L))$.
To estimate $ (\sum_{k=1}^L|(g^{(k)}*\psi_t)(x)|^2)^{\f12}$, let's consider first the case $x\notin\Om_\al$. Then 

\begin{align*}
\mathcal{M}_0(\vec G)(x)=\,&\sup_{t>0}(\sum_{k=1}^L|(g^{(k)}*\psi_t)(x)|^2)^{\f12}\\
\le &\,\sup_{t>0}(\sum_{k=1}^L|(f_k*\psi_t)(x)|^2)^{\f12}+\sup_{t>0}(\sum_{k=1}^L|(\sum_jb_j^{(k)}*\psi_t)(x)|^2)^{\f12}\\
\le&\,C\vec F^*(x)\chi_{\Om_\al^c}(x)+\sum_{j=1}^{\iy}C\al(\tf{l_j}{l_j+|x-x_j|})^{N+n+1}.
\end{align*}

We claim that this estimate is true for almost all $x$. 
\newcommand{\mr}{\mathrm}

Now let's consider the case $x\in\Om_\al$. There exists some $m$ such that $x\in Q_m$, and we can divide $\mathbf{N}$ into two sets $\mr I$ and $\mr {II}$ with $j\in \mr I$ 
if $Q_j^*\cap Q_m^*\neq\emptyset$ and $j\in \mathrm{II}$ otherwise. 
$$
\mathcal{M}_0(\{g^{(k)}\})(x)\le \mathcal{M}_0(\{f_k-\sum_{j\in \mr I}b^{(k)}_j\})(x)+\mathcal{M}_0(\{\sum_{j\in \mathrm{II}} b_j^{(k)}\})(x)
$$
Since 
$x\notin Q_j^*$ for $j\in\mr{II},$
$\mathcal{M}_0(\{\sum_{j\in \mathrm{II}} b_j^{(k)}\})(x)
\le \sum_{j\in\mr{II}}
C\al (\tf{l_j}{l_j+|x-x_j|})^{N+n+1}.$

We notice that 
\begin{align*}
 \mathcal{M}_0(\{f_k-\sum_{j\in \mr I}b^{(k)}_j\})(x) 
\le\,\mathcal{M}_0(\{f_k-\sum_{j\in \mr I}f_k\vp_j\})(x)+\mathcal{M}_0(\{\sum_{j\in \mr I}P_j^{(k)}\vp_j\})(x).
\end{align*}
To estimate the second term, we have 
$$
\Big(\sum_k\big|\sum_{j\in\mr I}P_j^{(k)}\vp_j\big|^2\Big)^{\f12}\le \sum_{j\in\mr I}\big(\sum_k | P_j^{(k)}|^2\big)^{\f12}\vp_j\le C\al,
$$
and then 
$
\mathcal{M}_0(\{\sum_{j\in \mr I}P_j^{(k)}\vp_j\})(x)\le C\al.
$

To estimate the other term, we notice that we need only to consider the case $t>cl_m$ ($c$ is independent of $m$), 
otherwise $  (\sum_k((f_k-\sum_{j\in\mr I}f_k\vp_j)*\psi_t)^2)^{\f12}(x)=0$ since $\psi$ is supported 
in $B(0,1)$, $ f_k(1-\sum_{j\in\mr I}\vp_j)$ is supported outside $Q_m^{**}$ and $x\in Q_m$. 
If $t<10\sqrt nl_m$, then
\begin{align*}
 \Big(\sum_k |(f_k-\sum_{j\in\mr I}f_k\vp_j)*\psi_t |^2\Big)^{\f12}(x) 
=&\,(\sum_k| \langle f_k,\Phi \rangle|^2)^{\f12}\\
\le&\,\mathfrak{N}_N(\Phi;x,10\sqrt nl_m)\inf_{|z-x|\le 10\sqrt nl_m}
\vec F^*(z)\\
\le&\, C\al\\
\le&\, C\al(\tf{l_m}{l_m+|x-x_m|})^{N+n+1}, 
\end{align*}
where $\Phi(y)=\psi_t(x-y)(1-\sum_{j\in\mr I}\vp_j(y))$.  

For $t>10\sqrt nl_m$, $\Phi(y)=\psi_t(x-y)(1-\sum_{j\in\mr I}\vp_j(y))=\psi_t(x-y)$ since the support of $ \sum_{j\in\mr I}\vp_j$ is contained in $ B(x,9\sqrt nl_m)$.
We can check that   $\mathfrak{N}_N(\Phi;x,t)\le C$ with $C$ independent of $x$ and $t$.
Therefore
 $$
 \Big(\sum_k |(f_k-\sum_{j\in\mr I}f_k\vp_j)*\psi_t |^2\Big)^{\f12}(x) \le\mathfrak{N}_N(\Phi;x,t)\inf_{|z-x|\le t}
 \vec F^*(z)\le C\al\, .
$$

To summarize, we have showed that 
$$\mathcal{M}_0(\vec G)(x)\le C
\vec F^*(x)\chi_{\Om_\al^c}(x)+\sum_{j=1}^{\iy}C\al(\tf{l_j}{l_j+|x-x_j|})^{N+n+1}\ a.e.$$
This gives us that $\|\mathcal{M}_0(\vec G)\|_{L^{p_2}}\le C\al^{\f{p_2-p}{p_2}}\|\vec F\|_{H^{p,\iy}}^{\f{p}{p_2}}$ since 
$
\|\sum_{j=1}^{\iy}(\tf{l_j}{l_j+|x-x_j|})^{N+n+1}\|_{L^{p_2}}
\le C|\Om|^{\tf1{p_2}}.
$

\end{proof}


We have the following corollary.

\begin{corollary}\label{SI}
Let $0<p<\iy$ and suppose that 
 $\{K_j(x)\}_{j=1}^L$ is a family of kernels defined on $\rn\backslash\{0\}$ satisfying
$$
\sum_{j=1}^L |\p^{\al}K_j(x)|\le A|x|^{-n-|\al|}<\iy  
$$
for all $|\al|\le \max\{[n/p]+2,n+1\}$ and 
$$
\sup_{\xi\in\rn} \sum_{j=1}^L |  \wh{K_j}(\xi)|\le B<\iy\, .
$$
Then for some $0<p$ there exists a constant $C_{n,p}$ independent of $L$ such that 
\begin{equation}\lab{963738}
\Big\|\sum_{j=1}^LK_j*f_j\Big\|_{\hi(\rn)}\le C_{n,p}(A+B)\|\{f_j\}_{j=1}^L\|_{\hi(\rn,\ell^2(L))}.
\end{equation}
\end{corollary}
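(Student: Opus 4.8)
The plan is to view the assignment $\vec f=\{f_j\}_{j=1}^L\mapsto\sum_{j=1}^L K_j*f_j$ as a \emph{linear} (hence sublinear) operator $T$ that sends $\ell^2(L)$-valued inputs to \emph{scalar} outputs, i.e.\ $J=1$ in the notation of Theorem \ref{In1}; note $T$ is well defined on the relevant Hardy spaces of bounded tempered distributions because each $\wh{K_j}\in L^\nf$. I would fix exponents $p_1,p_2$ with $0<p_1<p<p_2<\nf$, $p_1\le1$, $p_2>1$, and with $p_1$ so close to $\min(p,1)$ that $[\,n(\tfrac1{p_1}-1)\,]+1\le\max\{[n/p]+2,\,n+1\}$; the derivative hypothesis on the $K_j$ is calibrated precisely so that this many derivatives are available. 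One then checks the two endpoint hypotheses of Theorem \ref{In1}(a) with constants $\lesssim A+B$ and \emph{independent of $L$}, and applies that theorem.

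For the exponent $p_2$: since $p_2>1$ the strong Hardy spaces coincide with Lebesgue spaces, $H^{p_2}(\rn)=L^{p_2}(\rn)$ and $H^{p_2}(\rn,\ell^2(L))=L^{p_2}(\rn,\ell^2(L))$ with equivalent norms, so it suffices to prove $T\colon L^{p_2}(\rn,\ell^2(L))\to L^{p_2}(\rn)$ with constant $\lesssim A+B$. Regard $\vec K(x)=(K_1(x),\dots,K_L(x))$ as an $\ell^2(L)$-valued convolution kernel; from $\|\partial^\al\vec K(x)\|_{\ell^2(L)}\le\sum_{j=1}^L|\partial^\al K_j(x)|\le A|x|^{-n-|\al|}$ (taking $|\al|\le1$) we get the size and H\"ormander conditions with constant $\lesssim A$, while Plancherel together with Cauchy--Schwarz in $j$,
\[
\Big|\sum_{j=1}^L\wh{K_j}(\xi)\wh{f_j}(\xi)\Big|^2\le\Big(\sum_{j=1}^L|\wh{K_j}(\xi)|\Big)\Big(\sum_{j=1}^L|\wh{K_j}(\xi)|\,|\wh{f_j}(\xi)|^2\Big)\le B^2\sum_{j=1}^L|\wh{f_j}(\xi)|^2,
\]
gives $\|T\vec f\|_{L^2}\le B\,\|\vec f\|_{L^2(\rn,\ell^2(L))}$. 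The vector-valued Calder\'on--Zygmund theorem (e.g.\ \cite{G2}) then yields $T\colon L^{q}(\rn,\ell^2(L))\to L^{q}(\rn)$ for every $1<q<\nf$ with constant depending only on $n,q,A,B$---in particular for $q=p_2$, and never on $L$.

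For the exponent $p_1$ I would prove $T\colon H^{p_1}(\rn,\ell^2(L))\to H^{p_1}(\rn)$ with constant $\lesssim A+B$ either by quoting the boundedness criterion for convolution singular integrals on $H^{p_1}$ applied to the $\ell^2(L)$-valued kernel $\vec K$ (\cite{G2}), or---making the $L$-uniformity transparent---by testing on atoms: if $\vec a=\{a_j\}$ is an $(\ell^2(L),p_1)$-atom supported in a cube $Q$ with $\big\|(\sum_j|a_j|^2)^{1/2}\big\|_{L^\nf}\le|Q|^{-1/p_1}$ and $\int a_j(x)\,x^\be\,dx=0$ for all $j$ and all $|\be|\le[\,n(\tfrac1{p_1}-1)\,]$, then $\|T\vec a\|_{H^{p_1}}^{p_1}\approx\int_{\rn}\big(\cm_N(T\vec a)\big)^{p_1}$ splits over $2Q$ and $(2Q)^c$: on $2Q$ one uses H\"older and the $L^2$ bound, $\|T\vec a\|_{L^2}\le B\,\|\vec a\|_{L^2(\ell^2(L))}\le B\,|Q|^{\frac12-\frac1{p_1}}$, to get $\int_{2Q}\big(\cm_N(T\vec a)\big)^{p_1}\lesssim B^{p_1}$; off $2Q$ one subtracts from the mollified translated kernel its Taylor polynomial of degree $[\,n(\tfrac1{p_1}-1)\,]$ at the centre of $Q$ and bounds the remainder via $\sum_j|\partial^\al K_j(x)|\le A|x|^{-n-|\al|}$ with $|\al|=[\,n(\tfrac1{p_1}-1)\,]+1$, obtaining an integrable tail $\lesssim A^{p_1}$. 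This is the scalar argument of \cite{G2} run coordinatewise in $j$, so the atomic bound---hence, via the $p_1$-subadditivity of $\|\cdot\|_{H^{p_1}}^{p_1}$ and the atomic decomposition of $H^{p_1}(\rn,\ell^2(L))$, the operator bound---is independent of $L$. With both endpoint estimates in hand, Theorem \ref{In1}(a) applied with these $p_1,p_2$ and $J=1$ gives $\big\|\sum_{j=1}^L K_j*f_j\big\|_{\hi(\rn)}\le c_{p_1,p_2,p,n}\,A_1^{\theta_1}A_2^{\theta_2}\,\|\{f_j\}\|_{\hi(\rn,\ell^2(L))}$ with $A_1,A_2\lesssim A+B$ and $\theta_1,\theta_2>0$, $\theta_1+\theta_2=1$, so $A_1^{\theta_1}A_2^{\theta_2}\le A_1+A_2\lesssim A+B$, and the constant is $L$-independent because the one in Theorem \ref{In1}(a) is. The main obstacle is the $p_1$-endpoint: one must confirm that the smoothness actually assumed suffices to run the atomic/maximal-function estimate for \emph{some} admissible $p_1<p$, and---more importantly for the scheme as a whole---that every constant occurring there, in the $L^2$ bound, and in Lemma \ref{cz} and Theorem \ref{In1}, is genuinely uniform in the number $L$ of kernels.
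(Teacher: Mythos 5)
Your scheme is essentially the paper's: pick $p_1\le 1<p_2$ with $p_1<p<p_2$, verify the two endpoint bounds $T\colon H^{p_i}(\rn,\ell^2(L))\to H^{p_i}(\rn)$ with constants $\lesssim A+B$ uniform in $L$, and feed them into Theorem~\ref{In1}(a) with $J=1$. The paper's proof is just a short citation of Theorem~6.4.14 in \cite{G2} for both endpoints, with the exponent $p_1$ chosen so that $[n/p_1]+1=\max\{[n/p]+2,n+1\}$, the order of smoothness that theorem demands; you supply the content of that citation, using the $L^{p_2}=H^{p_2}$ identification plus vector-valued Calder\'on--Zygmund theory at the upper endpoint and an atomic argument at the lower one, which is a reasonable unpacking of the same fact.

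One small calibration point to watch: you choose $p_1$ so that $[\,n(\tfrac1{p_1}-1)\,]+1\le\max\{[n/p]+2,n+1\}$, which is the number of derivatives the Taylor--remainder estimate on atoms uses; but if you instead quote the singular-integral theorem on $H^{p_1}$ from \cite{G2} directly (your first option), that result is phrased in terms of the grand maximal function and asks for smoothness up to order $[n/p_1]+1$, which is a strictly stronger demand. The paper's condition $[n/p_1]+1=\max\{[n/p]+2,n+1\}$ is tailored to the latter; yours is tailored to the former. Since one can always choose $p_1$ small enough to satisfy the paper's stronger condition (and thereby yours as well), this doesn't break the argument, but the two criteria should not be conflated, and the phrase ``calibrated precisely'' is more accurate for the paper's choice than for yours. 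With that understood, and with the $L$-uniformity of the endpoint constants and of the constant in Theorem~\ref{In1}(a) confirmed as you describe, the proof goes through.
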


\begin{proof}

We pick $p_1<p<p_2$ such that $p_1\le 1$ and $[n/p_1]+1=\max\{[n/p]+2,n+1\}$. Then \eqref{963738} holds with 
$H^{p,\nf}$ replaced by both $H^{p_1}$ and $H^{p_2}$ in view of Theorem 6.4.14 in \cite{G2}. 
Using Theorem \ref{In1} we derive the required conclusion. 
\end{proof}

\section{Square function characterization of   $H^{p,\nf}$}

We discuss an important characterization  of Hardy spaces 
in terms of   Littlewood--Paley  square functions. The vector-valued Hardy spaces 
and the action of singular integrals on them  
are crucial tools in obtaining this characterization.

We first      set up the  notation. 
We fix a radial Schwartz function $\Psi$ on $\rn$ whose Fourier 
transform is nonnegative, supported in the annulus $1-\f17\le |\xi|\le 2$,   and    satisfies 
\begin{equation*}
\sum_{j\in\zzz} \wh{\Psi}(2^{-j}\xi)=1
\end{equation*} 
for all $\xi\neq 0$. Associated with this bump, we define the 
Littlewood--Paley operators 
$\De_j$  given by multiplication  on 
the Fourier transform side 
by the function $\wh{\Psi}(2^{-j}\xi)$, that is, 
\begin{equation}\lab{6.4.defDej}
\De_j (f)  =  \Psi_{2^{-j}}*f\, . 
\end{equation} 
We also define the function $\Phi$ by $\wh\Phi(\xi)=\sum_{j\le0}\wh\Psi(2^{-j}\xi)$ for $\xi\neq 0$ and $\wh\Phi(0)=1$.
Now we're going to prove Theorem \ref{sq}.
\begin{proof}[Proof of Theorem \ref{sq}]
Choose $f\in \hi$
and denote $f_M= \sum_{|j|\le M}\De_j(f)=\Phi_{2^{-M}}*f-\Phi_{2^M}*f$ and $  S(f)=(\sum_{|j|\le M}|\De_j(f)|^2)^{\f12}$.
We can see that $S(f+g)\le S(f)+S(g)$ and $S(af)=|a|S(f)$. We also know from \cite{G2} 
that $S$ maps $H^{p_i} $ to $L^{p_i}$ ($i=1,2$) bounded by the 
square function characterization of Hardy spaces.  Then by Theorem \ref{In1} it follows that $S$ maps $\hi$ to $L^{p,\iy}$ bounded for 
$p\in(p_1,p_2)$,
so 
$$
\|S(f)\|_{L^{p,\iy}}\le C\|f\|_{\hi}.
$$

Applying Fatou's lemma for $\wlp$ spaces we have 
$$
\|(\sum_{j\in \zzz}|\De_j(f)|^2)^{\f12}\|_{\wlp}\le\liminf_{M\rar\infty}\|(\sum_{|j|\le M}|\De_j(f)|^2)^{\f12}\|_{\wlp}\le C\|f\|_{\hi}.
$$

Assume   that we have a distribution 
$f\in\mcs'$ such that $ \|(\sum_{j\in \zzz}|\De_j(f)|^2)^{\f12}\|_{\wlp}<\iy$ and define $f_j=\De_j(f)=\Psi_{2^{-j}}*f$. We can show that 
$\{f_j\}_{j\in\zzz}\in\hi(\rn,\ell^2)$.

To prove this, let's take $\vp\in\mcs$ whose Fourier transform takes value $0$ for $|\xi|\ge 2$ and  $1$ for $|\xi|\le1$. 
$\wh{\vp_t*\De_j(f)}(\xi)=\wh\vp(t\xi)\wh\Psi(2^{-j}\xi)\wh f(\xi),$
so it's just $\wh{\De_j*f}$ if $\f1t>2^{j+1}$ and $0$ if $\f1t<\f37\cdot2^j$. Therefore 
$$
\sup_{t>0}|\vp_t*\De_j(f)|\le|\De_j(f)|+\sup_{\f73\cdot2^{-j}\ge t\ge 2^{-(j+1)}}|\vp_t*\De_j(f)|.
$$
For $\f73\cdot2^{-j}\ge t\ge 2^{-(j+1)}$, by lemma 6.5.3 of \cite{G2}
\begin{align*}
|\vp_t*\De_j(f)|(x)
\le &C_N''M(|\De_j(f)|^r)^{\f1r}(x),
\end{align*}
where $M$ is the Hardy-Littlewood maximal function and $r<\min(2,p)$. Apply Proposition \ref{mf} to obtain  
\begin{align*}
&\|\sup_{t>0}(\sum_{j\in\zzz}|\vp_t*\De_j(f)|^2)^{\f12}\|_{\wlp}\\
\le&\,\|(\sum_{j\in\zzz}(\sup_{t>0}|\vp_t*\De_j(f)|)^2)^{\f12}\|_{\wlp}\\
\le &\,C_p \|(\sum_{j\in\zzz}(|\De_j(f)|)^2)^{\f12}\|_{\wlp}+C_p\|(\sum_{j\in\zzz}(M(|\De_j(f)|^r)^{\f1r})^2)^{\f12}\|_{\wlp} \\
\le &\,C_p'\|(\sum_{j\in\zzz}(|\De_j(f)|)^2)^{\f12}\|_{\wlp}. 
\end{align*}
The fact that 
$\|\sum_{k=1}^{\iy}g_k\|_{\wlp}<\iy$ doesn't imply that $\{\sum_{k=1}^Mg_k\}_M$ is a Cauchy sequence 
in $\wlp$, so we cannot apply the method used in $H^p$ case. 
But we still can use a new method which is also applicable to the $H^p$ case.

Let $\wh\eta(\xi)=\wh\Psi(\xi/2)+\wh\Psi(\xi)+\wh\Psi(2\xi)$, then by Corollary \ref{SI}
\begin{align*}
\big\|\sum_{|j|\le M}\De_j(f)\big\|_{\hi}=\,&\,\|\sum_{|j|\le M}\De^{\eta}_j(f_j)\|_{\hi}\\
\le&\, C\|\{f_j\}\|_{\hi(\rn,\ell^2(M))}\\
\le&\,C\|\sup_{t>0}(\sum_{|j|\le M}|\vp_t*f_j|^2)^{\f12}\|_{\wlp}\\ 
\le &\,C\|(\sum_{|j|\le M}\sup_{t>0}|\vp_t*f_j|^2)^{\f12}\|_{\wlp}\\ 
\le &\,C\|(\sum_{j\in\zzz}(|\De_j(f)|)^2)^{\f12}\|_{\wlp} .
\end{align*}

So $\{\sum_{|j|\le M}\De_j(f)\}_M$ is a bounded sequence in $\hi$ uniformly in $M$ and we are able to use the following lemma.
\begin{lemma}
If $\{f_j\}$ is bounded by $B$ in $\hi$ (or $H^p$), then there exists a subsequence $\{f_{j_k}\}$ such that $f_{j_k}\rar f$ in $\mcs'$ for some $f$ in $\hi$ (or $H^p$) with 
$\|f\|_{\hi}\le B$ (or $\|f\|_{H^p}\le B$). 
\end{lemma}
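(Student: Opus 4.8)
The plan is to regard $\{f_j\}$ as an equicontinuous family of functionals on the Fréchet space $\mcs(\rn)$, to extract a weak-$*$ (i.e.\ $\mcs'$) convergent subsequence by a diagonal argument, and then to use Proposition \ref{con1} to transfer the bound $B$ to the limit.

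First I would record a uniform estimate on the distributional pairings. By part (a) of Theorem \ref{dp} applied to the length-one sequence $\{f_j\}$, with $N=[n/p]+1$, there is a constant $C_{n,p}$ such that
\[
|\langle f_j,\vp\rangle|\le C_{n,p}\,\mathfrak{N}_N(\vp)\,\|f_j\|_{\hi}\le C_{n,p}\,\mathfrak{N}_N(\vp)\,B
\]
for every $\vp\in\mcs(\rn)$ and every $j$; when $p>1$ one may instead invoke $\hi=L^{p,\nf}$ from Theorem \ref{eq} together with $|\langle f_j,\vp\rangle|\le\|f_j\|_{L^{p,\nf}}\|\vp\|_{L^{p',1}}$, and the $H^p$ case reduces to this one since $H^p\hookrightarrow\hi$ continuously. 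Since $\mathfrak{N}_N$ is a continuous seminorm on $\mcs(\rn)$, this shows the family $\{f_j\}$ is equicontinuous.

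Next comes the extraction. Fix a countable dense subset $\{\psi_m\}_{m\ge1}$ of $\mcs(\rn)$. For each fixed $m$ the complex numbers $\langle f_j,\psi_m\rangle$ stay in a bounded disc, so a diagonal argument produces a subsequence $\{f_{j_k}\}$ for which $\langle f_{j_k},\psi_m\rangle$ converges for every $m$. Equicontinuity upgrades this to convergence against an arbitrary $\vp\in\mcs(\rn)$: given $\ve>0$, choose $\psi_m$ with $\mathfrak{N}_N(\vp-\psi_m)<\ve$ and estimate
\[
|\langle f_{j_k}-f_{j_l},\vp\rangle|\le|\langle f_{j_k}-f_{j_l},\psi_m\rangle|+2C_{n,p}B\,\ve ,
\]
so $\{\langle f_{j_k},\vp\rangle\}_k$ is Cauchy in $\cc$. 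Defining $\langle f,\vp\rangle:=\lim_k\langle f_{j_k},\vp\rangle$ gives a linear functional with $|\langle f,\vp\rangle|\le C_{n,p}\mathfrak{N}_N(\vp)B$, hence $f\in\mcs'(\rn)$, and by construction $f_{j_k}\rar f$ in $\mcs'$.

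Finally, since $f_{j_k}\rar f$ in $\mcs'$ and $\|f_{j_k}\|_{\hi}\le B$, Proposition \ref{con1} yields $\|f\|_{\hi}\le B$; repeating the whole argument with $H^p$ in place of $\hi$ (the $\mcs'$ extraction is identical, and Proposition \ref{con1} also covers $H^p$) gives the parenthetical claim. I do not anticipate a real obstacle: the only steps requiring mild care are the density argument that turns pointwise convergence on $\{\psi_m\}$ into convergence on all of $\mcs$, and the case distinction at $p>1$ where $\hi$ is a space of genuine functions — both are routine given Theorems \ref{eq} and \ref{dp} and Proposition \ref{con1}.
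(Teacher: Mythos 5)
Your proof is correct and follows essentially the same route as the paper's: bound the pairings $\langle f_j,\vp\rangle$ via Theorem \ref{dp} (or duality for $p>1$), use separability of $\mcs$ to extract a weak-$*$ convergent subsequence, and invoke Proposition \ref{con1} to transfer the bound. The only difference is that you spell out the diagonal argument and the $p>1$ case explicitly, whereas the paper states them more tersely.
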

\begin{proof}
$V=\{\vp\in\mcs:\mathfrak{N}_N (\vp)<\f1{2BC}\}$ is a neighborhood of $0$ in $\mcs$ and 
$$
|\langle f_j,\vp \rangle |\le C\ \mathfrak{N}_N (\vp)\|f_j\|_{\hi}\le 1.
$$ 
So by the separability of $\mcs$ we have the weak*-compactness of this sequence, which means that there exists a subsequence $\{f_{j_k}\}$ such that 
$f_{j_k}\rar f$ in $\mcs'$. Therefore $\dis \|f\|_{\hi}\le \liminf_{k\rar\iy}\|f_{j_k}\|_{\hi}\le B$.
\end{proof}

By the lemma we know that $ \sum_{|j|\le M_k}\De_j(f)\rar g$ in $\mcs'$ with 
$$
\dis\|g\|_{\hi}\le C\Big\|\big(\sum_{j\in\zzz}|\De_j(f)|^2\big)^{\f12}\Big\|_{\wlp}.
$$
Moreover, we know that $ \sum_{|j|\le M}\De_j(f)\rar f$ in $\mcs'/\mathcal{P}$, so there is a unique polynomial $Q$ such that $\dis g=f-Q.$
\end{proof}

\begin{corollary}\lab{last}
Let $\Psi$ be a  smooth bump whose Fourier transform is
supported in an  annulus that does not contain the origin   and
satisfies for some positive integer $q$:
$$
\sum_{j\in \zzz} \wh{\Psi}(2^{-jq}\xi) = 1, \qq \xi\in \mathbb
R^d\setminus\{0\}.
$$
Then for any $0< p<\nf$ there is a constant $C(p,n)$ (that also
depends on $\Psi$) such that for all  functions $f$ in $L^r$ with some $r\in[1,\nf]$ and whose ``lacunary" square function $
\mathbf S^\Psi_q(f) = (\sum_{\ell\in \zzz } |\De_{q\ell}^\Psi
(f)|^2)^{1/2}\, 
$ lies in weak $\lp$ we
have
$$
\| f\|_{L^{p,\nf}}\le C(p,n) \big\|\mathbf S^{\Psi}_q(f)
\big\|_{L^{p,\nf }} \, .
$$

\end{corollary}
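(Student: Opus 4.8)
The plan is to deduce the corollary from the square function characterization of $\hi$ just established (Theorem \ref{sq}), together with the elementary fact that for a genuine locally integrable function membership in $\hi$ controls the $\wlp$ quasinorm. Put $g_j=\De^\Psi_{qj}(f)=\Psi_{2^{-qj}}*f$, so that the hypothesis reads $A:=\big\|(\sum_{j\in\zzz}|g_j|^2)^{1/2}\big\|_{\wlp}<\iy$. A preliminary remark: because $f\in L^r$ with $r<\iy$, the Littlewood--Paley partial sums of the kind we use telescope to operators of the form $\Theta_{2^{-M}}*f-\Theta_{2^{M}}*f$ with $\Theta$ Schwartz and $\int\Theta=1$, and since $\Theta_{2^{-M}}*f\to f$ as an approximate identity while $\|\Theta_{2^{M}}*f\|_{L^\iy}\le 2^{-Mn/r}\|\Theta\|_{L^{r'}}\|f\|_{L^r}\to0$, these partial sums converge to $f$ in $\mcs'$; in particular $f=\sum_j g_j$ in $\mcs'$. (When $r=\iy$ this can fail — e.g.\ $f\equiv1$ has vanishing lacunary square function while $1\notin\wlp$ — so for $r=\iy$ one must assume in addition that $f$ equals its reconstruction $\sum_j g_j$, equivalently that $\wh f$ carry no mass at the origin.)

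First I would compare $\mathbf S^\Psi_q(f)$ with the full square function $S(f)=(\sum_{k\in\zzz}|\De_k(f)|^2)^{1/2}$ attached to the fixed bump $\Psi$ of Theorem \ref{sq}. Since $\wh\Psi$ (that of the corollary) is supported in one fixed annulus, for each $k$ only boundedly many indices $j$ — those with $|k-qj|\le C$ for a constant $C=C(\Psi,q)$ — contribute to the (locally finite) sum $\De_k(f)=\sum_j\De_k(g_j)$, and for such pairs $g_j$ is band-limited at frequency $\sim 2^k$, so Lemma 6.5.3 of \cite{G2} yields $|\De_k(g_j)(x)|=|(\Psi_{2^{-k}})*g_j(x)|\le C_s\,M(|g_j|^s)(x)^{1/s}$ for any $s>0$. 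Choosing $s<\min(2,p)$ and applying Cauchy--Schwarz to the boundedly-many-term inner sum gives the pointwise bound $S(f)\le C\big(\sum_j M(|g_j|^s)^{2/s}\big)^{1/2}$; taking $\wlp$ quasinorms, factoring out the $s$-th power, and invoking the weak-type vector-valued Fefferman--Stein inequality (Proposition \ref{mf}) with exponents $p/s>1$ and $2/s>1$ produces $\|S(f)\|_{\wlp}\le C_{p,n,\Psi,q}\,A<\iy$. This comparison is the step that carries essentially all of the technical weight, though it is a routine variant of an argument already used inside the proof of Theorem \ref{sq}.

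Now Theorem \ref{sq} applies to $f$ and produces a unique polynomial $Q$ with $f-Q\in\hi$ and $\|f-Q\|_{\hi}\le C_0\|S(f)\|_{\wlp}\le C\,A$. By the preliminary remark $\sum_{|k|\le M}\De_k(f)\to f$ in $\mcs'$, and since in Theorem \ref{sq} the polynomial $Q$ is exactly the difference between $f$ and the $\mcs'$-limit of these partial sums, we conclude $Q=0$; hence $f\in\hi$ with $\|f\|_{\hi}\le C_{p,n,\Psi,q}\,A$. It remains to pass from $\hi$ to $\wlp$: since $f$ is a locally integrable function and $\Phi$ is a Schwartz function with $\int\Phi=1$, Lebesgue's differentiation theorem gives $\Phi_t*f(x)\to f(x)$ for a.e.\ $x$, so $|f(x)|\le M(f;\Phi)(x):=\sup_{t>0}|\Phi_t*f(x)|$ a.e., whence
$$\|f\|_{\wlp}\le\big\|M(f;\Phi)\big\|_{\wlp}\le C\,\|f\|_{\hi},$$
the last inequality being the equivalence of maximal characterizations in Theorem \ref{t6.4.13} and the remark following it (and simply $\wlp=\hi$, Theorem \ref{eq}, when $p>1$). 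Combining the last two estimates gives $\|f\|_{\wlp}\le C(p,n)\|\mathbf S^\Psi_q(f)\|_{\wlp}$, as claimed.

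Finally, a remark: the first two paragraphs can be replaced by a purely vector-valued argument. One shows directly that $\{g_j\}_j\in\hi(\rn,\ell^2)$ — the same Fefferman--Stein estimate (Proposition \ref{mf}) bounds $\sup_{t>0}\|\{\Phi_t*g_j\}\|_{\ell^2}$ in $\wlp$, hence so are the auxiliary and grand maximal functions by Theorem \ref{t6.4.13} — and then applies the vector-valued singular integral bound of Corollary \ref{SI} to the reproducing family $K_j=\eta_{2^{-qj}}$, where $\wh\eta\equiv1$ on $\mathrm{supp}\,\wh\Psi$ and is supported in a slightly wider annulus, to get $f=\sum_j K_j*g_j\in\hi$ with $\|f\|_{\hi}\le C\,A$; the passage from $\hi$ to $\wlp$ is then identical.
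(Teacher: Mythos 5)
Your proof is correct, but it is organized quite differently from the paper's, and along the way you have spotted a real issue with the statement as written. The paper first treats the special case where $\Psi$ is literally the bump of Theorem~\ref{sq} (so $q=1$ and $\mathbf S^\Psi_q$ \emph{is} the full square function of that theorem, which then applies verbatim), and then handles the general annulus and lacunary index $q>1$ by the vector-valued singular-integral bound Corollary~\ref{SI} applied to the reproducing family $\eta_{2^{-qj}}$ --- which is precisely the route you sketch in your final ``remark.'' Your main argument instead reduces any admissible $(\Psi,q)$ to the fixed square function of Theorem~\ref{sq} through the pointwise domination $S(f)\le C\big(\sum_j M(|g_j|^s)^{2/s}\big)^{1/2}$ followed by the weak-type Fefferman--Stein inequality of Proposition~\ref{mf}; this is a clean unification, and it uses the same Peetre-maximal-function technology the paper deploys inside the proof of Theorem~\ref{sq} itself. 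You also dispose of the polynomial $Q$ by a different mechanism: you note that for $f\in L^r$, $r<\iy$, the Littlewood--Paley partial sums converge to $f$ in $\mcs'$ (not merely modulo polynomials), which forces $Q=0$ from the construction of $Q$ in Theorem~\ref{sq}; the paper instead invokes the density-at-infinity estimates of Lemma~\ref{LD}. Both mechanisms are sound for $r<\iy$, and both proofs then pass from $\hi$ to $\wlp$ via the Lebesgue differentiation theorem in the same way.

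Your flag on the endpoint $r=\iy$ is a genuine catch, not a cosmetic remark. The function $f\equiv1$ lies in $L^\iy$; since $\wh\Psi$ vanishes in a neighborhood of the origin one has $\De_{q\ell}^\Psi(f)=0$ for every $\ell$, so $\mathbf S^\Psi_q(f)\equiv0\in\wlp$, while $\|1\|_{\wlp}=\iy$ for every $p<\iy$, contradicting the asserted inequality. The paper's own $Q$-killing step also breaks at this endpoint: Lemma~\ref{LD} carries no information for $f\in L^\iy$, and with $m=\max(r,p_2)=\iy$ the comparison function $x^{-1/m}\equiv1$ does not tend to zero, so the argument can at most conclude that $Q$ is a bounded constant --- which is exactly what the example $f\equiv1$, $Q=1$ realizes. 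For $r=\iy$ the additional hypothesis you propose (that $f$ coincides with its Littlewood--Paley reconstruction $\sum_j\De_{q j}^\Psi(f)$ in $\mcs'$, i.e.\ that $\wh f$ carries no mass at the origin) is genuinely necessary; as stated, allowing $r\in[1,\iy]$ is too permissive.
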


\begin{proof}
Let us prove the case that $\Psi$ satisfies the assumptions of Theorem \ref{sq} and therefore $q=1$.
Since $f\in  L^r$, it is an element of $\mathcal S'(\rn)$. The square function of $f$ lies in weak $\lp$, thus Theorem \ref{sq} yields the existence of a polynomial $Q$ such that $f-Q$ lies in $H^{p,\nf}$. 
By the Lebesgue differentiation theorem it follows that for almost all $x\in \rn$ we have
\begin{equation}\label{LWLKIY}
|f(x)-Q(x)|\le C  \sup_{t>0}|(\vp_t*(f-Q))(x)|\, , 
\end{equation}
where $\vp$ is a smooth compactly supported function with $\intrn \vp(x)\,dx =1$. Taking  $\wlp$ norms in both sides of \eqref{LWLKIY}, and using Theorem \ref{sq}  we obtain  that 
$$
\|f-Q\|_{\wlp} \le C' 
 \big\| \Big( \sum_{j\in \zzz}  |\De_j (f-Q )|^2\Big)^{\f12}\big\|_{L^{p,\nf}}
 = C'\big\| \Big( \sum_{j\in \zzz}  |\De_j (f  )|^2\Big)^{\f12}\big\|_{L^{p,\nf}}\, . 
 $$

If $f\in L^r$ and $g=f-Q\in\wlp$, then choose $p_2>p$ and denote $m=\max(r,p_2)$. By Lemma \ref{LD} we will have
\begin{align*}
&\lim_{M\rar\iy}\f{|\{1<x<M:\,\,|Q|>x^{-\f1m}\}|}{M}\\
\le &\lim_{M\rar\iy}\f{|\{1<x<M:\,\,2|g|>x^{-\f1{p_2}}\}|}{M}+\lim_{M\rar\iy}\f{|\{1<x<M:\,\,2|f|>x^{-\f1r}\}|}{M}
=0
\end{align*}
which implies that $Q=0$ since $x^{-\tf1m}\rar0$ as $x\rar\iy$.

For more general case, the support of $\wh{\Psi}(\xi)$ may intersect more supports of functions of the form $\wh{\Psi}(2^{-jq}\xi)$ and the number of 
intersection is finite since
the support of $\wh{\Psi}$ is a compact annulus that does not contain $0$. If we take $\vp$ as in Theorem \ref{sq}, then
$
\sup_{t>0}|\vp_t*\De_{jq}(f)|\le|\De_{jq}(f)|+\sup_{a\cdot2^{-jq}\ge t\ge b\cdot2^{-jq}}|\vp_t*\De_{jq}(f)|,
$
where $a$ and $b$ are constants depending on the support of $\wh{\Psi}$.
If we choose an appropriate $\eta$ satisfying that
 $\wh{\eta}(\xi)=1$ on the support of $\wh{\Psi}$, then
 there is no difficulty to apply Corollary \ref{SI} to show that 
 \begin{align*}
 \big\|\sum_{|j|\le M}\De_{jq}(f)\big\|_{\hi}=&\,\,\|\sum_{|j|\le M}\De^{\eta}_{jq}(f_j)\|_{\hi}\\
\le&\,C\|\sup_{t>0}(\sum_{|j|\le M}|\vp_t*f_j|^2)^{\f12}\|_{\wlp}\\
\le &\,C'\|(\sum_{j\in\zzz}(|\De_{jq}(f)|)^2)^{\f12}\|_{\wlp},
 \end{align*}
 which gives that 
$
\| f-Q\|_{H^{p,\nf}}\le C(p,n) \|\mathbf S^{\Psi}_q(f)
\|_{L^{p,\nf }} \, .
$
The rest discussion then follows easily as we did in the case that $\Psi$ satisfies assumptions of Theorem \ref{sq}. 

\end{proof}
The preceding corollary has applications in the theory of paraproducts. See \cite{GKM}. Moreover, the following corollary 
can be proved similarly to the previous corollary.
\begin{corollary}\lab{c6.4.16}
Fix $\Psi$ in $\cs(\rn)$ with Fourier transform
supported in  $ \f67 \le |\xi| \le 2$, equal  $1$ on the $1\le |\xi| \le \f{12}{7}$, and satisfy
$\sum_{j\in \zzz} \wh{\Psi}(2^{-j}\xi) = 1$ for $\xi\neq 0$. Fix $b_1,b_2$ with $b_1<b_2$ and define
a Schwartz function $\Om$ via
$\wh{\Om}(\xi) = \sum_{j=b_1}^{b_2} \wh{\Psi}( 2^{-j} \xi ).
$
Define
$\De_k^\Om(g)\,\sphat\,\, (\xi) = \wh{g}(\xi)\wh{\Om} (2^{-k}\xi)$, $k\in \zzz$.   Let $q=b_2-b_1+1$,  $0<p\le 1$, and
fix $r\in \{0,1,\dots , q-1\}$.
Then there exists a constant
$C=C_{n, p,b_1,b_2,\Psi}$ such that for all $f\in \hi(\rn)$ we have
\begin{equation}\lab{6.4.6.1st-dire-0}
\Big\|\Big(\sum_{j= r\!\!\!\!\! \mod q} |\De_j^{\Om}(f)|^2\Big)^{\!\f12} \Big\|_{\wlp}\le
C \big\|f\big\|_{\hi}\, .
\end{equation}
Conversely, suppose that a  tempered distribution $f$   satisfies
\begin{equation}\lab{6.4.6.2nd-dire099887-0}
\Big\|\Big(\sum_{j= r\!\!\!\!\! \mod q} |\De_j^{\Om}(f)|^2\Big)^{\!\f12} \Big\|_{\wlp}<\nf \, .
\end{equation}
Then there exists a unique polynomial $Q(x)$ such that $f-Q$
lies in the weak Hardy space
 $\hi$ and satisfies for some constant
$C=C_{n, p,b_1,b_2,\Psi}$
\begin{equation}\lab{6.4.6.2nd-dire-0}
\f{1}{C}\big\|f-Q\big\|_{\hi}\le
\Big\|\Big(\sum_{j= r\!\!\!\!\! \mod q} |\De_j^{\Om}(f)|^2\Big)^{\!\f12} \Big\|_{\wlp} \, .
\end{equation}
\end{corollary}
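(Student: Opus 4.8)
The plan is to deduce Corollary \ref{c6.4.16} from Theorem \ref{sq} (the $q=1$ square function characterization) together with Corollary \ref{SI} on vector-valued singular integrals, mimicking very closely the proof of Corollary \ref{last}. First I would observe that $\wh{\Om}(2^{-k}\xi)$ is a smooth bump supported in an annulus $2^{b_1}\cdot\f67\le|\xi|\le 2^{b_2+1}$ that does not contain the origin, and that $\De_j^\Om = \sum_{i=b_1}^{b_2}\De_{j+i}^\Psi$ where $\De^\Psi$ are the Littlewood--Paley pieces of Theorem \ref{sq}. Since only finitely many of the $\wh\Psi(2^{-\ell}\xi)$ overlap the support of a given $\wh\Om(2^{-k}\xi)$, the operators $\De_j^\Om$ over $j\equiv r \bmod q$ are, up to a bounded reindexing, essentially a reshuffling of the full family $\{\De_\ell^\Psi\}_{\ell\in\zzz}$; more precisely, as $j$ runs over the residue class $r \bmod q$ and $i$ over $\{b_1,\dots,b_2\}$, the indices $j+i$ cover each integer $\ell$ exactly once. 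Hence there is a pointwise comparison
$$
\Big(\sum_{j\equiv r \bmod q}|\De_j^\Om(f)|^2\Big)^{1/2}\le C_q\Big(\sum_{\ell\in\zzz}|\De_\ell^\Psi(f)|^2\Big)^{1/2}
$$
by the triangle inequality in $\ell^2$, and conversely $\big(\sum_\ell |\De_\ell^\Psi(f)|^2\big)^{1/2}$ is controlled by summing over the $q$ residue classes.

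For the direct estimate \eqref{6.4.6.1st-dire-0}: given $f\in\hi$, Theorem \ref{sq} gives $\big\|(\sum_\ell|\De_\ell^\Psi(f)|^2)^{1/2}\big\|_{\wlp}\le C\|f\|_{\hi}$, and the pointwise comparison above immediately yields \eqref{6.4.6.1st-dire-0}. For the converse, I would run the argument from the proof of Theorem \ref{sq} verbatim with $\De_j^\Om$ in place of $\De_j$. Set $f_j = \De_j^\Om(f)$ for $j\equiv r \bmod q$; choosing $\vp\in\mcs$ with $\wh\vp=1$ on a neighborhood of the support of $\wh\Om$ and vanishing outside a slightly larger annulus (or ball), one shows $\sup_{t>0}|\vp_t*f_j|\le |f_j| + C\,M(|f_j|^\rho)^{1/\rho}$ with $\rho<\min(2,p)$ by Lemma 6.5.3 of \cite{G2}, and then Proposition \ref{mf} gives
$$
\Big\|\sup_{t>0}\Big(\sum_{j\equiv r \bmod q}|\vp_t*f_j|^2\Big)^{1/2}\Big\|_{\wlp}\le C'\Big\|\Big(\sum_{j\equiv r\bmod q}|\De_j^\Om(f)|^2\Big)^{1/2}\Big\|_{\wlp}.
$$
Next, pick $\eta$ with $\wh\eta=1$ on the support of $\wh\Om$ and $\wh\eta$ supported in an annulus away from $0$, so that $\De_j^\eta\De_j^\Om=\De_j^\Om$ and the kernels $\sum_{|j|\le M,\, j\equiv r}\Psi^{(\eta)}_{2^{-j}}$ satisfy the hypotheses of Corollary \ref{SI} uniformly in $M$; this yields that $\big\|\sum_{|j|\le M,\,j\equiv r}\De_j^\Om(f)\big\|_{\hi}$ is bounded uniformly in $M$ by the square-function quasinorm. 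The weak-$*$ compactness lemma from the proof of Theorem \ref{sq} then produces a subsequence of partial sums converging in $\mcs'$ to some $g\in\hi$ with $\|g\|_{\hi}\le C\big\|(\sum_{j\equiv r}|\De_j^\Om(f)|^2)^{1/2}\big\|_{\wlp}$. Finally, since $\sum_{j\equiv r \bmod q}\wh{\Om}(2^{-j}\xi)$ equals $1$ away from the origin (the residues $r=0,\dots,q-1$ partition the sum $\sum_{j}\wh\Psi(2^{-j}\xi)=1$ into $q$ blocks, each of which sums to the full identity after the $\Om$-regrouping — this needs a small check), $\sum_{j\equiv r}\De_j^\Om - I$ is supported at the origin, so $f-g$ is a polynomial $Q$, giving \eqref{6.4.6.2nd-dire-0}, and $Q$ is unique because a nonzero polynomial is not in $\hi$ (as in Corollary \ref{last}, using the $L^r$ hypothesis or Lemma \ref{LD} if needed).

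The main obstacle I anticipate is the bookkeeping in the claim that the sub-family $\{\De_j^\Om : j\equiv r \bmod q\}$ behaves, for the purposes of both the square-function comparison and the reconstruction $\sum_{j\equiv r}\De_j^\Om = I$ modulo polynomials, exactly like a full Littlewood--Paley family: one must verify that with $q=b_2-b_1+1$ and $\wh\Om(\xi)=\sum_{i=b_1}^{b_2}\wh\Psi(2^{-i}\xi)$, the map $(j,i)\mapsto j+i$ with $j\equiv r \bmod q$, $b_1\le i\le b_2$ is a bijection onto $\zzz$, hence $\sum_{j\equiv r \bmod q}\wh\Om(2^{-j}\xi)=\sum_{\ell\in\zzz}\wh\Psi(2^{-\ell}\xi)=1$ for $\xi\ne0$. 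Once this combinatorial identity is in hand, every analytic step is a direct transcription of the proofs of Theorem \ref{sq} and Corollary \ref{last}, with the Fourier supports tracked through the extra dilation by the block length $q$; the constants will depend on $b_1,b_2$ precisely through this finite overlap count.
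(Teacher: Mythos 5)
Your proposal is correct and takes essentially the same route as the paper: the forward inequality is the $\ell^2$ triangle inequality applied to $\De_j^\Om=\sum_{i=b_1}^{b_2}\De_{j+i}^\Psi$ combined with Theorem \ref{sq}, and the converse is the Corollary \ref{SI}/weak-$*$ compactness machinery from the proofs of Theorem \ref{sq} and Corollary \ref{last}, which the paper simply cites rather than re-runs. The combinatorial bijection $(j,i)\mapsto j+i$ that you correctly flag and verify is exactly what licenses the identity $\sum_{j\equiv r\bmod q}\wh\Om(2^{-j}\xi)=1$ and thus the reduction to a full Littlewood--Paley family.
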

\begin{proof}
We have proved the direction that $\f{1}{C}\|f-Q\|_{\hi}\le(\sum_{j= r\!\!\mod q} |\De_j^{\Om}(f)|^2)^{\!\f12} \|_{\wlp} \, .$ 
$(\sum_{j= r\! \mod q} |\De_j^{\Om}(f)|^2)^{1/2}\le\sum_{k=1}^q(\sum_j|\De_{qj+k}^{\Psi}(f)|^2)^{1/2}\le q(\sum_j|\De_j^{\Psi}(f)|^2)^{1/2}$ comes from 
the fact that
$\wh{\Om}(\xi) = \sum_{j=b_1}^{b_2} \wh{\Psi}( 2^{-j} \xi )$, which proves the other direction.
\end{proof}

\smallskip
 
I want to express my deepest gratitude  to Professor  L. Grafakos, who  
gave me a lot of valuable suggestions. Without him
I could not have  finished this article.


\end{document}